\title{Projection Operators in the Weihrauch lattice}
\author{Guido Gherardi}
\address{Dipartimento di Filosofia,
    Universit\`{a} di Bologna}
\email{guido.gherardi@unibo.it}
\author{Alberto Marcone}
\address{Dipartimento di Scienze Matematiche, Informatiche e Fisiche,
    Universit\`{a} di Udine}
\email{alberto.marcone@uniud.it}
\urladdr{http://users.dimi.uniud.it/~alberto.marcone/}
\thanks{Marcone's research partially supported by PRIN 2012 Grant \lq\lq Logica, Modelli
e Insiemi\rq\rq\ and by the departmental PRID funding \lq\lq HiWei --- The
higher levels of the Weihrauch hierarchy\rq\rq.}
\author{Arno Pauly}
\address{Department of Computer Science, Swansea University}
\email{a.m.pauly@swansea.ac.uk}
\date{\today}
\newcommand{\set}[2]{\left\{\,{#1} \,:\, {#2}\,\right\}}
\newcommand{\Can}{\ensuremath{{2^\IN}}}
\newcommand{\Bai}{\ensuremath{{\IN^\IN}}}
\newcommand{\Sie}{\mathbb S}
\newcommand{\mto}{\rightrightarrows}
\newcommand{\var}{\varepsilon}
\def\IN{\mathbb{N}}
\def\IQ{\mathbb{Q}}
\def\IR{\mathbb{R}}
\def\AA{\mathcal{A}}
\def\KK{\mathcal{K}}
\def\FF{\mathcal{F}}
\def\toto{\rightrightarrows}
\def\sbsq{\subseteq}
\def\eps{\emptyset}
\def\otto{\leftrightarrow}
\def\To{\Rightarrow}
\def\Otto{\Longleftrightarrow}
\def\fa{\forall}
\def\ex{\exists}
\def\dom{\mathrm{dom}}
\def\range{\mathrm{range}}
\def\id{\mathrm{id}}
\def\lim{\mathrm{lim}}
\newcommand{\Proj}{\operatorname{Proj}}
\newcommand{\ProjC}{\Proj}
\newcommand{\ProjK}{\Proj\K}
\newcommand{\ProjvarC}{\var\text{-}\Proj}
\newcommand{\ProjvarK}{\var\text{-}\Proj\K}
\newcommand{\C}{\operatorname{C}}
\newcommand{\K}{\operatorname{K}}
\newcommand{\LPO}{\operatorname{LPO}}
\newcommand{\LLPO}{\operatorname{LLPO}}
\newcommand{\WKL}{\operatorname{WKL}}
\newcommand{\BWT}{\operatorname{BWT}}
\newcommand{\UBWT}{\operatorname{UBWT}}
\newcommand{\Sort}{\operatorname{Sort}}
\def\leqW{\mathop{\leq_{\mathrm{W}}}}
\def\equivW{\mathop{\equiv_{\mathrm{W}}}}
\def\leqsW{\mathop{\leq_{\mathrm{sW}}}}
\def\equivsW{\mathop{\equiv_{\mathrm{sW}}}}
\def\nleqW{\mathop{\not\leq_{\mathrm{W}}}}
\def\lW{\mathop{<_{\mathrm{W}}}}
\def\nW{\mathop{|_{\mathrm{W}}}}
\newtheorem{lemma}{Lemma}[section]
\newtheorem{theorem}[lemma]{Theorem}
\newtheorem{corollary}[lemma]{Corollary}
\newtheorem{fact}[lemma]{Fact}
\newtheorem{proposition}[lemma]{Proposition}
\theoremstyle{definition}
\newtheorem{definition}[lemma]{Definition}
\theoremstyle{remark}
\newtheorem{remark}[lemma]{Remark}
\begin{document}

\begin{abstract}
In this paper we study, for $n \geq 1$, the projection operators over
$\IR^n$, that is the multi-valued functions that associate to $x \in \IR^n$
and $A \subseteq \IR^n$ closed, the points of $A$ which are closest to $x$.
We also deal with approximate projections, where we content ourselves with
points of $A$ which are almost the closest to $x$. We use the tools of
Weihrauch reducibility to classify these operators depending on the
representation of $A$ and the dimension $n$. It turns out that, depending
on the representation of the closed sets and the dimension of the space,
the projection and approximate projection operators characterize some of
the most fundamental computational classes in the Weihrauch lattice.
\end{abstract}

\maketitle

\tableofcontents


\section{Introduction}\label{sec:introduction}

Projecting a point over a non-empty subset of a Euclidean space is an
operation deeply grounded in our geometrical intuition of the spatial
continuum and has many important applications in higher mathematics. More
precisely, given $x \in \IR^n$ and $A \subseteq \IR^n$ we seek $y \in A$ such
that $d(x,y) = d(x,A)$ (when $A$ is closed, such a $y$ does exist, although
it might not be unique). In this paper we show that the intuitive, even
empirical, naturalness of this problem leads to multi-valued function
realizing some well-known levels of incomputability.

We work in the Weihrauch lattice, which has become a widespread tool to
classify the level of incomputability of mathematical problems from several
branches of classical mathematics since \cite{HB}. Intuitively, given two
(multi-valued) functions $f$ and $g$ on represented spaces, $f$ is
\emph{Weihrauch reducible} to $g$ if $f$ can be computed by $g$, with
computable translations from $\dom(f)$ to $\dom(g)$, and, viceversa, from
$\range(g)$ to $\range(f)$, allowed. (More details are in \S\ref{ssec:Wred}
below.)

Recall that in this approach mathematical objects are encoded by sequences of
infinite length whose information is based on the topological properties of
the underlying spaces. For example, $x \in \IR^n$ (for a fixed $n\geq 1$) is
naturally represented by an effective Cauchy sequence in $\IQ^n$ converging
to $x$. But if we want to project $x$ onto a subset of $\IR^n$ we of course
also need a suitable encoding for the set. We focus our investigation on
closed sets (although we will also mention the special case of compact sets)
and their standard topologies. The so-called \emph{negative} representation
for closed sets is based on the lower Fell topology $\AA_-(\IR^n)$ and
consists in enumerating an open cover of the complement. In contrast, their
\emph{positive} representation is based on the upper Fell topology
$\AA_+(\IR^n)$ and consists, for nonempty closed sets, in enumerating dense
sequences of points in them. Finally, the \emph{total} representation,
corresponding to the Fell topology, is obtained by combining both kinds of
information (\cite{Sch}). More details about these representations will be
given in \S\ref{ssec:repr} below. We thus obtain different projection
operators, depending on the representation chosen for the closed
set.\smallskip

In the literature (\cite{BD10, Neumann}) it has been proved that the
projection operators, for some metric spaces and closed sets
with optimal conditions (such as convexity, boundedness of $A$, uniqueness of
the solution) are computable. But what happens when such optimal
conditions fail? It is not surprising that the problem is then no longer
computably solvable for any of the above mentioned representations on closed
sets, so the goal becomes the classification, depending on the type of
information involved, of the corresponding degrees of incomputability in the
Weihrauch lattice.\smallskip

In many concrete applications one may be content already with approximations
of arbitrarily accurate precision. In other words, we investigate the
computational complexity of operators selecting points $y\in A$ such that
$d(x,y) \leq (1+\varepsilon) d(x,A)$ for some fixed $\varepsilon>0$.
Intuitively, we expect that the loss of accuracy results in a simpler
computational complexity. We indeed prove that these operators are simpler
than their exact counterparts, but still not computable for negative and
positive representations of closed sets. In contrast, the approximate
projection operators with total information are computable.\smallskip

\begin{table}
\begin{tabular}{|c|c|c|l|c|}
\hline
\textbf{Proj.} & \textbf{Repr.} & \textbf{Dim.} & \textbf{Weihrauch degree} & \textbf{Reference}\\
\hline
\hline
\multirow{6}*{Exact}
& \multirow{2}*{negative}
& $n=1$ & $\equivW \BWT_2\times\lim$ & \ref{BWT2Xlimneg}\\
\cline{3-5}
& & $n \geq 2$ & $\equivW \BWT_\IR$ & \ref{equiv-BWT}\\
\cline{2-5}
& \multirow{2}*{positive}
& $n=1$ & $\equivW \BWT_2\times\lim$ & \ref{BWT2Xlimpos}\\
\cline{3-5}
& & $n\geq 2$ & $\equivW \BWT_\IR$ & \ref{equiv+BWT}\\
\cline{2-5}
& \multirow{2}*{total} & $n=1$ & $\equivW \LLPO$ & \ref{ProjCequivLLPO}\\
\cline{3-5}
& & $n\geq 2$ & $\equivW \WKL$ & \ref{ProjCequivWKL}\\
\hline
\multirow{3}*{Approx.} & negative & $n\geq 1$ & $\equivW \C_\IR$ & \ref{approxequivCR}\\
\cline{2-5}
& positive & $n\geq 1$ & $\equivW \Sort$ & \ref{approxequivsort} \\
\cline{2-5}
& total & $n\geq 1$ & computable & \ref{totalapprox} \\
\hline
\end{tabular}\medskip

\caption{Summary of main results: the first column indicates the kind of
projection, the second column the representation of the closed sets, the
third one the dimension of the Euclidean space $\IR^n$, the fourth one our
results, and the last one the references to the results in this paper where
the results are proved.\label{table}}
\end{table}
Table \ref{table} summarizes our main results. Quite surprisingly, it turns
out that in most cases the (approximate) projection operators are Weihrauch
complete with respect to some fundamental computational class which is
represented in the last column by its emblematic representative, already
studied in the literature, and defined in \S\ref{ssec:milestones} below. In
other words, the notion of projection allows us, by varying the kind of
projection, the representation of the closed sets, and the dimension of the
space, to characterize some of the most fundamental degrees in the Weihrauch
lattice. In all cases, we obtain a characterization in terms of previously
studied Weihrauch degrees.

It is also remarkable that, as far as exact projections are concerned,
negative or positive information for closed sets can be used interchangeably,
as this has no effect in the classification obtained with respect to any
given dimension $n\geq 1$. The difference between negative and positive
information only arises when approximations are allowed. Here, the relevant
degrees are even incomparable (Corollary \ref{corr:incomparable}).
\smallskip

To see that the approximate projections are of practical importance we
suggest a concrete application, which is actually the original motivation for
our research. The Whitney Extension Theorem was originally proved in
\cite{Whitney} and, roughly speaking, generalizes the well-known
Urysohn-Tietze Extension Lemma to the case of differentiable functions. An
expository paper on modern developments concerning the Whitney Extension Theorem
and its generalizations is \cite{Feff}. By using approximate projections in
place of the exact ones originally used in the proof of the Whitney Extension
Theorem as exposed in the classical textbook \cite{Stein}\footnote{Stein
himself suggests on p.172 the possibility of using approximate projections,
although our use probably is not the one he was foreseeing.}, we sketch the
computability of this theorem. Full details of this result are postponed to a
forthcoming paper (\cite{compWhitney}).\medskip

We now explain the organization of the paper. In Section \ref{sec:notation}
we give a brief introduction to computable analysis, introduce the
representations we will be using throughout the paper, and recall Weihrauch
reducibility and some milestones in the Weihrauch lattice. Section
\ref{sec:Sort} provides a new characterization of the Weihrauch degree of the
function $\Sort$, introduced by Neumann and Pauly (\cite{NP}). Sections
\ref{sec:exact} and \ref{sec:approx} are the core of the paper and are
devoted respectively to the exact and approximated projection operators:
after defining them we prove the results summarized in Table \ref{table}. In
Section \ref{sec:Whitney} we briefly sketch the application of approximate
projections to the Whitney Extension Theorem.

\section{Computable analysis: notation and terminology}\label{sec:notation}

This Section recalls basic definitions and terminology of computable analysis
and of Weihrauch reducibilities (see \cite{Weihrauch} for a self-contained
introduction to the subject). The reader familiar with the topics can safely
skip it and refer back to this section as needed.\smallskip

We work in the framework is the so called Type-2 Theory of Effectivity (TTE),
which finds a systematic foundation in \cite{Wei00} and provides a realistic
and flexible model of computation. The salient features of TTE Turing
machines are that they work on infinite sequences of bits and that no
correction is allowed on the output. A partial function $F: \sbsq \Bai \to
\Bai$ is computable if it is in computed by some TTE Turing machine. An
immediate consequence of the restraint concerning the output is that all
computable functions are continuous.

\subsection{Representations}\label{ssec:repr}
To extend the notion of computability to functions between spaces different
from \Bai\ we need the notion of representation. Recall that a
\emph{representation} $\sigma_X$ of a set $X$ is a surjective function
$\sigma_X: \sbsq \Bai \to X$, and in this case we say that the pair
$(X,\sigma_X)$ is a \emph{represented space}. If $x \in X$ a
\emph{$\sigma_X$-name} for $x$ is any $p \in \Bai$ such that $\sigma_X(p) =
x$. By routine syntactic pairing techniques it is straightforward to obtain
representations for finite and countably infinite product of represented
spaces.

Given represented spaces $(X, \sigma_X)$ and $(Y, \sigma_Y)$ and a partial
multi-valued function $f: \sbsq X \mto Y$, we say that $F:\sbsq \Bai \to
\Bai$ is a \emph{$(\sigma_X, \sigma_Y)$-realizer} of $f$ (and write $F \vdash
f$) if $\sigma_Y(F(p)) \in f(\sigma_X(p))$, for all $p \in \dom(f \circ
\sigma_X)$. We can now say that a function between represented spaces is
\emph{computable} if it has a computable realizer.

For representations $\sigma_X$ and $\sigma'_X$ of the same set $X$, we say
that $\sigma_X$ is \emph{computably reducible} to $\sigma'_X$ (we write
$\sigma_X \leq_c \sigma_X'$) if there is a computable $F:\sbsq \Bai \to \Bai$
such that for every $p \in \dom(\sigma_X)$ we have $\sigma_X(p)=
\sigma'_X(F(p))$. If $\sigma_X \leq_c \sigma'_X$ and $\sigma'_X \leq_c
\sigma_X$, the two representations are \emph{computably equivalent}
($\sigma_X \equiv_c \sigma_X$).

The general notion of representation is too broad for practical purposes.
Concretely, representations are associated to the final topologies they
induce on the represented space, and usually \emph{admissible
representations} for $T_0$-spaces are considered. Such representations are
those that make the use of realizers meaningful: if $X$ and $Y$ are
admissibly represented, a single valued $f:\sbsq X\to Y$ is continuous if and
only if it admits a continuous realizer $F:\sbsq\Bai\to\Bai$ with respect to
the Baire topology (see \cite{Wei00} and \cite{Paadm} for introductions to
the theory of admissible representations).

An important example is the Cauchy representation which is admissible with
respect to the topology of a separable (computable) metric space.

\begin{definition}[Computable metric spaces]\label{metric}
A \emph{computable metric space} is a triple $(X,d,\alpha)$, where $d$ is a
metric on $X$, $\alpha: \IN \to X$ is a dense sequence in $X$, and $d \circ
(\alpha \times \alpha)$ is a computable double sequence in $\IR$. We then
represent $X$ by the \emph{Cauchy representation} $\delta_X: \sbsq \Bai \to
X$, defined by
\begin{align*}
p\in\dom(\delta_X) & \Otto (\fa i)(\fa j\geq i) \: d(\alpha(p(i)),\alpha(p(j)))\leq 2^{-i}; \\
\delta_X(p)=x & \Otto \lim_{n\to\infty}\alpha(p(n))=x.
\end{align*}
When $\delta_X(p)=x$ we say that $(\alpha(p(i)))_i$ is an \emph{effective
Cauchy sequence}, and that it \emph{converges effectively} to $x$.
\end{definition}

Notice that with this representation, the metric $d: X \times X \to \IR$ is
computable.

A particularly important example is provided by the Euclidean spaces $\IR^n$,
which are computable metric spaces when we fix a function
$\alpha:\IN\to\IQ^n$ enumerating in an effective way $\IQ^n$. Here
$d:\IR^n\times\IR^n\to\IR$ is the usual Euclidean metric.

By using the same effective numbering $\alpha:\IN\to\IQ$, there are other
ways to represent real numbers, by changing the underlying topology over
$\IR$. The representation $\rho_>$ is given by $\rho_>(p):=x$ iff
$n\in\range(p)\Otto \alpha(p(n))<x$, and analogously $\rho_>$ is given by
$\rho_>(p):=x$ iff $n\in\range(p)\Otto \alpha(p(n))>x$. These two
representations are admissible with respect to the topologies $\IR_<$ and
$\IR_>$ whose open sets are of the form $]x,\infty[$ and $]-\infty,x[$
respectively (see \cite{Wei00} for more details).

Given a computable metric space $(X,\delta_X)$ we can effectively enumerate
the open balls with center in $\range(\alpha)$ and rational radius in an
obvious way using a computable pairing function: to $k= \langle n,m\rangle
\in \IN$ we associate the open ball $B_k := B(\alpha(n),q_m)$, where
$(q_m)_m$ is a standard enumeration of the nonnegative rational numbers
(notice that $B(\alpha(n),q_m)=\eps$ when $q_m=0$). We call these sets
\emph{open basic balls}. We denote the closed ball $\set{x\in
X}{d(\alpha(n),x)\leq q_m}$ by $\overline B(\alpha(n),q_m)$ or $\overline
B_k$ (notice that in general this is not the same as the closure
$\overline{B(\alpha(n),q_m)}=\overline{B_k}$ of $B_k$, although in $\IR^n$
they coincide).

\begin{definition}[Closed set representations]\label{closed}
Let $(X,\delta_X)$ be a computable metric space.

By $\AA_-(X)$ we denote the hyperspace of closed subsets of $X$ equipped with
the \emph{negative information representation} $\psi_X^-: \Bai\to\set{A\sbsq
X}{A \text{ is closed in } X}$ such that
$$\psi_X^-(p)=A \Otto A=X\setminus\bigcup_{i \in \range(p)} B_{k_i}.$$\smallskip

By $\AA_+(X)$ we denote the hyperspace of closed subsets of $X$ equipped with
the \emph{positive information representation} $\psi_X^+: \sbsq\Bai \to
\set{A\sbsq X}{A \text{ is closed in } X}$ such that
$$\psi^+(p)=A\Otto (\fa i) \Big(i\in\range(p)\otto B_{k_i} \cap A \neq \eps
\Big).$$\smallskip

Finally, by $\AA(X)$ we denote the hyperspace of closed subsets of $X$
equipped with the \emph{total information representation} $\psi_X = \psi_X^-
\wedge \psi_X^+$, that is
$$\psi_X(\langle p_0,p_1\rangle) = A \Otto \psi_X^-(p_0) = \psi_X^+(p_1) = A.$$
\end{definition}

It is clear from Definitions \ref{metric} and \ref{closed} that we can view
$A$ as an element of $\AA_-(X)$ if and only if we can semi-decide whether
$x\notin A$ for every $x \in X$. This means that to show that (a name for)
some $A \in \AA_-(X)$ can be computed from some input $z$ it suffices to give
a definition of $A$ by a $\Pi^0_1$ formula with parameter $z$.

It is well known that the operations $\cap, \cup: \AA_-(X) \times \AA_-(X)
\to \AA_-(X)$ are computable, as well as $\cup: \AA(X)_+ \times \AA(X)_+ \to
\AA_+(X)$.

Closed sets with positive information are also known in the literature as
\emph{overt sets} (see \cite{Paadm} for a discussion of nomenclature).

\begin{remark}\label{rem:psi+}
By \cite[Theorems 3.7 and 3.8]{BP03}, in every complete computable metric
space $(X,\delta_X)$, the positive information representation for
\emph{nonempty} closed sets is equivalent to the representation which assigns
to a name $p:=\langle p_0,p_1,p_2,\dots\rangle \in \dom(\delta_X)^\IN$ the
set $\overline{\set{\delta_X(p_n)}{n \in \IN}}$. See also \cite[Lemma
5.1.10]{Wei00} for the case $X=\IR^n$.

As for the negative information, in the Euclidean space this is equivalent to
the representation encoding a closed $A\sbsq\IR^n$ by enumerating all $k$
such that $A \cap \overline B_k=\eps$ (\cite[Lemma 5.1.10]{Wei00}).
\end{remark}

We are also interested in representing the space of the compact subsets of a
fixed computable metric space.

\begin{definition}[Compact set representations]
Let $(X,\delta_X)$ be a computable metric space. By $\KK_-(X)$ we denote the
hyperspace of compact subsets of $X$ equipped with the \emph{negative
information representation} $\kappa_X^-:\sbsq\Bai\to\set{K\sbsq X}{K \text{
is compact in } X}$ such that:
$$
\kappa_X^-(\langle k_0,\dots, k_{j-1} \rangle p)=K \Otto
\psi_X^-(p)=K \land K \sbsq\bigcup_{i<j}\overline B_{k_i} \land (\fa i<j)\:B_{k_i}\neq X.
$$
Analogously, one defines the hyperspace $\KK_+(X)$ of compact subsets of $X$
equipped with the \emph{positive information representation} $\kappa_X^+$,
and the hyperspace $\KK(X)$ of compact subsets of $X$ equipped with the
\emph{total information representation} $\kappa_X$, by replacing $\psi_X^-$
with $\psi_X^+$ and $\psi_X$, respectively.
\end{definition}

\begin{remark}\label{compeucl}
In the case of the Euclidean space $\IR^n$, the balls
$B_{k_0},\dots,B_{k_{j-1}}$ can be more simply replaced by a single ball
$B(0,N)$, for $N\in\IN$, satisfying $K\sbsq \overline B(0,N)$ (in agreement
with \cite[Definition 5.2.1]{Wei00}).
\end{remark}

Abstracting from the purely syntactic elements, representations often denote
objects by enumerating sequences of objects. Therefore one is often allowed
to skip the annoying linguistic aspects by describing the represented element
directly through the corresponding sequence of objects. For instance, we see
a point $x$ in a metric space directly as $x = \lim_{i\to\infty} x[i]$,
where, for all $i$, $x[i]=\alpha(p(i))\in X$ with respect to some given
Cauchy-name $p$ of $x$. Analogously, we can describe a closed set
$A\in\AA_-(X)$ directly as $A:=X\setminus\bigcup_{i\in\IN} B_i$, by meaning
that $B_i$ (which really should be $B_{k_i}$) is the $i$-th rational open
ball enumerated by some $\psi^-_X$-name of $A$.

\subsection{Weihrauch reducibility}\label{ssec:Wred}
The original definition of Weihrauch reducibility between functions over
represented spaces is due to Weihrauch in an unpublished report from 1992,
and in the next decade the notion was explored in several thesis by some of
Weihrauch's students. The authors \cite{HB} extended Weihrauch reducibility
to multi-valued functions. Let $f:\sbsq X \mto Y$ and $g:\sbsq Z \mto W$ be
partial multi-valued functions between represented spaces. We say that
\emph{$f$ is Weihrauch reducible to $g$}, and write $f \leqW g$, if there are
computable $H:\sbsq\Bai\times\Bai\to\IN$ and $K:\sbsq\Bai\to\Bai$ such that
$H(\id,GK)\vdash f$ whenever $G\vdash g$ (here $\id:\Bai\to\Bai$ is the
identity function on Baire space).

The intuition behind the definition is that $f \leqW g$ means that the
problem of computing $f$ can be computably and uniformly solved by using in
each instance a single computation of $g$: $K$ modifies (each name for) the
input of $f$ to feed it to $g$, while $H$, using also the original input,
transforms (any name for) the output of $g$ into (a name for) the correct
output of $f$. Another characterization of Weihrauch reducibility is provided
by the fact that $f \leqW g$ if and only if there is a Turing machine that
computes $f$ using $g$ as an oracle exactly once during its infinite
computation \cite{TW11}.

A direct consequence of the definition of Weihrauch reducibility is the
following \emph{Invariance Principle}: $f\leqW g$ implies that for any given
$\sigma_X$-name $p$ of some $x\in\dom(f)$ there is some $y\in f(x)$ with a
$\sigma_Y$-name $q$ such that $q\leq_T p\oplus GK(p)$ (here $\leq_T$ denotes
the usual Turing reducibility). In other words, $GK(p)$ provides an upper
bound for the computational complexity of (some element in) $f(x)$.

The relation $\leqW$ is reflexive and transitive and induces an equivalence
relation denoted by $\equivW$. The partial order on the sets of
$\equivW$-equivalence classes (called \emph{Weihrauch degrees}) is a
distributive bounded lattice \cite{BG11a,Pa10} with several natural and
useful algebraic operations \cite{BW}. As usual, we use $f \lW g$ to denote
$f \leqW g$ and $g \nleqW f$, and $f \nW g$ to denote $f \nleqW g$ and $g
\nleqW f$.

The Weihrauch lattice can be used as a tool for comparing multi-valued
functions arising from theorems from different areas of mathematics, once the
theorems are translated into mathematical problems on represented spaces.
This line of research has blossomed in the last few years \cite{HB, BG11,
BG11a, BdBP, Pa10, Pa10a, BW, BGH15, BGH15a, DDHMS, LasVegas, ADSS} and this
paper contributes to it by classifying the projection operators.

In some cases, one can prove the reducibility of $f$ to $g$ by using a
computable $K$ that does not access to the original input, that is, we have
$KGH \vdash f$ whenever $G \vdash g$. In this case we say that \emph{$f$ is
strongly Weihrauch reducible to $g$} and write $f \leqsW g$. We then use
$\equivsW$ for the induced equivalence relation.

We notice that $f\leqsW g\Longrightarrow f\leqW g$ always hold, whereas
$f\leqW g\Longrightarrow f\leqsW g$ holds when $g$ is a \emph{cylinder}, that
is $g\equivsW g\times\id$, where id is the identity function on the Baire
space.

\subsection{Some milestones in the Weihrauch lattice}\label{ssec:milestones}
Multi-valued functions $f:\sbsq X\toto Y$ can be seen as problems: given
$x\in\dom(f)$, find a $y\in f(x)$. The algebraic structure of the Weihrauch
lattice can provide a useful tool to determine the computational complexity
of fundamental mathematical problems which are not computable, at least in
the standard TTE-model. A typical example is to find the derivative $f'$ of a
differentiable real function. Other paradigmatic problems are those provided
by fundamental theorems of classical mathematics. The idea here is to see a
statement of the form $(\fa x\in X)(\varphi(x)\to(\ex y\in Y)\:\psi(x,y))$ as
defining the multi-valued function $f:\sbsq X\toto Y$ with domain $\set{x \in
X}{\varphi(x)}$ and $f(x) = \set{y\in Y}{\psi(x,y)}$. It turns out that the
Weihrauch degree of many mathematical problems can be evaluated with the help
of few operators. Relevant for our work are the following:
\begin{itemize}

\item $\LPO:\Bai\to\{0,1\}$, the computational version of the \emph{limited
    principle of omniscience} of constructive mathematics defined by
    $\LPO(p)=0$ if $p=0^\IN$ and $\LPO(p)=1$ if  $p\neq0^\IN$;
\item $\LLPO:\sbsq\Bai\times\Bai\toto\{0,1\}$, the computational version of
    the \emph{lesser limited principle of omniscience} of constructive
    mathematics defined by $i \in \LLPO(p_0,p_1)$ iff $p_i=0^\IN$, where
    for at most one $j \in \{0,1\}$ and one $n\in\IN$ it holds $p_j(n) \neq
    0$;
\item $\WKL:\sbsq \mathsf{Tr} \toto \Can, T\mapsto[T]$, the \emph{Weak
    K\"onig's Lemma operator}, mapping each infinite binary tree to its
    infinite paths; here a tree $T \sbsq 2^{<\IN}$ is represented by its
    characteristic function $t \in \Can$, that is, $t(n)=1$ iff $w_n \in T$
    for a recursive enumeration $w_0, w_1, w_2, \dots$ of all finite binary
    words;
\item $\C_X:\sbsq\AA_-(X)\toto X,A\mapsto A$, the \emph{closed choice
    operators}, selecting members from any given non-empty closed set
    (encoded by negative information) in a computable metric space $X$;
\item $\K_X:\sbsq\KK_-(X)\toto X,K\mapsto K$, the \emph{compact choice
    operators}, selecting members from any given non-empty compact set
    (encoded by negative information) in a computable metric space $X$;
\item $\lim:\sbsq(\Bai)^\IN\to\Bai,(p_n)_n\to\lim_{n\to\infty}p_n$, for
    every convergent sequence $(p_n)_n$ in the Baire space;
\item $\BWT_X:\sbsq X^\IN\toto X$, the \emph{Bolzano-Weierstra\ss \:
    Theorem} operators, that maps every sequence with compact range in a
    computable metric space $X$ to its accumulation points.
\end{itemize}

For instance, the problem of finding the derivative $f'$ of $f$ is Weihrauch
equivalent to $\lim$. As for $\C_X$, $\K_X$, and $\BWT_X$, we obtain very
important cases when we set $X=\IN$ or $X=\IR$. For example, the
(contrapositive of) the Baire Category Theorem is Weihrauch equivalent to
$\C_\IN$. See \cite{Weihrauch} for a general overview of this program of
classification of mathematical problems and for further references.

It is well known that $\LLPO\equivsW\C_2$ (where $2=\{0,1\}$) and
$\WKL\equivsW\K_\IR$.

A multi-valued function $f$ is called \emph{non deterministically computable}
if $f\leqW\WKL$, \emph{computable with finitely many mind changes} if
$f\leqW\C_\IN$, and \emph{limit computable} if $f\leq\lim$. This terminology
arises from the non standard models of computation that make such $f$
computable. For instance, $f$ is computable with finitely many mind changes
if it can be computed by a non standard TTE-machine that is allowed to revise
the output with the restraint that only finitely many corrections can occur.
See \cite{Weihrauch} for more details and further references.

Some degrees can be seen as the \emph{parallelization} or \emph{composition}
of other degrees. The parallelization of $\widehat f:\sbsq X^\IN\toto Y^\IN$
of $f:\sbsq X\to Y$ is defined as $\widehat f((x)_n):=(f(x)_n)$. We have for
example $\WKL\equivsW\widehat{\LLPO}$ and $\lim\equivsW\widehat\LPO$. It is
known that parallelization is a closure operator, that is $f\leqW\widehat f$,
$f\leqW g\To\widehat f\leqW\widehat g$, and $\widehat{\widehat
f}\equivW\widehat f$.

The composition of multi-valued functions is defined so that the range of the
first function not necessarily has to be included in the domain of the second
function. Intuitively, some computational transformation is allowed so that
the two spaces can match. It is easier to define such \emph{compositional
product} as an operation on degrees\footnote{The introduction of the
compositional product as a specific function is more involved (see
\cite{algebraicWeihrauch}, \cite[Definition 5.3]{Weihrauch}) and not relevant
for this paper.} by
$$
g*f:=\max\set{g_0\circ f_0}{g_0\leqW g, f_0\leqW f}.
$$
Here the leftmost occurrences of $f$ and $g$ must be understood as denoting
the corresponding degrees, and the maximum as a degree defined by the partial
order induced on the Weihrauch degrees by $\leqW$. (Notice that the Weihrauch
lattice is not complete, but the $\max$ above always exists by
\cite[Corollary 18]{algebraicWeihrauch}, \cite[Theorem 5.2]{Weihrauch}.)  It
holds then
\begin{center}
$\BWT_\IR \equivW \BWT_{\Can} \equivW \WKL*\lim$ and $\C_\IR \equivW
\WKL*\C_\IN \equivW \C_\IN*\WKL$.
\end{center}
These equivalences justify the following
terminology: $f \leqW \BWT_\IR$ is said to be \emph{non deterministically
limit computable} and $f \leqW \C_\IR$ is said to be \emph{non
deterministically computable with finitely many mind changes}.

Finally, some degrees can be seen as \emph{jumps} of others. Given a
multi-valued function $f:\sbsq X\toto Y$ on represented spaces
$(X,\delta_X),(Y,\delta_Y)$, the jump $f':\sbsq X\toto Y$ of $f$ coincides
with $f$ but the representation of $X$ is weakened into the representation
$\delta'_X$, where $\dom(\delta'_X):=\set{\langle
p_0,p_1,p_2,...\rangle\in\IN^\IN}{\lim_{i\to\infty}
p_i\downarrow\in\dom(\delta_X)}$ and $\delta'_X(\langle p_0,p_1,p_2, \dots
\rangle)=x\in X$ iff $\delta_X(\lim_{i\to\infty}p_i)=x$. It holds then
$\BWT_2\equivsW\LLPO'$ and $\BWT_\IR\equivsW\WKL'$.

Notice that $\lim$ is a cylinder, hence $f\leqsW\lim \Otto f\leqW\lim$, and
the same holds for $\WKL$ and $\BWT_\IR$.

\section{The functions $\Sort$ and $\min^-_{\omega+1}$}\label{sec:Sort}

In \cite{NP} Neumann and Pauly introduced the function $\Sort: \Can \to \Can$
defined as
$$
\Sort(p):=
\begin{cases}
0^n1^\IN & \textrm{if $p$ contains exactly $n$ occurrences of 0}\\
0^\IN & \textrm{if $p$ contains infinitely many occurrences of 0}
\end{cases}
$$
Our results support the importance of this function, so that one might see it
as a candidate for a new possible milestone in the Weihrauch lattice. To this
end we first show that $\Sort$ is strongly Weihrauch equivalent to another
natural function.

Consider the space
$$
\omega+1:=\bigcup_{n\in\IN}\{-2^{-n}\}\cup\{0\}.
$$
This space is seen as a subspace of the represented space $\IR$, hence its
members are represented as real numbers via Cauchy sequences of elements of
the dense set of rationals in $\omega+1$, i.e., $\omega+1$ itself.

It is easy to see that this Cauchy representation $\delta_{\omega+1}$ is
computably equivalent to the representation $\rho_{\omega+1}$ with
$\dom(\rho_{\omega+1})=\set{p\in\Bai}{(\exists^{\leq 1} i)p(i)\neq 0}$ and
$$
\rho_{\omega+1}(p)=
\begin{cases}
-2^{-n} & \textrm{if $p(n)\neq0$;}\\
0 & \textrm{if $(\fa i)p(i)=0$}
\end{cases}
.
$$
To see that $\rho_{\omega+1} \leq_c \delta_{\omega+1}$, take any
$\rho_{\omega+1}$-name $p$ of $x\in\omega+1$ and consider the Cauchy sequence
$(x_n)_n$ such that $x_i:=2^{-i}$ if $p(j)=0$ for all $j\leq i$, and
$x_i:=2^{-j}$ if $p(j)\neq 0$ for a (unique) $j\leq i$. For the opposite
reduction, let $(x[n])_n$ converge effectively to $x$. To obtain a
$\rho_{\omega+1}$-name $p$ of $x$ just put $p(i)=0$ if $x[i+2]\neq 2^{-i}$
and $p(i)=1$ otherwise. Intuitively, according to the representation
$\rho_{\omega+1}$ a name of $x \in \omega+1$ is a (computable!) oracle that
for every $i\in\IN$ replies ``yes'' or ``no'' to the question ``is
$x=-2^{-i}$?''; if the answer is always ``no'' then $x= 0$.

It is often more convenient to represent $\omega+1$ by $\rho_{\omega+1}$.
However, when representing the space of closed subsets of $\omega+1$ we will
view $\omega+1$ as a computable metric space and use the standard enumeration
of the basic open balls $B(a,q)$, for $a\in\omega+1$ and $q$ a nonnegative
rational number, to obtain the representation $\psi^-_{\omega+1}$ of
$\AA_-(\omega+1)$.

As a subset of $\IR$, $\omega+1$ is also well-ordered by the usual order $<$.
The single valued function $\min^-_{\omega+1}: \sbsq \AA_-(\omega+1) \to
\omega+1$ mapping $A $ to $\min(A)$ is then defined for every $A \neq \eps$.

\begin{proposition}\label{sortomega}
$\Sort \equivsW \min^-_{\omega+1}$.
\end{proposition}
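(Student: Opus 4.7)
The plan is to prove the two reductions separately via explicit constructions; both should turn out to be strong, since in each case the output transformation will depend only on the realizer's output.

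For $\Sort\leqsW\min^-_{\omega+1}$, I would, given $p\in\Can$, associate the closed set
$$A_p := \{0\} \cup \set{-2^{-m}}{p \text{ has at most } m \text{ zeros}}\sbsq\omega+1.$$
A $\psi^-_{\omega+1}$-name of $A_p$ is computable from $p$: each isolated point $-2^{-m}$ is covered by the basic ball $B(-2^{-m},2^{-(m+2)})$, which lies away from $0$ and from every other $-2^{-k}$, so such a ball can be enumerated into the complement as soon as the $(m+1)$-th zero of $p$ has appeared. The set $A_p$ is closed because the only accumulation point of $\omega+1$, namely $0$, always belongs to it. By construction $\min(A_p)=-2^{-n}$ when $p$ contains exactly $n$ zeros, and $\min(A_p)=0$ when $p$ contains infinitely many zeros. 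From a $\rho_{\omega+1}$-name of $\min(A_p)$ (equivalent to the Cauchy name by the discussion preceding the proposition), $\Sort(p)$ can be reconstructed by writing $0$'s until a non-zero bit is observed and switching to $1$'s thereafter.

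For the converse $\min^-_{\omega+1}\leqsW\Sort$, given a $\psi^-_{\omega+1}$-name of a nonempty closed $A\sbsq\omega+1$, I would output $p\in\Can$ bit by bit while maintaining a counter $c$ initialized to $0$. At each stage $s$ the construction tests whether the basic balls enumerated so far cover $-2^{-c}$, a property that is semidecidable from the negative name: if so, it sets $p(s):=0$ and increments $c$; otherwise it sets $p(s):=1$. The delicate step to verify will be correctness of this counter: although $c$ advances by at most one per stage, each $m$ with $-2^{-m}\notin A$ is eventually witnessed in the negative name, so $c$ rises precisely to the least $n$ with $-2^{-n}\in A$ and then stabilizes (making $p$ have exactly $n$ zeros), or else grows unboundedly when $\min(A)=0$ (making $p$ contain infinitely many zeros). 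Finally, a $\rho_{\omega+1}$-name of $\min(A)$ is recovered from $\Sort(p)$ by writing $1$ at the unique position where $\Sort(p)$ changes from $0$ to $1$ (and $0$'s elsewhere), or the constant $0$ sequence if $\Sort(p)=0^\IN$.
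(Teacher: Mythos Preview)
Your proposal is correct and follows essentially the same approach as the paper's proof. In both directions the key idea---translating between ``number of zeros in $p$'' and ``index $n$ with $\min(A)=-2^{-n}$''---is identical; the only cosmetic difference is that for $\min^-_{\omega+1}\leqsW\Sort$ the paper recomputes at each stage the least element of $\omega+1$ not yet covered, while you advance a counter one step at a time, which amounts to the same thing.
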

\begin{proof}
We first show that $\Sort \leqsW \min^-_{\omega+1}$. Given $p \in \Can$ we
construct a set $A \in \AA_-(\omega+1)$ that will provide us with the
necessary information to compute $\Sort(p)$. More precisely, we define $A :=
(\omega+1) \setminus \bigcup_{s\in\IN} B_s$, where $B_s :=
\set{-2^{-m}}{m<n}$ if $p(0), \dots, p(s)$ contains exactly $n$ occurrences
of $0$. Let now $r$ be a $\rho_{\omega+1}$-name of $\min(A)$. By
construction, $r(n)\neq 0$ if $0$ occurs exactly $n$ times in $p$. We obtain
then $q:=\Sort(p)$ as follows. We inspect $r$ and as long as $r(n)=0$, we let
$q(n)=0$, so that $q=0^\IN$ if $r(n)=0$ for all $n\in\IN$. As soon as we find
an $n$ such that $r(n)\neq 0$, then we let $q(m)=1$ for every $m\geq n$, so
that in the end $q=0^n1^\IN$.

To prove $\min^-_{\omega+1} \leqsW \Sort$ argue as follows. Let $A :=
(\omega+1) \setminus \bigcup_{n\in\IN}B_n \in \AA_-(\omega+1)$ be given as
input to $\min^-_{\omega+1}$. Our strategy consists simply in choosing at any
stage $s$ the smallest element of $\omega+1$ not contained in $B_0,\dots,B_s$
and we want to write an input $r\in\Bai$ for Sort that reflects our choice.
At stage 0 let then $x_0$ be the least element of $\omega+1$ not contained in
$B_0$. If this is 0, then we write $r(0)=0$. Otherwise, let it be $-2^{-n_0}$
for some $n_0$. Then we put $0^{n_0}1$ as initial segment of the input $r$ of
Sort. At stage $k+1$ we consider the sets $B_0, \dots, B_{k+1}$. Let
$x_{k+1}$ be the least element not contained in $\bigcup_{i\leq k+1}B_i$, and
let $w$ be the initial segment of $r$ obtained at stage $k$. If $x_{k+1}=0$,
then we let $r(|w|):=0$. Otherwise, if $x_{k-1}=-2^{-n_{k+1}}$ for some
$n_{k+1}$ we extend $w$ so to obtain a finite prefix $ww'1$ with $ww'$
containing exactly $n_{k+1}$ occurrences of $0$ (possibly $|w'|=0$). In the
end, by construction, $r$ contains exactly $n$ occurrences of 0 if
$\min(A)=-2^{-n}$ for some $n\in\IN$, and $r$ contains infinitely many $0$ if
$\min(A)=0$. We now inspect $\Sort(r)$ to compute a $\rho_{\omega+1}$-name
$q$ of $\min(A)$. Recall that $\Sort(r)=0^\IN$ if $r$ contains infinitely
many occurrences of $0$, that is, $\min(A)=0$, otherwise
$\Sort(r)=0^{n}1^\IN$ if $r$ contains exactly $n$ occurrences of $0$, that
is, $\min(A)=-2^{-n}$. Therefore, to obtain a correct $\rho_{\omega+1}$-name
$q$, we let $q(n):=\Sort(r)(n)$ as long as $\Sort(r)(n)=0$. If suddenly
$\Sort(r)(n)=1$, then we let $q(n):=1$ and $q(m)=0$ for all $m>n$. In this
way we obtain exactly the $\rho_{\omega+1}$-name of $\min(A)$.
\end{proof}

Using Proposition \ref{sortomega}, we now study the degree of $\Sort$ in more
detail. The following result is given already in Proposition 24 of \cite{NP}
but we give here a more direct proof of the same result in terms of
computability with finitely many mind changes using $\min^-_{\omega+1}$.

\begin{proposition}\label{CINsottomega}
$\C_\IN \lW \Sort$.
\end{proposition}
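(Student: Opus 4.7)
The plan is two-directional. For the reduction $\C_\IN \leq_W \Sort$, by Proposition~\ref{sortomega} it is enough to prove $\C_\IN \leq_W \min^-_{\omega+1}$. Given a $\psi^-_\IN$-name of a non-empty $A \subseteq \IN$ (an enumeration of $\IN \setminus A$), I would computably produce a $\psi^-_{\omega+1}$-name of
\[A^* := \{-2^{-n} : n \in A\} \cup \{0\} \in \AA_-(\omega+1)\]
by enumerating the singleton open ball $\{-2^{-n}\}$ (available in $\omega+1$ since $-2^{-n}$ is isolated) exactly when $n$ is enumerated as being outside $A$. Since $0 \in A^*$ always, no ball covering $0$ is ever enumerated, and the resulting name is correct. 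Then $\min^-_{\omega+1}(A^*) = -2^{-\min A}$, whose $\rho_{\omega+1}$-name has a unique nonzero entry at position $\min A$, from which $\min A \in A$ is read off.

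For the strict inequality $\Sort \not\leq_W \C_\IN$, combining Proposition~\ref{sortomega} with the characterization of $\leq_W \C_\IN$ as computability with finitely many mind changes, it suffices to show that $\min^-_{\omega+1}$ is not so computable. I would argue by diagonalization against any candidate machine $M$, constructing adversarially a $\psi^-_{\omega+1}$-name $p$ in phases $k = 0, 1, 2, \dots$. At the start of phase $k$, the name $p$ built so far enumerates the balls $\{-2^{-0}\}, \dots, \{-2^{-(k-1)}\}$ followed by padding with repeats of earlier balls. If the padding were continued forever, the induced set would be $\omega+1 \setminus \{-2^{-0}, \dots, -2^{-(k-1)}\}$, whose minimum is $-2^{-k}$ with $\rho_{\omega+1}$-name $0^k 1 0^\IN$; by the finite-mind-change correctness of $M$ on this hypothetical input, at some finite stage $M$ must output a $1$ at position $k$. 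I wait for this stage, then append the ball $\{-2^{-k}\}$ to $p$ and proceed to phase $k+1$; the new input forces $M$ to retract that $1$, yielding a mind change. Iterating, the constructed $p$ enumerates every $\{-2^{-n}\}$ in the limit and is thus a valid $\psi^-_{\omega+1}$-name of $\{0\}$, yet $M$ performs infinitely many mind changes on it, contradicting the hypothesis.

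The main obstacle is ensuring each phase transition is triggered in finite time; this follows because the "padding forever" extension at phase $k$ is a legitimate name of a set whose minimum has a nonzero $\rho_{\omega+1}$-bit at position $k$, so finite-mind-change correctness of $M$ on that input guarantees that bit is produced at some finite stage, at which moment the adversary intervenes.
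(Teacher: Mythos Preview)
Your proof is correct and follows essentially the same approach as the paper: the reduction $\C_\IN \leqW \min^-_{\omega+1}$ via the map $A \mapsto \{-2^{-n}: n\in A\}\cup\{0\}$ is identical to the paper's, and your explicit adversarial construction for the non-reduction is a more rigorous rendering of the paper's informal dilemma argument (the machine cannot commit to any $-2^{-k}$ without risking later refutation, nor to $0$ without risking that some $\{-2^{-m}\}$ is never enumerated). One cosmetic point: the $\rho_{\omega+1}$-name of $-2^{-k}$ has a \emph{nonzero} entry at position $k$, not necessarily a $1$, but this does not affect the argument.
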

\begin{proof}
By Proposition \ref{sortomega} we can substitute $\Sort$ with
$\min^-_{\omega+1}$.

To prove $\C_\IN \leqW \min^-_{\omega+1}$, consider the operator $\min^-_\IN:
\sbsq \AA_-(\IN) \to \IN, A \mapsto \min(A)$, for $A \neq \eps$, which is
known to be Weihrauch equivalent to $\C_\IN$ by \cite[Lemma 2.3]{PFD}. We
obtain $\C_\IN \equivW \min^-_\IN \leqsW \min^-_{\omega+1}$ (for the
rightmost reduction observe that the map $A \mapsto \set{-2^{-n}}{n \in A}
\cup \{0\}$ from $\AA_-(\IN)$ to $\AA_-(\omega+1)$ is clearly computable).

To prove that $\min^-_{\omega+1} \nleqW \C_\IN$, we will show that
$\min^-_{\omega+1}$ is not computable with finitely many mind changes.

Let $p$ indeed be a $\psi^-_{\omega+1}$-name of a nonempty closed $A \sbsq
\omega+1$. The task is to output the minimal element in $A$. Suppose that $p$
lists only open balls of the type $\{-2^{-i}\}$ for various $i\in\IN$. If the
sequence encoded by $p$ will in the end contain every open ball of the form
$\{-2^{-i}\}$, the temporary choice of any element of the form $-2^{-k}$ will
sooner or later force us to select a larger candidate. In this case we obtain
the name of the correct output $0$ only after infinitely many mind changes.

We should therefore choose $0$ as the eventual output at some stage $s$, when
only a finite initial segment of $p$ has been read. However, this output is
incorrect if the sequence encoded by $p$ never mentions a specific element
$\{-2^{-m}\}$, which is possible on the basis of the finite initial segment
of $p$ read by the computation at sage $s$.
\end{proof}

The next result is also given in Proposition 24 of \cite{NP}:

\begin{proposition}\label{omegalim}
$\Sort \lW \lim$.
\end{proposition}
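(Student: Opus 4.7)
By Proposition \ref{sortomega} it suffices to establish $\min^-_{\omega+1} \leqW \lim$ and $\lim \nleqW \Sort$ separately.

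For the reduction, the plan is to feed $\lim$ with the sequence of ``best current guesses'' at the minimum. Starting from a $\psi^-_{\omega+1}$-name $p$ enumerating open basic balls $(B_{k_s})_s$ whose union is the complement of $A$, at stage $s$ I would compute the least element $a_s$ of $\omega+1$ not covered by $B_{k_0} \cup \dots \cup B_{k_{s-1}}$ and write down its $\rho_{\omega+1}$-name $r_s$ (so $r_s = 0^n 1 0^\IN$ if $a_s = -2^{-n}$, and $r_s = 0^\IN$ if $a_s = 0$). Since the enumerated union only grows, $(a_s)_s$ is monotone non-decreasing in the usual order of $\omega+1$ and converges to $\min(A)$. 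If $\min(A) = -2^{-n^*}$ then $(a_s)_s$ eventually stabilizes at this value and so does $(r_s)_s$; if instead $\min(A) = 0$ then the position of the single nonzero bit in $r_s$ escapes to infinity, so $(r_s)_s$ converges pointwise in $\Bai$ to $0^\IN$. Either way $(r_s)_s \in \dom(\lim)$ and $\lim(r_s)_s$ is a $\rho_{\omega+1}$-name of $\min(A)$.

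For $\lim \nleqW \Sort$ I would argue by contradiction: suppose computable $H$ and $K$ witness such a reduction. The key observation is that $\Sort$ maps every computable input to a computable output --- indeed, if $y \in \Can$ is computable and contains exactly $n$ zeros, then $\Sort(y) = 0^n 1^\IN$ is computable, and if $y$ contains infinitely many zeros then $\Sort(y) = 0^\IN$ is again computable. Since $\Can$ carries the identity representation, the function $\Sort$ is itself one of its realizers; feeding in any computable name $p$ of a computable sequence $(p_n)_n \in \Bai^\IN$ then yields computable $K(p)$, hence computable $\Sort(K(p))$, hence computable $H(p, \Sort(K(p)))$. But this last object must be a name of $\lim_{n \to \infty} p_n$, and taking $(p_n)_n$ to be any computable approximation to (say) a name of the halting set produces a non-computable limit, the desired contradiction.

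The easy reduction hinges on choosing the $\rho_{\omega+1}$ representation for the output rather than the Cauchy representation, which converts monotone convergence in $\omega+1$ directly into pointwise convergence of names in $\Bai$ with no effectivity overhead; the only point that requires care in the separation is emphasizing that ``$\Sort$ sends computable to computable'' is a non-uniform statement about values of the function (and thus of any realizer) on specific inputs, not about uniform computability of $\Sort$ itself --- it is exactly this non-uniform computability, finite on each input, that prevents $\Sort$ from simulating the fully infinitary $\lim$.
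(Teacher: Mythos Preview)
Your proof is correct and takes essentially the same approach as the paper. The paper asserts $\Sort \leqsW \lim$ without details (a direct argument---at stage $s$ output $0^{n_s}1^\IN$ where $n_s$ counts the zeros in $p{\restriction}s$---is slightly shorter than your detour through $\min^-_{\omega+1}$, but yours is valid), and for the non-reduction it invokes the Invariance Principle by name, which is precisely the argument you spell out: $\Sort$ has only computable outputs while $\lim$ maps some computable input to a non-computable one.
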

\begin{proof}
It is easy to see that $\Sort \leqsW \lim$. To prove that the opposite
reduction does not hold, apply the Invariance Principle: $\Sort$ has only
computable outputs, whereas $\lim$ maps some computable input to an
incomputable output.
\end{proof}

For the following results, we need the Bolzano Weierstra\ss\ operators
$\BWT_n$, with $n:=\{0,\dots,n-1\}$ for $n\geq 1$, and the operators
$\UBWT_X:\sbsq X^\IN\to X$, which are the restrictions of the operators
$\BWT_X$ to the sequences with compact range for which the accumulation point
is unique.

\begin{proposition}\label{omegaBWTn}
For every $n \geq 1$, $\min^-_{\omega+1} \nleqW \BWT_n$.
\end{proposition}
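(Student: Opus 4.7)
The plan is a direct diagonalisation. Suppose for contradiction that $\min^-_{\omega+1}\leqW\BWT_n$ is witnessed by computable $H,K$, and for each $k\in n=\{0,\dots,n-1\}$ set $H_k(p):=H(p,k^\omega)$. Since $n$ is finite, every infinite sequence in $n$ has some value occurring infinitely often, so on any $\psi^-_{\omega+1}$-name $p$ of a nonempty closed $A\sbsq\omega+1$ at least one $k\in n$ is an accumulation point of $K(p)$, and the reduction forces $H_k(p)$ to be a $\rho_{\omega+1}$-name of $\min(A)$. The crucial structural fact I would exploit is that each $\rho_{\omega+1}$-name has at most one non-zero entry, so across the $n$ functions $H_0,\dots,H_{n-1}$ there can be at most $n$ ``committed'' positions in $\IN$ on any given $p$, while $\{0,1,\dots,n\}$ offers $n+1$ candidate positions that cannot all be covered.

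The construction proceeds in stages. At stage $s$ I keep a finite word $w_s$, a set $E_s\sbsq\IN$ of positions $j$ for which the singleton ball $\{-2^{-j}\}$ has been enumerated in $w_s$, and for each $k\in n$ the position, if any, at which $H_k$ on input $w_s\cdot$ has already committed a non-zero; let $J^*_s$ be the union of these positions, so $|J^*_s|\leq n$. I extend $w_s$ by enumerating the ball $\{-2^{-j}\}$ for every $j\in(J^*_s\cap\{0,\dots,n\})\setminus E_s$ together with $\{-2^{-(n+1+s)}\}$ to keep $p$ infinite. Commitments are monotone, so $J^*_s$ stabilises to some $J^*_\infty$ with $|J^*_\infty|\leq n$, and the resulting $p$ names $A=\{0\}\cup\{-2^{-j}:j\in\{0,\dots,n\}\setminus J^*_\infty\}$.

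Setting $m:=\min(\{0,\dots,n\}\setminus J^*_\infty)$ (which exists because $|J^*_\infty|\leq n<n+1$) gives $\min(A)=-2^{-m}$ with $m\notin J^*_\infty$. For any $k\in n$ such that $H_k(p)$ is a valid $\rho_{\omega+1}$-name, either its non-zero entry sits at some $j_k\in J^*_\infty$ and so names $-2^{-j_k}\neq-2^{-m}$, or it is all zeros and names $0\neq-2^{-m}$; either way $H_k(p)$ fails to name $\min(A)$. This contradicts the reduction at whatever $k\in n$ happens to be an accumulation point of $K(p)$. The only delicate point I would need to verify carefully is that the construction never prematurely enumerates the eventual witness $m$: reserving the whole block $\{0,\dots,n\}$ and enumerating $j$ from it only once some $H_k$ has actually committed to $j$ handles this cleanly, and the strict inequality $|J^*_\infty|<n+1$ is precisely what guarantees that a safe $m$ remains.
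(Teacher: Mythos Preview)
Your diagonalisation is correct. The key observations---that a $\rho_{\omega+1}$-name has at most one non-zero entry, that there are only $n$ functions $H_k$ and hence at most $n$ first-committed positions, and that reserving the block $\{0,\dots,n\}$ guarantees an uncommitted index $m$---fit together exactly as you describe. One point you leave implicit but which is needed: for a $k$ that is \emph{not} an accumulation point of $K(p)$, $H_k$ may well write several non-zeros; your construction handles this because you only ever record the \emph{first} committed position for each $k$, and the final contradiction only concerns the $k$'s that \emph{are} accumulation points, for which $H_k(p)$ is forced to be a genuine $\rho_{\omega+1}$-name.

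The paper's proof is entirely different and much shorter: it observes that $\min^-_{\omega+1}\leqW\BWT_n$ together with $\C_\IN\leqW\min^-_{\omega+1}$ (Proposition~\ref{CINsottomega}) and $\C_\IN\equivW\UBWT_\IN$ would yield $\UBWT_{n+1}\leqW\UBWT_\IN\leqW\BWT_n$, contradicting the known separation $\UBWT_{n+1}\nleqW\BWT_n$ from \cite[Proposition~13.9]{BW}. So the paper offloads the combinatorics to an external reference, while your argument is a self-contained priority-style construction that makes the pigeonhole reason for the failure explicit. Your route is more informative about \emph{why} the reduction fails; the paper's route is quicker once the cited machinery is in hand.
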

\begin{proof}
$\min^-_{\omega+1} \leqW \BWT_n$ would imply $\C_\IN \leqW \BWT_n$ by
Proposition \ref{CINsottomega}. Since $\C_\IN \equivW \UBWT_\IN$ by
\cite[Corollary 11.24]{BW} and obviously $\UBWT_{n+1} \leqW \UBWT_\IN$, this
would in turn imply $\UBWT_{n+1} \leqW \BWT_n$, which is impossible by
\cite[Proposition 13.9]{BW}.
\end{proof}

We now show that $\Sort$ is not non deterministically computable with
finitely many mind changes:

\begin{proposition}
\label{prop:sortcr}
$\Sort \nW \C_\IR$
\end{proposition}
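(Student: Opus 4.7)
The statement $\Sort \nW \C_\IR$ is the conjunction of $\Sort \nleqW \C_\IR$ and $\C_\IR \nleqW \Sort$, and my plan is to dispatch them separately.

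For $\C_\IR \nleqW \Sort$ the argument is immediate. By Proposition~\ref{omegalim} we have $\Sort \leqW \lim$, so a putative reduction $\C_\IR \leqW \Sort$ would give $\WKL \leqW \C_\IR \leqW \Sort \leqW \lim$, contradicting the classical fact that $\WKL \nleqW \lim$ (a limit-computable realizer of $\WKL$ would be effectively single-valued, which $\WKL$ is known to refuse).

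For $\Sort \nleqW \C_\IR$ I would argue by contradiction. Assuming $\Sort \leqW \C_\IR$, Proposition~\ref{sortomega} yields $\min^-_{\omega+1} \leqW \C_\IR$, and since $\C_\IR$ is a cylinder this strengthens to $\min^-_{\omega+1} \leqsW \C_\IR$, witnessed by computable $K$ and $H$. So $K$ transforms each $\psi^-_{\omega+1}$-name of a nonempty $A \sbsq \omega+1$ into a $\psi^-_\IR$-name of a nonempty closed $B_A \sbsq \IR$, and $H$ sends the Cauchy name of any $x \in B_A$ to a $\rho_{\omega+1}$-name of $\min(A)$. Because $H$ depends only on the name of $x$ (strong reducibility), I obtain the crucial disjointness property: whenever $\min(A_1) \neq \min(A_2)$ we have $B_{A_1} \cap B_{A_2} = \eps$.

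To produce the contradiction I plan to work with the family $A_k := \set{-2^{-m}}{m \geq k} \cup \{0\}$ (with $\min(A_k) = -2^{-k}$) together with $A_\infty := \{0\}$ (with $\min(A_\infty) = 0$). Their $\psi^-_{\omega+1}$-names can be arranged so that the name of $A_k$ converges in $\Bai$ to the name of $A_\infty$; by continuity of $K$ the sets $B_{A_k}$ then shrink into $B_{A_\infty}$ in the upper-Fell sense. Fix $x_\infty \in B_{A_\infty}$ with Cauchy name $r_\infty$, so that $H(r_\infty) = 0^\IN$. The goal is to find $k$ large enough and some $x_k \in B_{A_k}$ with a Cauchy name $r_k$ that agrees with $r_\infty$ on a prefix longer than the continuity modulus of $H$ at position $k$: then $H(r_k)$ and $H(r_\infty)$ would coincide on positions $0,\dots,k$, but $H(r_k) = 0^k 1 0^\IN$ disagrees with $H(r_\infty) = 0^\IN$ exactly at position $k$, giving the required contradiction. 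The hard part will be producing $x_k$ close enough to $x_\infty$: upper-Fell convergence only yields $\limsup B_{A_k} \sbsq B_{A_\infty}$, not that $B_{A_k}$ accumulates at $x_\infty$ itself. To close this gap I would abandon the pre-specified family and instead build $A$ adversarially stage by stage, inspecting the running output of $K$ and choosing which further balls to exclude so as to pin $B_A$ near a preselected witness — in the spirit of the mind-change diagonalization of Proposition~\ref{CINsottomega}, adapted to absorb the $\WKL$-nondeterminism of $\C_\IR$ via the fact that the branching available to the $\WKL$ component is compactly bounded and hence can be handled by finite enumeration.
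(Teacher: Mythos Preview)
Your argument for $\C_\IR \nleqW \Sort$ contains a genuine error: the claimed ``classical fact'' $\WKL \nleqW \lim$ is false. In fact $\WKL \leqsW \lim$ holds, for instance because $\WKL \equivsW \widehat{\LLPO}$, $\lim \equivsW \widehat{\LPO}$, $\LLPO \leqsW \LPO$, and parallelization is monotone; alternatively, the leftmost path through an infinite binary tree is uniformly limit-computable from the tree. Your parenthetical justification (``a limit-computable realizer of $\WKL$ would be effectively single-valued'') is not a valid obstruction: a multi-valued problem can perfectly well be Weihrauch-reducible to a single-valued one, the reduction simply singles out one admissible output. So this half of your proof collapses. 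The paper handles this direction by a different invariant: $\Sort$ is \emph{non-uniformly computable} (every output is a computable point of $\Can$), whereas $\C_\IR$ is not, since $\WKL \leqW \C_\IR$ and there exist computable infinite binary trees with no computable path. Non-uniform computability is preserved downward under $\leqW$, and that gives $\C_\IR \nleqW \Sort$ immediately.

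For $\Sort \nleqW \C_\IR$ your plan is at best a sketch, and you yourself identify the gap: upper-Fell convergence of the sets $B_{A_k}$ gives you no control over where the points of $B_{A_k}$ sit, so you cannot force a name $r_k$ to share a long prefix with $r_\infty$. The proposed fix (an adversarial stage-by-stage construction absorbing the compact nondeterminism of $\WKL$) is plausible in spirit but would require substantial work to make rigorous, and it is not clear how the ``finite enumeration'' would interact with the unbounded $\C_\IN$-component of $\C_\IR \equivW \WKL * \C_\IN$. The paper avoids all of this with a one-line structural argument: $\C_\IR$ is \emph{low} (i.e.\ $\lim * \C_\IR \leqW \lim$, a cited result), whereas $\Sort$ is not low, because already $\LPO * \Sort$ computes the characteristic function of the $\Sigma^0_2$-complete set $\set{p}{p \text{ has finitely many }0\text{'s}}$ and hence $\LPO * \Sort \nleqW \lim$. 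Since lowness is downward closed under $\leqW$, this yields $\Sort \nleqW \C_\IR$ directly.
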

\begin{proof}
Recall that an operation $f$ is \emph{non-uniformly computable}, if $f(x)$
contains a computable solution for all computable $x$. A Weihrauch degree $f$
is called \emph{low}, if $\lim*f \leqW \lim$. Both properties are
preserved downwards under Weihrauch reduction.

Notice that $\Sort$ is non-uniformly computable as all solutions are
computable. Moreover $\LPO*\Sort$ computes the characteristic
function of the $\Sigma^0_2$-complete set
$$\set{p \in \Bai}{\text{$p$
contains finitely many occurrences of } 0}.$$
Therefore $\LPO*\Sort\nleqW\lim$. Since  $\LPO\leqW\lim$ and $*$ is monotone,  it follows that $\Sort$ is not low. On the other hand, $\C_\IR$ is
low by \cite[Theorem 8.7]{BdBP}, but not non-uniformly computable because
$\WKL \leqW \C_\IR$ and there exist computable infinite binary trees without
computable infinite branches. The incomparability of $\Sort$ and $\C_\IR$
then follows immediately.
\end{proof}

\section{Exact projections operators}\label{sec:exact}

We start with the formal definition of the exact projection operators.

\begin{definition}
Given a metric space $X$, a point $x \in X$ and a nonempty set $A \subseteq
X$ we say that $y \in A$ is a \emph{projection point of $x$ onto $A$} if
$d(x,y) =d(x,A)$ (where, as usual, $d(x,A) = \inf \set{d(x,y)}{y \in A}$). In
other words, the projection points of $x$ onto $A$ are the points of $A$ with
minimal distance from $x$.
\end{definition}

Notice that if $x \in A$ then $x$ itself is the unique projection point of
$x$ onto $A$. Obviously, projection points of $x$ onto $A$ exist if and only
if the infimum in the definition of $d(x,A)$ is actually a minimum. We will
be mostly interested in the case where $X = \IR^n$ is a Euclidean space and
$A$ is closed; in this situation projection points of any $x \in \IR^n$ onto
$A$ do exist.

If $X$ is a computable metric space, projections points give rise to several
multi-valued functions, depending on the representation of $A \subseteq X$,
which we will always assume to be at least closed.

\begin{definition}
Given a computable metric space $X$ the \emph{(exact) negative, positive and
total closed projection operators on $X$} are the partial multi-valued
functions $\ProjC^-_X$, $\ProjC^+_X$ and $\ProjC_X$ which associate to every
$x \in X$ (with Cauchy representation) and every closed $A \neq \emptyset$
(with negative, positive and total representation, respectively) the set of
the projection points of $x$ onto $A$.

Thus $\ProjC^-_X: \sbsq X \times \AA_-(X) \toto X$, $\ProjC^+_X: \sbsq X
\times \AA_+(X) \toto X$, and $\ProjC_X: \sbsq X \times \AA(X) \toto X$.

The \emph{(exact) negative, positive and total projections operators for
compact sets} are defined by replacing $\AA_-(X)$, $\AA_+(X)$, and $\AA(X)$
with $\KK_-(X)$, $\KK_+(X)$, and $\KK(X)$ respectively. These are denoted
$\ProjK^-_X$, $\ProjK^+_X$, and $\ProjK_X$.
\end{definition}

The first obvious observation is that the negative projection operators
compute the corresponding choice operators.

\begin{fact}\label{kcredexact}
$\K_X \leqsW \ProjK^-_X$ and $\C_X \leqsW \ProjC^-_X$ for all computable
metric spaces $X$.
\end{fact}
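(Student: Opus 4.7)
The reductions are essentially trivial: they amount to projecting a fixed computable point onto the given closed/compact set, since any projection point of any point onto $A$ is by definition an element of $A$.

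More precisely, the plan is to fix a computable point $x_0 \in X$, say $x_0 = \alpha(0)$ where $\alpha$ is the dense sequence witnessing that $X$ is a computable metric space. Given a $\psi_X^-$-name $p$ of a nonempty closed set $A$, the preprocessing map $K$ produces a name of the pair $(x_0, A) \in X \times \AA_-(X)$ by pairing a fixed computable Cauchy name of $x_0$ with $p$. Since $A$ is nonempty (and we are in the domain of $\C_X$), the pair $(x_0, A)$ lies in $\dom(\ProjC_X^-)$, so any realizer of $\ProjC_X^-$ returns a name of some $y \in A$ with $d(x_0,y) = d(x_0,A)$. This $y$ is in particular an element of $A$, i.e., a valid output of $\C_X$ on input $A$, so the postprocessing $H$ is just the identity on realizer outputs. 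Because $H$ never needs to consult the original input $p$, this witnesses $\C_X \leqsW \ProjC_X^-$.

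For the compact version one argues in exactly the same way, but with $\psi_X^-$ replaced by $\kappa_X^-$ and $\AA_-(X)$ replaced by $\KK_-(X)$. The only point to check is that the preprocessing computably produces a $\kappa_X^-$-name of $K$ from a $\kappa_X^-$-name of $K$ paired with a Cauchy name of $x_0$, which is immediate from the definition of $\kappa_X^-$. Again the output of the projection oracle is already a valid output of $\K_X$, giving $\K_X \leqsW \ProjK_X^-$.

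There is no real obstacle here; the argument is just the observation that the closed/compact choice problem reduces to the projection problem by fixing an arbitrary reference point.
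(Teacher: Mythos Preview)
The paper gives no proof of this fact; it is stated as an obvious observation, and your argument---fix a computable point $x_0$ and feed $(x_0,A)$ to the projection operator---is exactly the intended one. For $\K_X \leqsW \ProjK^-_X$ it is completely correct: a nonempty compact set always admits a projection point from any $x_0$, so $(x_0,A)\in\dom(\ProjK^-_X)$ whenever $A\in\dom(\K_X)$.

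There is, however, a genuine gap in your justification of $\C_X \leqsW \ProjC^-_X$ for \emph{arbitrary} computable metric spaces. You write ``since $A$ is nonempty \dots\ the pair $(x_0,A)$ lies in $\dom(\ProjC_X^-)$'', but membership in that domain requires that a projection point of $x_0$ onto $A$ actually exists, i.e.\ that the infimum $d(x_0,A)$ is attained. In a general (even complete) computable metric space this can fail: for instance in $\ell^2$ with $A=\{(1+\tfrac1n)e_n : n\geq 1\}$ and $x_0=0$ one has $d(0,A)=1$ but no point of $A$ realizes it, so $(0,A)\notin\dom(\ProjC^-_{\ell^2})$ even though $A\in\dom(\C_{\ell^2})$. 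On such an input a realizer of $\ProjC^-_X$ is unconstrained, and your reduction breaks down. The paper itself flags earlier that projection points need not exist in general and that it is ``mostly interested in the case $X=\IR^n$'', where closed sets do admit projections and your argument goes through; all later uses of the fact are for $X=\IR^n$. So your proof is fine for the compact claim and for the spaces the paper actually uses, but the step you marked as automatic is the one that fails in the stated generality.
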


Projections on compact sets are special cases of projections on closed sets.

\begin{fact}\label{kridc}
For all computable metric spaces $X$:
\begin{enumerate}
\item $\ProjK^-_X \leqsW \ProjC^-_X$,
\item $\ProjK^+_X \leqsW \ProjC^+_X$,
\item $\ProjK_X
\leqsW \ProjC_X$.
\end{enumerate}
\end{fact}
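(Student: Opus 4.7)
The plan is to observe that in each of the three settings, a name for a compact set computably contains a name for the same set viewed as a closed set; so the reductions are essentially ``forgetful'' on the input side and the identity on the output side. This will give strong Weihrauch reducibility without any post-processing of the projection operator's output.

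For part (1), I would argue as follows. By the definition of $\kappa_X^-$, a name of $K \in \KK_-(X)$ has the form $\langle k_0,\dots,k_{j-1}\rangle p$ where $p$ is a $\psi_X^-$-name of $K$ together with the side information that $K \subseteq \bigcup_{i<j}\overline{B}_{k_i}$. The map that strips the prefix $\langle k_0,\dots,k_{j-1}\rangle$ and returns $p$ is clearly computable, and by construction $\psi_X^-(p) = K$. The input translation $K:\sbsq\Bai\to\Bai$ thus takes a pair $\langle q, \langle k_0,\dots,k_{j-1}\rangle p\rangle$ (encoding the point $x$ and the compact set $K$) and outputs $\langle q, p\rangle$, which is a valid $(\delta_X,\psi_X^-)$-name of the same pair $(x,K)$. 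Feeding this to any realizer of $\ProjC^-_X$ produces a name of some point $y \in X$ with $d(x,y) = d(x,K)$; but this $y$ is, by definition, a projection point of $x$ onto $K$, so it is an admissible output of $\ProjK^-_X(x,K)$. No access to the original input is required in the output stage, so the reduction is strong.

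For parts (2) and (3), I would give the same argument verbatim, replacing $\psi_X^-$ and $\kappa_X^-$ by the positive or total versions: the definition of $\kappa_X^+$ (respectively $\kappa_X$) explicitly embeds a $\psi_X^+$-name (respectively $\psi_X$-name) of $K$ after a finite prefix describing a closed-ball cover, and that prefix is again discarded by a trivial computable function. The geometric fact that ``projection points of $x$ onto $K$'' is the same set of points whether $K$ is presented as compact or merely as closed means that the closed-projection output is correct for the compact-projection problem. Hence $\ProjK^+_X \leqsW \ProjC^+_X$ and $\ProjK_X \leqsW \ProjC_X$.

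There is no real obstacle here; the only thing to check carefully is that the extracted string is indeed in the domain of the closed-set representation (it is, by the very definition of the compact-set representations) and that the projection points in the closed and compact pictures agree (they do, since the underlying set and the metric are the same). Everything else is purely syntactic bookkeeping.
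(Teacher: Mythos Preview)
Your proposal is correct and takes essentially the same approach as the paper, which simply states that the result ``follows immediately by the definition of the representations.'' You have spelled out precisely why this is immediate: the compact-set representations $\kappa_X^-$, $\kappa_X^+$, $\kappa_X$ are by definition a finite prefix followed by a corresponding closed-set name, so discarding the prefix is the required computable input translation and the output needs no modification.
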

\begin{proof}
The proof follows immediately by the definition
of the representations.
\end{proof}

In some important cases, the inverse reduction holds as well. In the next
result, a computable metric space $X$ is \emph{computably compact} when it is
computable as a member of $\KK_-(X)$, that is, it has some computable
$\kappa_X^-$-name (or, equivalently, of $\KK(X)$).

\begin{fact}\label{kccompact}
For all computably compact metric spaces $X$:
\begin{enumerate}
\item $\ProjK^-_X \equivsW \ProjC^-_X$,
\item $\ProjK^+_X \equivsW \ProjC^+_X$,
\item $\ProjK_X\equivsW \ProjC_X$.
\end{enumerate}
\end{fact}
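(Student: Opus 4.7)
The plan is to derive each reverse reduction from the observation that, under computable compactness of $X$, every closed $A \sbsq X$ is compact and one can computably promote a closed-set name of $A$ into a compact-set name of $A$ in the matching flavor. Combined with Fact \ref{kridc}, this yields the three equivalences, with outputs passed through the identity because the projection points of $x$ onto $A$ depend only on the underlying set $A$ and not on its representation.

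Concretely, I would first fix a computable $\kappa_X^-$-name $\langle k_0,\dots,k_{j-1}\rangle q$ of $X$, which exists by hypothesis. Its finite prefix witnesses $X \sbsq \bigcup_{i<j}\overline B_{k_i}$ with each $B_{k_i}\neq X$, and since any closed $A \sbsq X$ is automatically covered by the same balls, prepending this prefix to any $\psi_X^-$-name of $A$ yields a valid $\kappa_X^-$-name of $A$. The reduction $\ProjC^-_X \leqsW \ProjK^-_X$ then uses this prepending map on inputs and the identity on outputs. For the positive case I would exploit that $X$ also has a trivially computable $\psi_X^+$-name (enumerate all indices of nonempty basic open balls) and prepend the same finite cover to a $\psi_X^+$-name of $A$ to obtain a $\kappa_X^+$-name; for the total case I would combine the two conversions in parallel on the component names.

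The construction is essentially routine and I do not expect a genuine obstacle; the one point warranting care is the side condition $B_{k_i}\neq X$ in the definitions of $\kappa_X^-,\kappa_X^+,\kappa_X$, but this is automatic because the prepended prefix is borrowed from a valid $\kappa_X^-$-name of $X$ itself, where the condition already holds. Once the representation conversions are in place, the three reductions fit the strong Weihrauch template trivially, and together with Fact \ref{kridc} they give the claimed $\equivsW$.
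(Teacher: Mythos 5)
Your proof is correct and follows essentially the same approach as the paper: fix a finite cover of $X$ by basic balls from a computable $\kappa_X^-$-name (available by computable compactness) and prepend it to the closed-set name to exhibit the identity maps $\AA_-(X)\to\KK_-(X)$, $\AA_+(X)\to\KK_+(X)$, $\AA(X)\to\KK(X)$ as computable, after which the reverse reductions of Fact \ref{kridc} are immediate. The aside about $X$ having a trivially computable $\psi_X^+$-name is not actually needed (only the finite-cover prefix matters for all three flavors), but this does not affect correctness.
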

\begin{proof}
The inverse reductions of Fact \ref{kridc} can be obtained by fixing a finite
cover of $X$ by basic balls and use it to show that $id:\AA_-(X)\to\KK_-(X)$,
$id:\sbsq\AA_+(X)\to\KK_+(X)$, and $id:\sbsq\AA(X)\to\KK(X)$ are computable.
\end{proof}

\begin{theorem}\label{CequivK}
For $n \geq 1$:
\begin{enumerate}
\item $\ProjK^-_{\IR^n} \equivW \ProjC^-_{\IR^n}$,
\item $\ProjK^+_{\IR^n} \equivsW\ProjC^+_{\IR^n}$,
\item $\ProjK_{\IR^n} \equivsW \ProjC_{\IR^n}$.
\end{enumerate}
\end{theorem}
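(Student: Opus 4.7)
Fact \ref{kridc} already supplies $\ProjK^\ast_{\IR^n} \leqsW \ProjC^\ast_{\IR^n}$ for $\ast\in\{-,+,\emptyset\}$, so I concentrate on the reverse direction. The unifying geometric observation is that every projection of $x$ onto $A$ lies in $\overline B(x,d(x,A))$, so any compact $K\subseteq A$ that contains the projection points has the property that every projection of $x$ onto $K$ is automatically a projection of $x$ onto $A$, because points of $A\setminus K$ sit strictly farther from $x$ than every point of $K$.

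For (2), a positive name of $A$ provides, by Remark \ref{rem:psi+}, a dense sequence $(a_i)_i$ in $A$. I set $R:=d(x,a_0)+1$ (computable and strictly above $d(x,A)$), extract the semi-decidable subsequence of those $a_i$ with $d(x,a_i)<R$, and let $K$ be its closure. Then $K\subseteq A\cap\overline B(x,R)$ is compact and contains every projection of $x$ onto $A$, since such points have distance $d(x,A)<R$ and are limits of the dense sequence of $A$. The subsequence paired with the bound $R$ is a $\KK_+$-name of $K$, and $\ProjK^+_{\IR^n}$ applied to $(x,K)$ returns a projection; this is strongly Weihrauch. For (3), I additionally produce a negative name of $A\cap\overline B(x,R)$ by enumerating basic open balls either listed by the negative name of $A$ or contained in $\{y:d(x,y)>R\}$; the two halves assemble into a $\KK$-name, and $\ProjK_{\IR^n}$ closes the argument.

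For (1) the picture is more delicate because a $\psi^-$-name of $A$ offers neither a point of $A$ nor any finite-stage upper bound on $d(x,A)$, which is exactly why only a Weihrauch, rather than strong, reduction is to be expected. The plan is to exploit the Weihrauch freedom to let the post-processor consult the original input. I would feed $\ProjK^-_{\IR^n}$ a compact set $K$ whose $\KK_-$-bound depends only on the Cauchy name of $x$ (so that a $\KK_-$-name is unconditionally producible from the $\psi^-$-name of $A$), arranged so that the returned point, paired with the original $\psi^-$-name of $A$, determines a projection by a further computable procedure.

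The main obstacle, and the step I expect to take the most work, is the concrete design of $K$ together with the post-processor. The naive choice $K:=A\cap\overline B(0,N)$ with $N$ a function of $x$ alone fails because $K$ can be empty when $A$ sits far from the origin, and because the closest point of $K$ to $x$ need not be the true projection onto $A$. I expect the right construction to pad $K$ with auxiliary computable material that forces nonemptiness, and to shape the output so that either it is directly a projection, or else the geometric information it carries about the location of $A$ is sufficient, when combined with the negative name of $A$, for a projection to be identified without a second oracle call.
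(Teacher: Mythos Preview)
Your argument for (2) is correct and is essentially the paper's (with the ball centered at $x$ rather than the origin, which is immaterial).

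For (3) there is a genuine gap. Your positive half names $K=\overline{\{a_i:d(x,a_i)<R\}}=\overline{A\cap B(x,R)}$, while your negative half names $A\cap\overline B(x,R)$. These need not coincide: a point $y\in A$ with $d(x,y)=R$ that is not approached from inside $B(x,R)$ by points of $A$ (e.g.\ an isolated point of $A$ sitting exactly on the sphere) belongs to the second set but not the first. The resulting pair is then not a valid $\kappa_{\IR^n}$-name, and the realizer of $\ProjK_{\IR^n}$ is under no obligation to behave sensibly on it. The paper handles exactly this issue by adjoining the entire boundary sphere, setting
\[
L=\big(A\cap\overline B(0,M+N)\big)\cup\partial B(0,M+N)=\overline{A\cap B(0,M+N)}\cup\partial B(0,M+N),
\]
so that both descriptions agree; the added sphere lies outside the projection set and does not interfere.

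For (1) you do not yet have a proof: you correctly diagnose why the naive truncation fails, but the ``padding plus post-processor'' plan is only a hope, and it is not clear how any compact $K$ produced without access to an upper bound on $d(x,A)$ could encode a true projection point recoverable by a single further computable map. The paper's solution is quite different from your sketch. It first translates the plane so that $x$ becomes the origin, and then applies the computable homeomorphism $t\mapsto \arctan(d(t,0))\,\frac{t}{d(t,0)}$ from $\IR^n$ onto $B(0,\frac\pi2)$. The key point is that $d(f(t),0)=\arctan(d(t,0))$, so the \emph{order} of distances from $x$ is preserved even though the distances themselves are distorted. Setting $H=f(A-x)\cup\partial B(0,\frac\pi2)$ gives a compact set with a $\Pi^0_1$ description, hence a $\KK_-$-name, and projections of $0$ onto $H$ are exactly the $f$-images of projections of $x$ onto $A$. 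The post-processor $y\mapsto f^{-1}(y)+x$ uses $x$ from the original input, which is why only $\leqW$ (not $\leqsW$) is obtained.
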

\begin{proof}
The inverse reductions of Fact \ref{kridc} can be obtained as follows.

We first deal with the positive representation. According to Remark
\ref{rem:psi+}, let $A:=\overline{\set{a_n}{n\in\IN}}\in \AA_+(\IR^n)$ and $x
\in X$. By \cite[Lemma 5.1.7]{Wei00} we can compute $d(x,A)$ as an element of
$\IR_>$, hence we can determine a (natural) $M>d(x,A)$. Given $x$ we can also
determine an upper bound $N\in\IN$ for $d(0,x)$. Let $K:=\overline{A\cap
B(0,M+N)}$. Using Remark \ref{compeucl}, and since clearly $K \subseteq
\overline B(0,M+N)$, it suffices to compute a dense sequence in $K$. This is
not difficult, starting from the positive information for $A$: we list all
points in $\set{a_n}{n\in\IN}$ with distance from $0$ strictly less than
$M+N$. Notice that all projection points of $x$ onto $A$ belong to $K$.
Obviously, these points also are projections points of $x$ onto $K$, so that
an application of $\ProjK^+_{\IR^n}$ to this new set releases a correct
result.\smallskip

We now deal with the total representation. Let then $A\in\AA(\IR^n)$ be
given. We want to compute, as a suitable input for $\ProjK_{\IR^n}$, some
compact $L$ with total information such that the projections points of $x$
onto $L$ should be also projection points of $x$ onto $A$. However, we cannot
set $L:=K$ (with $K$ the same of the previous case). This is because the
possible elements in $A \cap\partial B(0,M+N)$ that are not accumulation
points of the dense set enumerated in $K$ do not belong to $K$, but they are
inevitably preserved by the negative information on $A$ and on
$\IR^n\setminus\overline B(0,M+N)$. Thus, the two descriptions needed to
provide the total information of $K$ can fail to be coherent. To obtain
consistent information for both types of information, we add to $K$ the whole
set $\partial \overline B(0,M+N)$. Therefore we define $L$ to be
$$
\overline{A \cap B(0,M+N)} \cup \partial B(0,M+N) =
(A \cap \overline B(0,M+N)) \cup \partial B(0,M+N).
$$
The left hand term of the equation guarantees that a $\psi^+_{\IR^n}$-name of
$L$ can be effectively obtained, while the right hand side guarantees the
same with respect to a $\psi^-_{\IR^n}$- name. Finally, use Remark
\ref{compeucl} and the fact that $L\sbsq \overline B(0,M+N)$ to obtain a
$\kappa_{\IR^n}$-name of $L$ as a member of $\KK(\IR^n)$.\smallskip

We now consider the negative representation with the goal of showing that
$\ProjC^-_{\IR^n} \leqW \ProjK^-_{\IR^n}$. We make use of the homeomorphism
$f$ between $\IR^n$ and the open ball $B(0,\frac\pi2)$ defined by
$$
f(t)=
\begin{cases}
\arctan(d(t,0)) \frac t{d(t,0)} & \text{if $t \neq 0$;}\\
0 & \text{if $t=0$.}
\end{cases}
$$
We claim that $f$ is computable. The critical points are the vectors $t$
close to 0, but we can handle them as follows: until the test
$\arctan(d(t,0))>0$ fails and the parallel test $\arctan(d(t,0))<2^{-i}$
succeeds, we let $f(t)[i]=0$. Notice in fact, that for all $t\in\IR^n$
(including 0), $d(f(t),0)= \arctan(d(t,0))$. Analogously, one can prove that
$f^{-1}$ is also computable.

Now suppose we are given $(x,A) \in \dom(\ProjC^-_{\IR^n})$. We compute a
compact set $H \in \KK_-(\IR^n)$ as follows. The main idea is to use $f$ to
rescale $A$ within the compact $\overline B(0,\frac\pi2)$. However, the
function $f$ produces unavoidable metric distortions, as $f$-images get
closer to each other the more the original points are far from the origin.
Hence, projections points of $f(x)$ onto $f(A)$ do not necessarily correspond
to $f$-images of the projection points of $x$ onto $A$. To solve this
problem, we first translate the space so that $x$ becomes the origin: in this
way, the order relationships between distances from $x$ are preserved by $f$.
To take into account that possible \lq\lq infinity points\rq\rq\ of $A$ get
mapped to points on $\partial B(0,\frac\pi2)$ we add the whole $\partial
B(0,\frac\pi2)$ to our compact set. Therefore we set
\begin{align*}
 H & := f(A-x) \cup \partial B\Big(0,\frac\pi2\Big) \\
 & = \set{y \in \IR^n}{d(y,0) \leq \frac\pi2 \land \left(d(y,0) < \frac\pi2 \To f^{-1}(y)+x\in A\right)}.
\end{align*}
The second line provides a $\Pi^0_1$ definition of $H$ with $A$ and $x$ as
parameters, and hence (a name for) $H \in \AA_-(\IR^n)$ is computed from (a
name for) $x$ and $A \in \AA_-(\IR^n)$. Since $H \subseteq \overline B(0,\frac\pi2)$,
by Remark \ref{compeucl} we have $H \in\KK_-(\IR^n)$.

Since $d(t,x)= d(t-x,0)$ and $d(f(t-x),0) = \arctan(d(t-x,0))$, the
monotonicity of $\arctan$ implies that $d(f(t-x),0) \leq d(f(t'-x),0)$ if and
only if $d(t,x) \leq d(t',x)$ for all $t,t' \in \IR^n$. Thus the members of
$\ProjK^-_{\IR^n}(0,H)$ are exactly those of the form $f(t-x)$ for some $t
\in \ProjC^-_{\IR^n}(x,A)$. Therefore from $y \in \ProjK^-_{\IR^n}(0,H)$ we
can compute $f^{-1}(y)+x \in \ProjC^-_{\IR^n}(x,A)$. Notice that we are using
$x$ (which is part of the original input) in this final computation, so that
we do not prove $\ProjC^-_{\IR^n} \leqsW \ProjK^-_{\IR^n}$.
\end{proof}

Since we are interested mainly in projections in Euclidean spaces, Theorem
\ref{CequivK} allows us to concentrate on operators for closed sets.

The proof of the next theorem shows that we can obtain upper bounds for all
 three exact projection operators by using essentially the same argument.

\begin{theorem}\label{wlcupperbound}
\begin{enumerate}
\item For $n \geq 1$, $\ProjC^-_{\IR^n}$ and $\ProjC^+_{\IR^n}$ are non
    deterministically limit computable, that is
    $\ProjC^-_{\IR^n},\ProjC^+_{\IR^n}\leqsW\BWT_\IR$.
\item For $n \geq 1$, $\ProjC_{\IR^n}$ is non deterministically computable,
    that is
$$\ProjC_{\IR^n}\leqsW\WKL.$$
\end{enumerate}
\end{theorem}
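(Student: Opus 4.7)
The plan is to establish each upper bound for the compact projection operators $\ProjK^-_{\IR^n}$, $\ProjK^+_{\IR^n}$, and $\ProjK_{\IR^n}$, and then transfer it to the closed operators via Theorem~\ref{CequivK}; this transfer is legitimate even where Theorem~\ref{CequivK} gives only $\equivW$ rather than $\equivsW$, because both $\BWT_\IR$ and $\WKL$ are cylinders, so $\leqW$ and $\leqsW$ coincide for reductions to them. Throughout I fix an input $(x, K)$ with the relevant representation of $K$ and, by Remark~\ref{compeucl}, a computable $N$ with $K \subseteq \overline B(0, N)$.

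Part~(2) is the cleanest: with total information on $K$, the distance $d(x, K)$ is a computable real. The set
\[
P := \{\, y \in K : d(x, y) \leq d(x, K) \,\}
\]
of projection points then has a $\Pi^0_1$ definition from the negative component of the representation of $K$ together with the computable value $d(x, K)$, giving $P \in \AA_-(\IR^n)$; combined with $P \subseteq \overline B(0, N)$ this yields $P \in \KK_-(\IR^n)$, and a single call to $\K_{\IR^n} \equivsW \WKL$ selects a projection point.

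For part~(1) with positive information, Remark~\ref{rem:psi+} supplies a sequence $(a_n)_n \subseteq K$ dense in $K$. I would set $y_k := a_{m_k}$, where $m_k \leq k$ approximately minimizes $d(x, a_m)$ within error $2^{-k}$ (all distances are computable from the input), obtaining a bounded sequence in $K$ with $d(x, y_k) \to \inf_n d(x, a_n) = d(x, K)$. Closedness of $K$ and continuity of $d(x, \cdot)$ then guarantee that every accumulation point of $(y_k)_k$ lies in $K$ and realizes the minimum distance, so a single call to $\BWT_{\IR^n} \equivW \BWT_\IR$ returns a projection point.

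The main obstacle is part~(1) with negative information, since that representation supplies no points of $K$ directly. My plan is a grid-refinement construction of a bounded sequence $(z_s)_s \subseteq \overline B(0, N)$ whose every accumulation point is a projection point. At stage $s$ I tile $\overline B(0, N)$ with closed cubes of side $2^{-s}$ and declare a cube \emph{alive at stage $s$} if the first $s$ open balls enumerated by the negative representation of $K$ do not yet cover it, a condition computable by a terminating sub-grid check (if the cube really is covered, compactness makes the check succeed in finitely many steps, otherwise it never does). I let $z_s$ be the center of an alive cube minimizing $d(x, \cdot)$, with ties broken lexicographically. Two observations then close the argument: any cube meeting $K$ is alive at every stage, so that $d(x, z_s) \leq d(x, K) + O(2^{-s})$; and any cube disjoint from $K$ is, by compactness, covered by finitely many of the $B_{k_i}$ and therefore eventually dies. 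Together these force every accumulation point of $(z_s)_s$ to lie in $K$ and be at distance $d(x, K)$ from $x$, so a single call to $\BWT_{\IR^n}$ yields a projection point.
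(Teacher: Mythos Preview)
Your argument for part~(2) is essentially the paper's own proof. For part~(1), however, you take a genuinely different route: the paper first applies $\lim$ to upgrade the negative or positive representation of $A$ to total information (citing known results that the conversions $\AA_-(\IR^n) \to \AA(\IR^n)$ and $\AA_+(\IR^n) \to \AA(\IR^n)$ are limit computable), and then invokes the part~(2) argument, obtaining a reduction to $\WKL * \lim \equivW \BWT_\IR$. You instead construct, directly from the given representation, a bounded sequence whose accumulation points are precisely the projection points, and call $\BWT_{\IR^n}$ once. The paper's route is more modular and reuses existing machinery; yours is more self-contained and makes the role of $\BWT$ operationally explicit.

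There is, however, a small but real gap in your negative-information case. You define a cube to be alive at stage~$s$ if it is not covered by $\bigcup_{i<s} B_{k_i}$ and claim this is ``computable by a terminating sub-grid check'', yet in the same sentence you observe that the check never terminates when the cube is not covered. That is only a semi-decision procedure for \emph{covered}, so it does not compute aliveness as you need. The repair is routine: either (i)~redefine a cube to be alive at stage~$s$ if it is not contained in any \emph{single} ball $B_{k_i}$ with $i<s$ (this is decidable over rational data by comparing the farthest corner of the cube to the ball's radius, and it suffices because cubes shrink with $s$ while every point of $\IR^n \setminus K$ lies in some enumerated ball), or (ii)~run your coverage check for only $s$ steps and declare the cube alive if it has not yet returned. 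Either way, cubes meeting $K$ remain alive forever and every neighborhood disjoint from $K$ eventually contains no alive cube centers, so the remainder of your argument goes through unchanged.
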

\begin{proof}
We first show (2). Given $x\in\IR^n$ and $A\in\mathcal A(\IR^n)$ we can
compute $d(x,A)\in\IR$ by \cite[Lemma 5.1.7]{Wei00}. We use this distance to
compute first $C:=\partial B(x,d(x,A))$ as an element in $\AA_-(\IR^n)$, and
then $A\cap C\in\AA_-(\IR^n)$. This set obviously consists precisely of all
projection points of $x$ onto $A$.

We use then an upper bound $N$ of $d(x,0)$ and an upper bound $M$ of $d(x,A)$
to translate $A\cap C$ into an element of $\KK_-(\IR^n)$: it holds in fact that
$A\cap C\sbsq C\sbsq\overline B(0,N+M)$.

Finally, to determine a projection point of $x$ onto $A$, it suffices to
select a point from this compact set. This is the only non computable step in
the construction, but it is non deterministically computable by \cite[Theorem
2.10]{BG11}. This shows $\ProjC_{\IR^n} \leqW \WKL$, and $\ProjC_{\IR^n}
\leqsW \WKL$ follows because $\WKL$ is a cylinder.\smallskip

When $A$ is not provided with total information, we can initially use $\lim$
to obtain the total information about $A$. For the input $A\in\AA_+(X)$, this
follows by \cite[Proposition 4.2]{BG09}. For the input $A\in\AA_-(X)$ this
follows by \cite[Proposition 4.5]{BG09} (since $\IR^n$ is effectively locally
compact). The remainder of the process remains unaltered. Therefore, the
negative and the positive projection operators can be simulated by composing
a limit computable procedure with a non deterministically computable one,
i.e., they are non deterministically limit computable.

By \cite[Corollary 11.19]{BW} this means that $\ProjC^-_{\IR^n},
\ProjC^+_{\IR^n} \leqW \BWT_\IR$ and again we obtain $\ProjC^-_{\IR^n},
\ProjC^+_{\IR^n} \leqsW \BWT_\IR$ because $\BWT_\IR$ is a cylinder (see
\cite[Corollary 11.13]{BW}).
\end{proof}

\subsection{Exact negative projection operators}

In the previous section we have seen that $\ProjC^-_{\IR^n}\leqsW\BWT_\IR$.
But is this reduction in fact an equivalence? This is indeed the case for $n
\geq 2$ as the following result shows:

\begin{theorem}\label{BWTRK-}
$\BWT_\IR \leqsW \ProjC^-_{\IR^n}$ for $n \geq 2$.
\end{theorem}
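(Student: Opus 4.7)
The plan is to prove the case $n=2$ directly; the statement for $n\geq 3$ then follows by padding with zero coordinates and invoking the computability of the natural isometric embedding $\IR^2\hookrightarrow\IR^n$. Since $\BWT_\IR$ is strongly Weihrauch equivalent to the Bolzano--Weierstra\ss{} operator on $[0,1]$ by affine rescaling of any bounded sequence, the task reduces to constructing, uniformly from a sequence $(x_k)\in[0,1]^\IN$, a pair $(y_0,A)\in\IR^2\times\AA_-(\IR^2)$ such that every element of $\ProjC^-_{\IR^2}(y_0,A)$ computably encodes an accumulation point of $(x_k)$.

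The geometric scaffold I have in mind uses the computable embedding $\phi\colon[0,1]\to\IR^2$, $\phi(q)=(\cos(\pi q),\sin(\pi q))$, of $[0,1]$ onto the upper unit semicircle of $S^1$, together with source $y_0=(0,0)$. For each $k$ set $a_k=(1+2^{-k})\phi(x_k)$, which lies just outside the unit circle at distance $1+2^{-k}$ from $y_0$; by continuity of $\phi$, every Hausdorff limit point of $(a_k)$ in $\IR^2$ is of the form $\phi(q)$ for some accumulation point $q$ of $(x_k)$. Were $A=\overline{\{a_k:k\in\IN\}}$ directly $\psi^-_{\IR^2}$-computable, the reduction would be immediate: the unique points of minimum distance $1$ from $y_0$ in $\overline{\{a_k\}}$ are exactly the accumulation-point images $\phi(q)$ (all the $a_k$ being strictly farther), and the computable inverse $\phi^{-1}$ would recover $q$ from any projection point.

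The main obstacle is precisely the representation: the closure of a computable sequence of points in $\IR^2$ is in general not a member of $\AA_-(\IR^2)$, as its complement is only $\Sigma^0_2$-open while $\psi^-_{\IR^2}$ demands a $\Sigma^0_1$ description of the complement. I propose to replace $\overline{\{a_k\}}$ by a set of the form $A=\{y\in\IR^2:|y|\geq 1\}\setminus U$, where $U$ is a computably enumerable union of open sets in the exterior of the unit disk designed so that (i) every $a_k$ is removed from $A$, (ii) for every accumulation point $q$ of $(x_k)$ the image $\phi(q)$ remains in $A\cap S^1$, and (iii) any projection point of $y_0$ onto $A$ necessarily corresponds to an accumulation-point image via $\phi^{-1}$. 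The technical heart of the argument will be the joint tuning of the centers and radii of the open regions composing $U$: I plan to use the Cauchy-name structure of each $x_k$ to center the $k$-th carving at a slightly perturbed location relative to $\phi(x_k)$, with radius shrinking fast enough in $k$ that fast-converging subsequences of $(x_k)$ cannot accidentally swallow accumulation-point images on $S^1$, while still ensuring that every $a_k$ is deleted and that the infimum distance $1$ from $y_0$ is attained only at accumulation-point images. The restriction $n\geq 2$ is essential here, since a direction normal to $S^1$ is needed to allow carvings that reach the $a_k$ outside the unit disk while remaining tangentially thin enough to preserve accumulation-point images on the circle itself.
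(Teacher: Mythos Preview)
Your geometric scaffold---embed $[0,1]$ in the upper semicircle, place the $k$-th sample at radius $1+2^{-k}$, and project from the origin---is exactly right, and it is precisely what the paper uses for the \emph{positive} case $\ProjC^+_{\IR^2}$ (Proposition~\ref{BWTRK+}), where the closure $\overline{\{a_k\}}$ is directly available in $\AA_+(\IR^2)$.  You have also correctly identified why this fails for $\AA_-$.  The gap is in your proposed repair.

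Your conditions (ii) and (iii) together force $A\cap S^1$ (on the upper arc) to coincide \emph{exactly} with $\{\phi(q):q\text{ is an accumulation point of }(x_k)\}$: by (ii) every such $\phi(q)$ is in $A$, so $d(y_0,A)=1$ and the projection set is $A\cap S^1$; by (iii) nothing else lies there.  But the set of non-accumulation points of a sequence in $[0,1]$ is $\Sigma^0_2$-complete in general, so no $\Sigma^0_1$ open set $U$ can trace out exactly its $\phi$-image on $S^1$.  Weakening (ii) to ``some accumulation point survives'' does not help your specific mechanism either: carvings centred near $\phi(x_k)$ and touching $S^1$ will remove neighbourhoods of exactly the points where the $x_k$ cluster, i.e.\ the accumulation-point images you are trying to keep; carvings that stay off $S^1$ leave the whole semicircle in $A$, so every point becomes a projection point.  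No choice of shrinking radii fixes this, because the rate at which a subsequence $x_{k_j}\to q$ approaches $q$ is not controlled by $k_j$.

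The paper's proof takes a genuinely different route that sidesteps this $\Pi^0_1$/$\Pi^0_2$ mismatch.  It first replaces $\BWT_\IR$ by $\WKL'$: the input is a \emph{sequence} of trees $(T_n)_n$ converging to an infinite tree $T$, and one must find a path through $T$.  The closed set is built by removing, in each direction $\iota(p)$, an initial segment of the ray whose length is determined by a computable order-embedding $h\colon \IN^{<\IN}\to[1,2]$ applied to the finite string recording how many times each level of the tree has changed so far.  The point is that these change-counters eventually stabilise, and the lexicographic structure of $h$ guarantees that the supremum of the radii along a genuine path stays strictly below the radius assigned to any non-path direction.  This is the missing idea: one must use the \emph{convergence} of the input (the $\lim$ hidden in the jump) to produce a $\Pi^0_1$ description, rather than try to read off the $\Pi^0_2$ accumulation set directly.
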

\begin{proof}
Recall that by \cite[Corollary 11.7]{BW}, $\BWT_\IR \equivsW \WKL'$. Hence we
can substitute in the proof $\BWT_\IR$ by $\WKL'$. Moreover, it suffices to
work with $n=2$ because the results for $n>2$ follow by transitivity of
$\leqsW$ as $\ProjC^-_{\IR^2} \leqsW \ProjC^-_{\IR^n}$.

Fix the usual (and computable, by \cite[Lemma 7.1]{BGH15}) embedding $\iota:
\Can \to [0,1]$. Moreover, let $h: \IN^{<\IN} \to [1,2]$ be computable and
such that $w <_{lex} w'$ iff $h(w) < h(w')$ for every $w,w' \in \IN^{<\IN}$.

Throughout this proof it is convenient to represent points of $\IR^2$ in
polar coordinates (which we will write $(r, \alpha)$). This does not cause
any problem, because we use only points with radial coordinate not smaller
than $1$ and angular coordinate in the interval $[0,1]$: for such points both
directions of the conversion between Cartesian and polar coordinates are
computable.

Without loss of generality, we are given as input a sequence of trees
$(T_n)_n$ converging to an infinite binary tree $T$ and we want to find,
using $\Proj^-_{\IR^2}$, an infinite path in $T$. To achieve this goal we
compute a closed subset $B$ of $A = \set{(r,\alpha)}{1 \leq r \leq 2 \land
\alpha \in \iota(\Can)} \in \AA^-(\IR^2)$ such that if $(r, \alpha) \in
\ProjC^-_{\IR^n}(0,B)$, then $\iota^{-1}(\alpha)$ is an infinite path in $T$.
$B$ is defined as the intersection of closed sets $B_s$.

To describe the $B_s$, for each $w \in 2^{<\IN}$ and $r_0 \geq 1$, let us
denote by $B_{w,r_0}$ the closed set $A \setminus \set{(r,\alpha)}{r < r_0
\land w \prec \iota^{-1}(\alpha)} \in \AA^-(\IR^2)$ (here, as usual, $w \prec
p$ denotes that the finite binary string $w$ is an initial segment of $p \in
\Can$). In words, $B_{w,r_0}$ is obtained by removing from $A$ the inner
slice up to $r_0$ in the $w$-direction.

At stage $s$, for every $k \leq s$ we let $t_k(s)$ be the cardinality of the
set
\[
\set{n<s}{T_n \cap 2^k \neq T_{n+1} \cap 2^k}.
\]
We then define, for every $k \leq s$,
\begin{align*}
 r_k(s) & = h(\langle t_0(s), \dots, t_{k-1}(s), t_k(s) \rangle), \text{ and}\\
 r_k^+(s) & = h(\langle t_0(s), \dots, t_{k-1}(s), t_k(s)+1 \rangle).
\end{align*}
Eventually, we let
\[
B_s = \bigcap_{k \leq s} \big(\bigcup_{w \in 2^k \cap T_s} B_{w,r_k(s)} \cup \bigcup_{w \in 2^k \setminus T_s} B_{w,r_k^+(s)}\big).
\]

In this way we compute $B = \bigcap_s B_s \in \AA^-(\IR^2)$ and we need to
show that if $(r, \alpha) \in \Proj^-_{\IR^2} (0,B)$  then $\iota^{-1}
(\alpha)$ is a path in $T$.

Since $(T_n)_n$ converges, for every $k$ the sequence $(t_k(s))_s$ is
non-decreasing and eventually takes a constant value $t_k$. Therefore the
sequences $(r_k(s))_s$ and $(r_k^+(s))_s$ stabilize at $r_k = h(\langle t_0,
\dots, t_{k-1}, t_k \rangle)$ and $r_k^+ = h(\langle t_0, \dots, t_{k-1}, t_k
+1\rangle)$ respectively.

If $p \in [T]$ then the ray starting at $0$ and moving in direction
$\iota(p)$ meets $B$ at distance $\sup \set{r_k}{k \in \IN}$ from $0$. To see
this notice first that if $t_k(s')>t_k(s)$ then $r_k(s') = h(\langle t_0(s'),
\dots, t_{k-1}(s'), t_k(s') \rangle) \geq h(\langle t_0(s), \dots,
t_{k-1}(s), t_k(s) +1 \rangle) = r_k^+(s)$. Thus, even when for some $s \geq
k$ with $p \restriction k \notin T_s$ we deleted the ray in direction
$\iota(p)$ up to $r_k^+(s)$, at some later stage $s'$ (such that $p
\restriction k \in T_{s'}$ and so $t_k(s')>t_k(s)$) the deletion up to
$r_k(s') \leq r_k$ superseded it.

If instead $p \notin [T]$ and $\ell$ is least such that $p \restriction \ell
\notin T$ then the ray starting at $0$ and moving in direction $\iota(p)$
meets $B$ at distance $\geq r_\ell^+$ from $0$ (because at a stage $s$ such
that $T_s \cap 2^{\leq \ell} = T \cap 2^{\leq \ell}$ we delete the ray up
to distance $r_\ell^+$).

It thus suffices to check that $\sup \set{r_k}{k \in \IN} < r_\ell^+$ for
every $\ell$. Indeed we have
\begin{multline*}
\sup \set{r_k}{k \in \IN} = \sup \set{h(\langle t_0, \dots, t_k \rangle)}{k \in \IN} \leq \\
h(\langle t_0, \dots, t_\ell, t_{\ell+1} +1 \rangle) < h(\langle t_0, \dots, t_\ell +1 \rangle) = r_\ell^+.
\end{multline*}
Thus every point in $\Proj^-_{\IR^2} (0,B)$ is in direction $\iota(p)$ for
some $p \in [T]$, as required.
\end{proof}

\begin{corollary}\label{equiv-BWT}
$\ProjC^-_{\IR^n}\equivsW\BWT_\IR$ for $n \geq 2$.
\end{corollary}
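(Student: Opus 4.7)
The plan is essentially to observe that this corollary is an immediate consequence of two results already proved in the preceding pages, combined with the transitivity and antisymmetry of $\leqsW$ on Weihrauch degrees.

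First, I would invoke Theorem \ref{wlcupperbound}(1), which states that $\ProjC^-_{\IR^n} \leqsW \BWT_\IR$ holds for every $n \geq 1$. In particular it holds for $n \geq 2$, giving one direction of the desired equivalence. Second, I would invoke Theorem \ref{BWTRK-}, which provides the reverse reduction $\BWT_\IR \leqsW \ProjC^-_{\IR^n}$ precisely in the range $n \geq 2$. Since both reductions are strong, combining them yields $\ProjC^-_{\IR^n} \equivsW \BWT_\IR$ for $n \geq 2$, which is the statement of the corollary.

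There is genuinely no obstacle here: all the work has been done in the proof of Theorem \ref{BWTRK-}, where an infinite binary tree is coded into a closed subset of $\IR^2$ whose projection points from the origin correspond to infinite paths. The upper bound in Theorem \ref{wlcupperbound}(1) exploits the fact that the total closure information on $A$ can be manufactured from negative information using $\lim$, and that the selection of a projection point from the resulting compact set is non deterministically computable; together these give the $\BWT_\IR$ upper bound because $\BWT_\IR \equivsW \WKL * \lim$. So the only task in proving the corollary is to cite these two theorems and conclude.
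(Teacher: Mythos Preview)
Your proposal is correct and matches the paper's own proof exactly: the paper simply cites Theorem~\ref{wlcupperbound}(1) for the direction $\ProjC^-_{\IR^n} \leqsW \BWT_\IR$ and Theorem~\ref{BWTRK-} for the reverse direction, which is precisely what you do. Your additional explanatory remarks about why those theorems hold are accurate but not required for the corollary itself.
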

\begin{proof}
By Theorem \ref{wlcupperbound}.(1) and Theorem \ref{BWTRK-}.
\end{proof}

For case $n=1$ we do not obtain the full power of $\BWT_\IR$. We can prove
however that a characterization for the one dimensional case can be found in
terms of $\BWT_2$. As a preliminary result, we prove:

\begin{proposition}\label{negativeIR}
$\ProjC^-_\IR\leqW\LLPO*\lim$.
\end{proposition}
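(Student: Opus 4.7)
The plan has two stages. First, I would use $\lim$ to upgrade the negative information on $A$ to total information on the same set; second, with total information in hand, a single call to $\LLPO$ suffices to pick the right projection point among the (at most) two candidates available in one dimension.

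For the first stage, \cite[Proposition 4.5]{BG09}, already invoked in the proof of Theorem \ref{wlcupperbound}, gives that the identity map $\AA_-(\IR) \to \AA(\IR)$ is $\lim$-reducible, since $\IR$ is effectively locally compact. Bundling this with the identity on the $x$-coordinate, I obtain a map $g \leqW \lim$ with $g(x,A) = (x,A) \in \IR \times \AA(\IR)$.

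For the second stage, let $f : \sbsq \IR \times \AA(\IR) \mto \IR$ send $(x,A)$ to the set of projection points of $x$ onto $A$; I would show $f \leqW \LLPO$ as follows. Given $(x,A)$ with total information, $d(x,A) \in \IR$ is computable, being lower semicomputable from the negative component and upper semicomputable from the positive component of $A$. In dimension one, every projection point of $x$ onto $A$ lies in $\{y_-, y_+\}$, where $y_\pm := x \pm d(x,A)$, and at least one of $y_-, y_+$ belongs to $A$. Using the negative component of $A$ I would run two parallel semi-decisions for the predicates ``$y_- \notin A$'' and ``$y_+ \notin A$'', and transform them into an $\LLPO$-input $(p_0, p_1)$ by writing a single $1$ at the first stage (if any) a witness is produced, and $0$ everywhere else. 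Since at most one of $y_-, y_+$ fails to be in $A$, at most one non-zero entry is written overall, so $(p_0,p_1) \in \dom(\LLPO)$; given $i \in \LLPO(p_0,p_1)$ the corresponding candidate $y_\pm$ satisfies $p_i = 0^\IN$, hence it lies in $A$ and is output.

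Altogether the algorithm factors as $f \circ g$ with $g \leqW \lim$ and $f \leqW \LLPO$, which by the definition of the compositional product yields $\ProjC^-_\IR \leqW \LLPO * \lim$. The one point that requires care is producing an $\LLPO$-input that meets the restrictive domain condition of at most one non-zero entry across both sequences; this is handled by the ``write $1$ only once'' normalisation above, so I do not expect any serious obstacle.
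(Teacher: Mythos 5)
Your proposal is correct, but it takes a genuinely different route from the one in the paper. The paper avoids the negative-to-total upgrade entirely: it applies an $\arctan$-compactification centered at $x$, yielding a set $B=\arctan(A-x)\cup\{-\tfrac\pi2\}\cup\{\tfrac\pi2\}\in\AA_-(\IR)$, then directly extracts the nearest left and right neighbours $\ell\in\IR_>$ and $r\in\IR_<$ of $0$ in $B$ from the negative information alone (this one-sided extraction is what gets promoted to exact reals by $\lim$), and finally applies $\LLPO$ to pick the neighbour of smaller magnitude. You instead first use $\lim$ to upgrade $A\in\AA_-(\IR)$ to $A\in\AA(\IR)$ via \cite[Proposition 4.5]{BG09} (a step the paper itself invokes in Theorem~\ref{wlcupperbound}), and then observe that the total-information projection lies below $\LLPO$; your second stage is exactly the content (and essentially the proof) of Proposition~\ref{ProjCLLPO}, which appears later in the paper. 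Both decompositions are valid. Your route is more modular, reducing the claim to two facts that are anyway present in the paper; the paper's route is more self-contained at this point in the exposition and sidesteps the need to compute the positive information for $A$, obtaining instead only the two specific one-sided bounds that are actually needed. One small caveat worth keeping in mind: when you assert that from $p_i=0^\IN$ it follows that the corresponding $y_\pm$ lies in $A$, you are implicitly relying on the semi-decision for \lq\lq$y_\pm\notin A$\rq\rq\ being complete, which holds because $A$ is closed and the excluded balls in the negative name cover the complement of $A$; this is fine, but it is the crux of the argument and deserves a sentence.
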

\begin{proof}
Analogously to the treatment of negative information in the proof of Theorem
\ref{CequivK}, given $x\in\IR$ and $A\in\AA_-(\IR)$ we can compute the set
$$B:=\arctan(A-x)\cup\{-\frac\pi2\}\cup\{\frac\pi2\}\in\AA_-(\IR).$$ Notice
that for such a set both
\begin{center}
$\ell:=\max\set{y\in B}{y\leq 0}$ and
$r:=\min\set{y\in B}{y\geq 0}$
\end{center}
always exist, which would not hold true in general for our original $A$.
Moreover $\ProjC^-_\IR(0,B) \sbsq \{\ell,r\}$. Since $A\neq\eps$,
$|y|<\frac\pi2$ for all $y\in\ProjC^-_\IR(0,B)$. More precisely, as in the
proof of Theorem \ref{CequivK}, the members of $\ProjC^-_\IR(0,B)$ are
exactly those of the form $\arctan(t-x)$ for some $t\in\ProjC^-_\IR(x,A)$.
Recalling that $B\in\AA_-(\IR)$ we can determine $\ell$ as an element of
$\IR_>$ and $r$ as an element of $\IR_<$. We then use $\lim \times \lim
\equivsW \lim$ to obtain $\ell,r\in\IR$. Let now denote as $f_0$ the function
mapping any given  $B\in\AA_-(\IR)$ for which the elements $\ell,r$ defined
as above exist to the pair $(\ell,r)\in\IR^2$. We have then just proved that
$f_0\leqsW\lim$.

Consider now the function $g_0:\IR^2\to\IR$ such that $g_0(z_0,z_1) \in
\{z_0,z_1\}$ and $|g_0(z_0,z_1)| = \min\{|z_0|, |z_1|\}$. It is easy to see
that $g_0 \leqW \LLPO$ (an application of $\LLPO$ finds $i<2$ such that
$|z_i| \leq |z_{1-i}|$, then we use the input $(z_0,z_1)$ of $g_0$, which is still available by definition of $\leqW$, to recover the value of $z_i$).

Let now $y:=g_0(\ell,r)$. Then, in virtue of what observed above,
$\tan(y)+x\in\ProjC^-_\IR(x,A)$.

This shows that $\ProjC^-_\IR\leqW g_0\circ f_0$ (the transformation $(x,A)
\mapsto B$ was indeed computable uniformly in $(x,A)$ and notice also that,
by definition of $\leqW$, the original $x\in\IR$ is still available after the
application of $g_0\circ f_0$, hence it can be used to compute $\tan(y)+x$).

By definition of compositional product, $g_0 \circ f_0 \leqW \LLPO*\lim$ for
every $g_0 \leqW \LLPO$ and $f_0 \leqW \lim$. Therefore $\ProjC^-_\IR \leqW
\LLPO*\lim$.
\end{proof}

For the next result we need to use the Sierpinski space and its ordinary
admissible representation:

\begin{definition}[Sierpinski space]
The \emph{Sierpinski space} is given by the topology
$\Sie:=\{\{1\},\{0,1\}\}$  on the set $2:=\{0,1\}$.

As a represented space, the Sierpinski space is equipped with the
representation $\delta_\Sie(0^\IN)=0$ and $\delta_\Sie(p)=1$ for
$p\neq0^\IN$.
\end{definition}

In other words, $\LPO$ can be seen as the identity function
$\id_{\Sie,2}:\Sie\to\{0,1\}, i\mapsto i$, where the codomain is equipped
with the discrete topology.


\begin{lemma}\label{BWT2limneg}
$\BWT_2\times\lim\leqsW\ProjC^-_\IR$.
\end{lemma}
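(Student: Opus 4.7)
The plan is to transform the joint input $(b, (p_n)_n)$ of $\BWT_2 \times \lim$ (with $p^* := \lim_n p_n$) into a single $\ProjC^-_\IR$-instance $(0, A) \in \IR \times \AA_-(\IR)$ whose projection $y$ encodes both outputs: its sign identifies an accumulation point of $b$, and its absolute value determines $p^*$ via a fixed computable embedding.

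Fix a computable injection $\sigma \colon \Bai \to [1/2, 1]$ with computable inverse on Cauchy names (obtainable from a standard bijection of $\Bai$ with a computable subset of $[1/2,1]$). Let $N_0(s), N_1(s)$ count the $0$s, resp.\ $1$s, in $b \restriction s$, and set $N_i := \sup_s N_i(s) \in \IN \cup \{\infty\}$, so that $N_i = \infty$ exactly when $i$ is an accumulation point of $b$. The design target is that the closest point of $A$ to $0$ on the right half-line is $1 + \sigma(p^*) + 2^{-N_0}$, and on the left is $-(1 + \sigma(p^*) + 2^{-N_1})$. Then whichever side has $N_i = \infty$ wins, since the opposite carries a strictly positive extra $2^{-N_i}$; in the winning case $|y| = 1 + \sigma(p^*)$ with the appropriate sign, so the backward map $H$ reads off the sign to recover $\BWT_2(b) \in \{0,1\}$ and computes $p^* = \sigma^{-1}(|y|-1)$.

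The main obstacle is realising $A$ as an $\AA_-(\IR)$-computable closed set, given that $\sigma(p^*) + 2^{-N_i}$ is only $\Delta^0_2$-computable from $(b, (p_n))$; the intended limit points cannot be enumerated directly as singletons. The resolution is to take $A$ as the closure of a Cauchy-computable family of points $r_{s,m}^\pm$, indexed by a stage $s$ and an auxiliary index $m$, each obtained from $\pm(1 + \sigma(p_s) + 2^{-N_i(s)})$ by a carefully chosen outward shift depending on $m$. The shifts are designed so that (a) every individual $r_{s,m}^\pm$ lies at distance at least $1 + \sigma(p^*) + 2^{-N_i}$ from $0$ on its side, while (b) the family as a whole accumulates precisely at the targets $\pm(1 + \sigma(p^*) + 2^{-N_i})$. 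Since each $r_{s,m}^\pm$ is Cauchy-computable from $(b, (p_n))$, the countable union and its closure are $\AA_-(\IR)$-computable uniformly in the input. The heart of the argument is the construction of the shifts in (a), which must uniformly dominate the oscillation $|\sigma(p_s) - \sigma(p^*)|$ as $(p_n)$ approaches $p^*$ without spoiling the accumulation in (b); once these are in place, the closest-to-$0$ analysis and the decoding of $(\BWT_2(b), p^*)$ from $y$ are routine.
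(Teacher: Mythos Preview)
There is a genuine gap: the claim that ``the countable union and its closure are $\AA_-(\IR)$-computable'' is unjustified and, in this setting, false. Enumerating a dense family of points in a closed set is exactly the \emph{positive} representation (Remark~\ref{rem:psi+}); to place $A$ in $\AA_-(\IR)$ you must instead enumerate basic balls disjoint from $A$, i.e.\ semi-decide $x\notin A$. For your set the decisive ball is the open gap around~$0$, whose right endpoint is the target $1+\sigma(p^*)+2^{-N_0}$. Enumerating this gap requires \emph{lower} approximations to that target, but from a raw convergent sequence $(p_n)$ you obtain $\sigma(p^*)$ only as a limit (no one-sided information), and $2^{-N_i(s)}$ is monotone \emph{decreasing}, giving only upper bounds. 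The outward shifts you describe ensure each individual $r^\pm_{s,m}$ lies beyond the target, but they do nothing to manufacture the missing lower bounds on the target itself; without those the complement cannot be enumerated and the forward map into $\AA_-(\IR)$ fails. (The same construction would legitimately produce an element of $\AA_+(\IR)$, which is the content of Lemma~\ref{BWT2limpos}, not of the present lemma.)

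The paper's proof circumvents exactly this obstacle: it first replaces $\lim$ by the strongly equivalent $\id_{\Sie^\IN,\Can}$, so the second input is a Sierpi\'nski-space name~$q$ rather than a convergent sequence. The embedding $\iota$ then lifts to a computable $\iota_\Sie:\Sie^\IN\to\IR_<$, and by interleaving $q$ with $\Sort_\Sie(p)$ and $\Sort_\Sie(1-p)$ one directly obtains the two boundary values $\ell\in\IR_>$ and $r\in\IR_<$ with the \emph{correct one-sided} information. That is precisely what is needed to enumerate the open interval $(\ell-1,r+1)$ and hence to compute $A=\{x:x\leq\ell-1\lor x\geq r+1\}\in\AA_-(\IR)$. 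Your overall architecture---sign encodes $\BWT_2$, magnitude encodes the limit---is the same as the paper's, but the detour through $\Sie^\IN$ and $\Sort$ is the missing ingredient that makes the negative information actually available.
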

\begin{proof}
Consider the space $\Sie^\IN$. As $\id_{\Sie,2}\equivsW\LPO$, for the
identity function $\id_{\Sie^\IN,\Can}:\Sie^\IN\to\Can$ we find that
$\id_{\Sie^\IN,\Can}\equivsW\lim$, as obviously
$\id_{\Sie^\IN,\Can}\equivsW\widehat{\id_{\Sie,2}}$ and moreover
$\lim\equivsW\widehat\LPO$ (\cite[Lemma 6.3]{BG11a}, \cite[Theorem
6.7]{Weihrauch}). In the statement we can thus replace $\lim$ with
$\id_{\Sie^\IN,\Can}$.


Notice now that the computable embedding $\iota: \Can \to [0,1]$ we already
used in the proof of Theorem \ref{BWTRK-} gives naturally rise to a
corresponding computable embedding $\iota_\Sie:\Sie^\IN\to\IR_<$. Recall that
$\iota$ preserves the order on binary sequences, hence $\iota_\Sie$ does the
same.

Finally, observe that $\Sort_\Sie:\Can\to\Sie^\IN$, the lifted version of
$\Sort$ such that $\Sort_\Sie(p)\in\Sie^\IN$ coincides with
$\Sort(p)\in2^\IN$, is computable.

In the following, by notational abuse, we identify $\Sie^\IN$ with
$\dom(\delta_{\Sie^\IN})=\Bai$, that is, we will not distinguish a binary
sequence $(i_0,i_1,i_2,\dots)\in\Sie^\IN$ from any $\langle p_0, p_1, p_2,
\dots \rangle\in\Bai$ such that $\delta_{\Sie^\IN}(\langle p_0, p_1, p_2,
\dots \rangle) = (\delta_\Sie(p_0), \delta_\Sie(p_1), \delta_\Sie(p_2),
\dots) = (i_0, i_1, i_2, \dots)$. This produces no ambiguity for
$\id_{\Sie^\IN,\Can}$ and $\iota_\Sie$ that still remain single-valued,
whereas the single-valuedness of $\Sort_\Sie$ can be preserved by its
replacement with a computable realizer. For instance, we will see
$\iota_\Sie$ as defined by $\iota_\Sie(\langle p_0, p_1, p_2, \dots\rangle)
:= \iota((\delta_\Sie(p_0), \delta_\Sie(p_1), \delta_\Sie(p_2), \dots)$, and
$\Sort_\Sie(p)$ as being of the form $\langle p_0, p_1, p_2, \dots\rangle$
with $\Sort(p)=(\delta_\Sie(p_0), \delta_\Sie(p_1), \delta_\Sie(p_2),
\dots)$.

Now, given inputs $(p,q)\in\BWT_2\times\id_{\Sie^\IN,\Can}$, we compute
$\ell:=-\iota_\Sie(\langle\Sort_{\Sie}(p),q\rangle)\in\IR_>$ and
$r:=\iota_\Sie(\langle\Sort_{\Sie}(1-p),q\rangle)\in\IR_<$, where
$$\langle\Sort_{\Sie}(p),q\rangle:=\langle p_0, q_0, p_1, q_1, p_2, q_2,
\dots\rangle$$
for $\Sort_\Sie(p):=\langle p_0, p_1, p_2, \dots\rangle$ and
$q:=\langle q_0, q_1, q_2,\dots\rangle$. Since $\iota_\Sie(\Sort_\Sie(s))$
coincides with $\iota(\Sort(s))$ for all $s\in\Bai$, we notice that
$|\ell|\leq|r|$ if and only if only if $p$ contains infinitely many $0$ (in
this case indeed $\Sort(p)=0^\IN$), and $|\ell|\geq|r|$ if and only if $p$
contains infinitely many $1$ (in this case indeed $\Sort(1-p)=0^\IN$).

Given now $\ell\in\IR_>$ and $r\in\IR_<$, we can compute
$$A:=\set{x\in\IR}{x\leq \ell-1\vee x\geq r+1}\in\AA_-(\IR).$$
From $y\in\ProjC^-_\IR(0,A)\sbsq\{\ell-1,r+1\}$ we can use
$\iota^{-1}:\sbsq\IR\to\Can$ to compute $\langle\Sort_\Sie(p),q\rangle$ and
then $q\in 2^\IN$. Moreover, the sign of $y$ yields a valid answer to
$\BWT_2(p)$ (notice that the sign of $\ell-1, r-1$ is always decidable, as
they are necessarily different from $0$, which is not necessarily the case
for $\ell$ and $r$).
\end{proof}

Through the notion of jump that we recalled in Section \ref{ssec:milestones}
we are now able to characterize $\ProjC^-_\IR$ in terms of $\BWT_2$:

\begin{corollary}\label{BWT2Xlimneg}
$\ProjC^-_\IR\equivW\BWT_2\times\lim$.
\end{corollary}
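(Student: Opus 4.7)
One direction, $\BWT_2\times\lim\leqsW\ProjC^-_\IR$, is exactly Lemma \ref{BWT2limneg}. For the reverse $\ProjC^-_\IR\leqW\BWT_2\times\lim$, the plan is to refine the reduction supplied by Proposition \ref{negativeIR}, which already factored $\ProjC^-_\IR$ (up to $\leqW$) as $g_0\circ f_0$, with $f_0\leqsW\lim$ computing a pair $(\ell,r)\in\IR^2$ from $(x,A)$ and $g_0\leqW\LLPO$ selecting the coordinate of smaller absolute value. The pivotal observation is the identity $\BWT_2\equivsW\LLPO'$ recalled in \S\ref{ssec:milestones}: an input to $\LLPO'$ is by definition a Baire-convergent sequence whose limit lies in $\dom(\LLPO)$, so whenever $\LLPO$ must be applied after $\lim$, the sequence of approximations can be handed directly to $\LLPO'$, while any remaining data needed for the post-processing is recovered via a parallel call to $\lim$.

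Concretely, since $f_0\leqsW\lim$ there is a computable $R$ on Baire space such that for any name $s$ of the input $(x,A)$ the sequence $R(s)=(s_n)_n$ has as its limit a Cauchy name of $(\ell,r)$. Let $K$ be the computable map witnessing $g_0\leqW\LLPO$, so that for a Cauchy name $q$ of $(\ell,r)$ one has $i\in\LLPO(K(q))$ exactly when $|z_i|=\min\{|\ell|,|r|\}$. I would apply $K$ componentwise to $(s_n)_n$ to form $(K(s_n))_n$; by continuity of the computable $K$ this converges in Baire space to $K$ applied to the limit, which belongs to $\dom(\LLPO)$, so feeding the sequence to $\BWT_2\equivsW\LLPO'$ yields the desired $i$. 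In parallel, apply $\lim$ to $(s_n)_n$ itself to recover the Cauchy name of $(\ell,r)$, and then combine this with $i$ and with the original $x$ (still available in a $\leqW$ reduction) to output $\tan(\ell)+x$ or $\tan(r)+x$ accordingly---a correct projection point of $x$ onto $A$. Since only a single call each to $\BWT_2$ and to $\lim$, running in parallel, is used, this gives $\ProjC^-_\IR\leqW\BWT_2\times\lim$.

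The step meriting attention is the verification that $(K(s_n))_n$ genuinely lies in $\dom(\LLPO')$: pointwise Baire-space convergence follows from the continuity of the computable map $K$, and membership of the limit in $\dom(\LLPO)$ is automatic because $(\ell,r)$ is a valid input to $g_0$ by construction. I expect this to be the only real subtlety; the remainder is routine bookkeeping, and combining the two reductions yields $\ProjC^-_\IR\equivW\BWT_2\times\lim$.
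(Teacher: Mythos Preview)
Your approach is essentially the same as the paper's: both directions rely on Lemma \ref{BWT2limneg} for $\BWT_2\times\lim\leqsW\ProjC^-_\IR$ and on Proposition \ref{negativeIR} together with $\BWT_2\equivsW\LLPO'$ for the converse. The only difference is packaging: the paper simply invokes the general identity $f*\lim\equivW f'\times\lim$ (valid for arbitrary $f$) to pass from $\ProjC^-_\IR\leqW\LLPO*\lim$ to $\ProjC^-_\IR\leqW\LLPO'\times\lim\equivW\BWT_2\times\lim$, whereas you unfold this identity by hand in the particular case at hand.

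There is, however, a small technical gap in your unfolding. The pre-processing map $K$ witnessing $g_0\leqW\LLPO$ is a \emph{partial} computable function on $\Bai$, guaranteed to be defined only on names of valid inputs to $g_0$. The approximants $s_n$ produced by $R$ are merely elements of $\Bai$ converging to such a name; there is no reason $K(s_n)$ should be defined for every $n$, so the sequence $(K(s_n))_n$ need not exist, and your appeal to ``continuity of the computable $K$'' is not yet justified. The fix is standard: replace $K$ by a total computable extension $\tilde K$ (e.g.\ have the Type-2 machine pad its output whenever it would otherwise stall), which agrees with $K$ on $\dom(K)$. Then $\tilde K$ is continuous on all of $\Bai$, so $(\tilde K(s_n))_n$ converges to $\tilde K(\lim_n s_n)=K(\lim_n s_n)\in\dom(\LLPO)$, and the rest of your argument goes through. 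This is exactly the kind of detail absorbed into the proof of the general fact $f*\lim\equivW f'\times\lim$ that the paper cites; once you add this one line, your proof is correct.
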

\begin{proof}
This follows by Proposition \ref{negativeIR} and Lemma \ref{BWT2limneg},
since $\BWT_2\equivsW\LLPO'$ (\cite[Corollary 11.11]{BW}) and since for
generic multi-valued functions $f$ it holds $f*\lim\equivW f'\times\lim$.
\end{proof}

\subsection{Exact positive projection operators}

Quite surprisingly, for the projections with positive information for closed
sets we obtain the same characterizations obtained for the case of negative
information. We start with the dimensions $n \geq 2$ for which we are still
able to prove the equivalence with $\BWT_\IR$:

\begin{proposition}\label{BWTRK+}
$\BWT_\IR \leqsW \ProjC^+_{\IR^n}$ for $n\geq 2$.
\end{proposition}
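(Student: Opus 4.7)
My plan is to prove $\WKL' \leqsW \ProjC^+_{\IR^2}$, which combined with $\BWT_\IR \equivsW \WKL'$ (as used in Theorem \ref{BWTRK-}) and the trivial reduction $\ProjC^+_{\IR^2} \leqsW \ProjC^+_{\IR^n}$ (obtained by placing $\IR^2$ as the first two coordinates of $\IR^n$ and padding any dense sequence for $A$ with zeros) yields the statement. I borrow the setup of Theorem \ref{BWTRK-}: fix the computable embedding $\iota: \Can \to [0,1]$ and work in polar coordinates $(r, \alpha)$ with $r \geq 1$ and $\alpha \in [0,1]$, where both directions of the Cartesian/polar conversion are computable.

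Given as input a sequence of trees $(T_n)_n$ converging to an infinite binary tree $T$, I produce a $\psi_{\IR^2}^+$-name for a closed set $B \sbsq \IR^2$ by effectively enumerating, over all pairs $(s,w)$ with $w \in T_s$ and $|w| \leq s$, the polar-coordinate point $P_w := (1 + 2^{-|w|}, \iota(w 0^\IN))$. Since $T$ is infinite this produces infinitely many points and, by Remark \ref{rem:psi+}, describes the closed set $B$ as the closure of the enumerated sequence. Every $P_w$ has radius strictly greater than $1$, so any projection of $0$ onto $B$ must lie at radius exactly $1$ and hence arise as an accumulation point of $\{P_w\}$.

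The crux of the argument is to identify the accumulation points of $\{P_w\}$ at radius $1$ as precisely the points $(\cos \iota(p), \sin \iota(p))$ for $p \in [T]$. For $\supseteq$: given $p \in [T]$, for every $\ell$ one has $p \restriction \ell \in T_s$ for all sufficiently large $s$, so $P_{p \restriction \ell}$ is enumerated and $(P_{p \restriction \ell})_\ell$ converges to $(1, \iota(p))$. For $\subseteq$: if $P_{w_k} \to (1, \iota(p))$, then $|w_k| \to \infty$ (radius) and $w_k 0^\IN \to p$ (direction); taking $s_k \geq |w_k|$ with $w_k \in T_{s_k}$ forces $s_k \to \infty$, and then for every $\ell$ the equality $p \restriction \ell = w_k \restriction \ell \in T_{s_k}$ (for large $k$) yields $p \restriction \ell \in T$ by the convergence $T_n \to T$, whence $p \in [T]$. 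A path $p$ is finally recovered from any projection point by applying $\iota^{-1}$ to its angular coordinate, a step that depends only on the output and thus makes the reduction strong.

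The main obstacle is to rule out spurious accumulations at radius $1$ in directions not coming from $[T]$, since a transient membership $w \in T_s$ with $w \notin T$ does contribute $P_w$ to the enumerated sequence. The side condition $|w| \leq s$ in the enumeration rule is the essential safeguard: it forces $s_k \geq |w_k|$ to diverge in any candidate accumulation sequence, which in turn allows the convergence of $(T_n)_n$ to $T$ to identify the correct tree and filter out wrong directions.
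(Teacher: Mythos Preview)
Your proof is correct, but it takes a noticeably longer route than the paper's. You reduce from $\WKL'$ and adapt the tree-based construction of Theorem \ref{BWTRK-} to the positive setting, using the side condition $|w|\leq s$ to force witnesses $s_k\to\infty$ and thereby rule out spurious accumulation directions; this argument is sound.

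The paper, by contrast, works with $\BWT_\IR$ directly and needs no tree machinery at all. Given a bounded sequence $(a_n)_n$, it places $b_n=(1+2^{-n},\arctan(a_n))$ in polar coordinates and takes $A=\overline{\{b_n\}}$. Because the radii $1+2^{-n}$ are strictly decreasing to $1$, any accumulation point of $(b_n)$ must have radius $1$, and the angular coordinates along such a subsequence converge to $\arctan(c)$ for a cluster point $c$ of $(a_n)$. Thus the projection points of $0$ onto $A$ are exactly the points $(1,\arctan(c))$, and applying $\tan$ to the angular coordinate of any projection point recovers a value of $\BWT_\IR((a_n)_n)$. The whole construction fits in three lines.

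The difference is that positive information lets one \emph{enumerate} a sequence whose accumulation structure already encodes the answer; there is no need to simulate the deletion-based bookkeeping that was essential in the negative case. Your approach still works, and has the minor virtue of reusing the infrastructure of Theorem \ref{BWTRK-}, but the paper's direct argument is both shorter and conceptually cleaner.
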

\begin{proof}
We prove the statement for $n=2$. As before, the results for $n>2$ follow by
transitivity of $\leqsW$ as $\ProjC^+_{\IR^2}\leqsW\ProjC^+_{\IR^n}$. As in
the proof of Theorem \ref{BWTRK-}, also here it is convenient to represent
points of $\IR^2$ in polar coordinates. In this case we use only points with
radial coordinate not smaller than $1$ and angular coordinate in the interval
$[-\frac\pi2, \frac\pi2]$, so that again for our purposes both directions of
the conversion between Cartesian and polar coordinates are computable.

Let $(a_n)_n\in\dom(\BWT_\IR)$ be given as input. We want to find a cluster
point of this sequence. We consider the points $b_n = (1+2^{-n},
\arctan(a_n))$. Let now $A: = \overline{\set{b_n}{n\in\IN}} \in
\AA_+(\IR^2)$. Notice that $(1, \pm\frac\pi2) \notin A$ because $(a_n)_n$ is
bounded, while $(1,\alpha) \in \ProjC^+_{\IR^2}(0,A)$ if and only if
$\tan(\alpha)$ is a cluster point of $(a_n)_n$. Thus if $(r,\alpha) \in
\ProjC^+_{\IR^2} (0,A)$ then $r=1$ and $\tan(\alpha) \in \BWT_\IR ((a_n)_n)$.
\end{proof}

\begin{corollary}\label{equiv+BWT}
$\ProjC^+_{\IR^n} \equivsW \BWT_\IR$ for $n\geq 2$.
\end{corollary}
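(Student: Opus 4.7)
The plan is to obtain both reductions directly from results already established in the paper. One direction, namely $\ProjC^+_{\IR^n} \leqsW \BWT_\IR$, is just part~(1) of Theorem~\ref{wlcupperbound}, which was proved by first using $\lim$ to upgrade the positive information on $A$ to total information (via \cite[Proposition~4.2]{BG09}), then computing $d(x,A)\in\IR$, intersecting with $\partial B(x,d(x,A))$ to pin down the set of projection points as a closed subset of a computable compact ball, and finally selecting a point non-deterministically. Combining the limit-computable pre-processing with the non-deterministically computable selection gives the bound $\BWT_\IR$, which is a cylinder, so the reduction automatically strengthens to $\leqsW$.

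For the other direction I would simply invoke Proposition~\ref{BWTRK+}, which states exactly $\BWT_\IR \leqsW \ProjC^+_{\IR^n}$ for $n\geq 2$. There the key idea was to encode a bounded sequence $(a_n)_n$ in polar coordinates as a dense enumeration $(1+2^{-n},\arctan(a_n))$ of a closed set $A$ in $\AA_+(\IR^2)$, arranged so that the projection points of the origin onto $A$ are precisely the points of the form $(1,\alpha)$ with $\tan(\alpha)$ a cluster point of $(a_n)_n$.

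Putting these two reductions together via the transitivity of $\leqsW$ yields the equivalence $\ProjC^+_{\IR^n} \equivsW \BWT_\IR$. There is no real obstacle here: both halves have already been carried out in the preceding results, and the corollary is merely their conjunction. The proof therefore reduces to a single line citing Theorem~\ref{wlcupperbound}.(1) and Proposition~\ref{BWTRK+}.
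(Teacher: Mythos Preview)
Your proposal is correct and matches the paper's own proof exactly: the corollary is obtained by combining Theorem~\ref{wlcupperbound}.(1) with Proposition~\ref{BWTRK+}. Your additional recapitulation of how those two results were proved is accurate but unnecessary for the corollary itself.
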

\begin{proof}
By Theorem \ref{wlcupperbound}.(1) and Proposition \ref{BWTRK+}.
\end{proof}

For $n=1$, by reasoning analogously to the case of negative information, we obtain the same characterization in terms of $\BWT_2$. We start with the following result, which is an analoguous of Proposition \ref{negativeIR}:

\begin{proposition}\label{positiveIR}
$\ProjC^+_\IR\leq_W\LLPO*\lim$.
\end{proposition}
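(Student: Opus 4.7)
The plan is to follow the strategy of Proposition \ref{negativeIR} but assign a different role to $\lim$. In the negative case, $\lim$ is used merely to promote the one-sided approximations $\ell \in \IR_>$ and $r \in \IR_<$ to genuine reals. A direct analogue in the positive case seems problematic: from a dense sequence $(b_n)$ in a candidate set $B \subseteq \IR$ one cannot semi-decide $b_n \leq 0$, so computing the analogous $\ell \in \IR_<$ and $r \in \IR_>$ is not straightforward. I would therefore shift $\lim$ to an earlier step: use it to upgrade the positive representation of $A$ to the total representation, and then exploit the resulting decidability to reduce the remaining selection problem to $\LLPO$.

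Concretely, given $(x, A) \in \dom(\ProjC^+_\IR)$, I would first invoke the conversion $\AA_+(\IR) \to \AA(\IR)$ via $\lim$ provided by \cite[Proposition 4.2]{BG09} and already used inside the proof of Theorem \ref{wlcupperbound}(1). With total information on $A$ in hand, the distance $d := d(x,A) \in \IR$ is computable by \cite[Lemma 5.1.7]{Wei00}. In the one-dimensional setting the set of projection points of $x$ onto $A$ is contained in $\{x - d, x + d\}$, and at least one of the two candidates $y_0 := x - d$ and $y_1 := x + d$ must belong to $A$.

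To select a valid candidate I would use $\LLPO$. The negative information component of the total representation makes the predicate ``$y \notin A$'' semi-decidable uniformly in a computable real $y$, since $y \notin A$ iff $y$ lies in some open ball enumerated by the negative information. Let $p_i \in \Can$ be $0$ everywhere except possibly at the first stage $n$ at which this semi-decision procedure detects $y_i \notin A$, where we set $p_i(n) := 1$. Then $p_i = 0^\IN$ precisely when $y_i \in A$, and since at least one of $y_0, y_1$ lies in $A$, at most one of $p_0, p_1$ has a nonzero entry, so the premise of $\LLPO$ is satisfied. A call to $\LLPO(p_0, p_1)$ returns an index $i$ with $y_i \in A$, and since $|y_i - x| = d = d(x,A)$, the point $y_i$ is a legitimate projection. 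Packaging the first stage as a computable map followed by a single call to $\lim$, and the second stage as a computable map followed by a single call to $\LLPO$, yields $\ProjC^+_\IR \leqW \LLPO*\lim$ directly from the definition of the compositional product. The only subtlety to verify is that the original inputs (in particular $x$ and the candidates $y_0, y_1$) remain accessible on the $\LLPO$-side of the composition, which is routine.
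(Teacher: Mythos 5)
Your proof is correct, but it follows a genuinely different route from the paper's. The paper mirrors its own Proposition~\ref{negativeIR}: after padding $A$ to a set $B$ that certainly has points on both sides of $x$, it extracts from the positive information one-sided names of the nearest candidate below $x$ (as an element of $\IR_<$) and the nearest candidate above $x$ (as an element of $\IR_>$), uses $\lim\times\lim\equivsW\lim$ to promote these to genuine reals, and then applies $\LLPO$ to pick the closer one. You instead spend the single call to $\lim$ at the very start, upgrading the $\psi^+$-name of $A$ to a $\psi$-name via \cite[Proposition~4.2]{BG09}; once total information is available, $d(x,A)$ is computable exactly, and choosing between $x-d(x,A)$ and $x+d(x,A)$ by testing each against the negative information is precisely the argument of Proposition~\ref{ProjCLLPO}. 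In effect you prove $\ProjC^+_\IR\leqW\ProjC_\IR*\lim$ and compose with $\ProjC_\IR\leqW\LLPO$. Your route is arguably tidier: it factors the positive case cleanly through the total case and sidesteps the delicate point you flag, namely that a $\psi^+$-name does not allow one to semi-decide $b_n\leq x$, so producing a $\IR_<$-name of $\max\set{y\in B}{y\leq x}$ and a $\IR_>$-name of $\min\set{y\in B}{y\geq x}$ (the case $x\in B$ being the sensitive one) requires more care than the paper's phrasing suggests. Both arguments give only a Weihrauch, not a strong Weihrauch, reduction, and for the same cardinality reason already noted after Proposition~\ref{ProjCLLPO}.
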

\begin{proof}
Let $A:=\set{x_n}{n\in\IN}\in\AA_+(\IR)$ and $x\in\IR$ be given. We can now compute
$$B:=A\cup\{x-d(x,x_0)-1\}\cup\{x+d(x,x_0)+1\}\in\AA_+(\IR).$$
Notice that both $\ell:=\max\set{y\in B}{y\leq x}$ and $r:=\min\set{y\in
B}{y\geq x}$ always exist, which would not hold true in general for our
original $A$. Moreover $\ProjC^+_\IR(x,B)=\ProjC^+_\IR(x,A)$. Recalling that
$B\in\AA_+(\IR)$ we can determine $\ell$ as an element of $\IR_<$ and $r$ as
an element of $\IR_>$. Analogously to the proof of Proposition
\ref{negativeIR}, we can then use $\lim\times\lim\equivsW\lim$ to obtain
$\ell,r\in\IR$ and consequently $\LLPO$ to determine $y \in \{\ell,r\}$ such
that $d(y,x) = \min\{d(\ell,x),d(r,x)\}$. As observed, this gives a member of
$\ProjC^+_{\IR^n}(x,A)$.
\end{proof}

\begin{lemma}\label{BWT2limpos}
$\BWT_2\times\lim\leqsW\ProjC^+_\IR$.
\end{lemma}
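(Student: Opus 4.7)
The plan is to adapt the proof of Lemma \ref{BWT2limneg} by constructing a set $A \in \AA_+(\IR)$, rather than one in $\AA_-(\IR)$, from which the projection of $0$ simultaneously encodes an answer to $\BWT_2(p)$ and the input $q$ to $\lim$. I would keep all the preparatory devices of the negative-information proof: replace $\lim$ by $\id_{\Sie^\IN,\Can}$, use the computable embedding $\iota_\Sie : \Sie^\IN \to \IR_<$, and use the Sierpinski-lifted sorting map $\Sort_\Sie : \Can \to \Sie^\IN$. From inputs $p \in \Can$ (for $\BWT_2$) and $q \in \Sie^\IN$ (for $\id_{\Sie^\IN,\Can}$), I would form the two interleavings $s_L := \langle \Sort_\Sie(1-p), q\rangle$ and $s_R := \langle \Sort_\Sie(p), q\rangle$ in $\Sie^\IN$. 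The roles of $p$ and $1-p$ are swapped relative to Lemma \ref{BWT2limneg} because, in the positive construction below, the projection of $0$ will reach each cluster of $A$ from outside and hence select the side with the \emph{larger} value of $\iota_\Sie$ rather than the smaller.

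Since $\iota_\Sie(s_L), \iota_\Sie(s_R) \in \IR_<$, I can enumerate non-decreasing rational sequences $r^L_n \nearrow \iota_\Sie(s_L)$ and $r^R_n \nearrow \iota_\Sie(s_R)$. Setting $L_n := r^L_n - 2$ and $R_n := 2 - r^R_n$ gives $L_n \nearrow L := \iota_\Sie(s_L) - 2 \in [-2,-1]$ and $R_n \searrow R := 2 - \iota_\Sie(s_R) \in [1,2]$. Enumerating $(L_n)_n$ and $(R_n)_n$ provides a $\psi^+_\IR$-name of the bounded closed set
\[
A := \{L_n : n \in \IN\} \cup \{R_n : n \in \IN\} \cup \{L, R\}.
\]
Taking $x := 0$, I would verify that $\ProjC^+_\IR(0,A) \subseteq \{L,R\}$: on the left component $\{L_n\} \cup \{L\} \subseteq [-2,-1]$ the element closest to $0$ is the maximum, which is $L$ (since $L_n \leq L$ and $L \in A$), and the situation on $[1,2]$ is symmetric.

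Since $\iota$ preserves lexicographic order and the $q$-entries of the two interleavings cancel, the comparison $|L| \leq |R|$ amounts to $\Sort_\Sie(1-p) \geq_{lex} \Sort_\Sie(p)$, i.e.\ to $p$ having at least as many $0$'s as $1$'s. A case analysis then gives the desired correctness: if $p$ has only finitely many $1$'s, then the projection is $L$ and $0$ is a cluster point of $p$; if $p$ has only finitely many $0$'s, then the projection is $R$ and $1$ is a cluster point of $p$; if both occur infinitely often, then $|L| = |R|$ and either of $L$, $R$ is a legitimate selection. For any returned $y$, since $|L|, |R| \geq 1$, the sign of $y$ is decidable and yields $\BWT_2(p) \in \{0,1\}$; applying $\iota^{-1}$ to $y+2$ (if $y<0$) or to $2-y$ (if $y>0$) recovers $s_L$ or $s_R$ as an element of $\Can$, from which $q$ is read off by de-interleaving. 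As this postprocessing does not consult the original input, the reduction is strong.

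The main obstacle is arranging the signs and monotonicities so that the distinguished limit points $L$ and $R$---and not any of the approximating $L_n$, $R_n$---actually realise the projection. The positive representation presents $A$ only through a dense sequence, so for $L$ and $R$ to be the points of $A$ closest to $0$ on their respective sides one must force $L_n \nearrow L$ from below and $R_n \searrow R$ from above. This constraint, combined with the fact that $\iota_\Sie$ lands in $\IR_<$, simultaneously pins down both the additive shifts by $\pm 2$ and the swap of the roles of $p$ and $1-p$ between $s_L$ and $s_R$ compared with the analogous construction in Lemma \ref{BWT2limneg}.
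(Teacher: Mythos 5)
Your proof is correct and follows essentially the same approach as the paper's: both place the two candidate projection points at $r-2=\iota_\Sie(\langle\Sort_\Sie(1-p),q\rangle)-2$ on the left and $\ell+2=2-\iota_\Sie(\langle\Sort_\Sie(p),q\rangle)$ on the right, use the same sign-to-cluster rule, and recover $q$ via $\iota^{-1}$ and de-interleaving. The only cosmetic difference is the choice of $A\in\AA_+(\IR)$, where the paper uses the two half-lines $\set{x\in\IR}{x\leq r-2\vee x\geq\ell+2}$ while you use a bounded discrete sequence accumulating at the same two points; both are equally easy to present as overt sets from the available $\IR_<$-information.
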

\begin{proof}
The proof is almost the same of that of Lemma \ref{BWT2limneg}. But the
replacement of the negative representation for closed sets with its dual
requires to switch the positions of $\ell$ and $r$ with respect to $0$. We
compute then the new set $A:=\set{x\in\IR}{x\leq r-2\vee x\geq
\ell+2}\in\AA_+(\IR)$. From $y\in\ProjC^+(0,B)$ we can then extract $q$.
Moreover, given the sign of $y$ , we will select 0 or 1 as accumulation point
of the input sequence by making the dual choice with respect to that of the
proof of Lemma \ref{BWT2limneg}.
\end{proof}

\begin{corollary}\label{BWT2Xlimpos}
$\ProjC^+_\IR\equivW\BWT_2\times\lim$.
\end{corollary}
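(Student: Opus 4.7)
The plan is to mirror exactly the proof of Corollary \ref{BWT2Xlimneg}, since the ingredients for the positive case are already in place: Proposition \ref{positiveIR} supplies the upper bound $\ProjC^+_\IR\leqW\LLPO*\lim$ and Lemma \ref{BWT2limpos} supplies the lower bound $\BWT_2\times\lim\leqsW\ProjC^+_\IR$. The task is then purely algebraic.

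For the lower bound, Lemma \ref{BWT2limpos} immediately yields $\BWT_2\times\lim\leqW\ProjC^+_\IR$ (since strong Weihrauch reducibility implies Weihrauch reducibility). For the upper bound, I would invoke two facts stated earlier in the paper: first, $\BWT_2\equivsW\LLPO'$ (from \cite[Corollary 11.11]{BW}, cited explicitly in the proof of Corollary \ref{BWT2Xlimneg}), and second, the general identity $f*\lim\equivW f'\times\lim$ valid for any multi-valued function $f$. Applying the latter with $f=\LLPO$, one has $\LLPO*\lim\equivW\LLPO'\times\lim\equivW\BWT_2\times\lim$. Combined with Proposition \ref{positiveIR}, this gives $\ProjC^+_\IR\leqW\BWT_2\times\lim$.

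Putting the two bounds together delivers the claimed equivalence $\ProjC^+_\IR\equivW\BWT_2\times\lim$. There is essentially no obstacle here: the entire content of the corollary was already packaged into Proposition \ref{positiveIR} and Lemma \ref{BWT2limpos}, and the only nontrivial moves, namely identifying $\BWT_2$ with $\LLPO'$ and converting the compositional product with $\lim$ into a cartesian product with $\lim$, are exactly the ones already used in the proof of Corollary \ref{BWT2Xlimneg}. Accordingly, the proof of the present corollary is a one-line reference back to that earlier proof, with the appropriate substitutions of positive for negative information.
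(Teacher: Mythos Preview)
Your proposal is correct and matches the paper's own proof essentially verbatim: the paper also simply cites Proposition~\ref{positiveIR} and Lemma~\ref{BWT2limpos} and says the result follows analogously to the proof of Corollary~\ref{BWT2Xlimneg}. Your unpacking of the algebraic step ($\LLPO*\lim\equivW\LLPO'\times\lim\equivW\BWT_2\times\lim$) is exactly what that analogy amounts to.
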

\begin{proof}
This follows by Proposition \ref{positiveIR} and Lemma \ref{BWT2limpos}, analogously to the proof of Corollary \ref{BWT2Xlimneg}.
\end{proof}

\subsection{Exact total projection operators}

For the case of total information we can fully characterize the Weihrauch
degree of $\ProjC_{\IR^n}$ already for $n=1$. We start by determining the
following upper bound:

\begin{proposition}\label{ProjCLLPO}
$\ProjC_\IR\leqW \LLPO$.
\end{proposition}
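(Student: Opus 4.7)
The plan is to exploit the one-dimensional geometry: for any $x \in \IR$ and nonempty closed $A \sbsq \IR$, the set of projection points of $x$ onto $A$ is a subset of the two-point set $\{y_0, y_1\}$, where $y_0 := x - d(x,A)$ and $y_1 := x + d(x,A)$, and at least one of $y_0, y_1$ lies in $A$ (since $A$ is closed and the infimum $d(x,A)$ is attained on the left or right of $x$). Given $A$ with total information, $d(x,A) \in \IR$ is computable (by \cite[Lemma 5.1.7]{Wei00}, as used already in the proof of Theorem \ref{wlcupperbound}), so $y_0$ and $y_1$ are computable from the input. Thus the task reduces to selecting between two concrete candidates, knowing that at least one is correct---the paradigmatic situation of $\LLPO$.

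The key step is the construction of a valid $\LLPO$-input. Using the negative information component of the $\psi_\IR$-name of $A$, we run in parallel, for $i=0,1$, the semi-decision procedure for $y_i \notin A$: we enumerate the open basic balls $B_{k_0}, B_{k_1}, \dots$ covering $\IR \setminus A$, and say that $y_i \notin A$ is \emph{confirmed at stage $n$} when, for the first time, $y_i \in B_{k_n}$ (which is itself semi-decidable because $B_{k_n}$ is open). We then define $p_0, p_1 \in \Bai$ by
$$
p_i(n) := \begin{cases} 1 & \text{if $y_i \notin A$ is confirmed exactly at stage $n$,} \\ 0 & \text{otherwise.} \end{cases}
$$
By construction each $p_i$ contains at most one nonzero entry; moreover, since at least one $y_i$ lies in $A$, at most one of the two processes ever confirms anything, so $\langle p_0, p_1 \rangle \in \dom(\LLPO)$.

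Finally, we apply $\LLPO$ to $(p_0, p_1)$ to obtain $i \in \{0,1\}$ with $p_i = 0^\IN$; this means the semi-decision for $y_i \notin A$ never halts, hence $y_i \in A$, and so $y_i \in \ProjC_\IR(x, A)$. The output $y_i$, obtained from $i$ and the values of $y_0, y_1$ (which are computable from the original input and so still available in the $\leqW$ reduction), completes the reduction. The only delicate point is verifying that the construction of $(p_0, p_1)$ meets the uniqueness requirement on nonzero entries in the domain of $\LLPO$, which is secured precisely by the geometric fact that $\{y_0, y_1\} \cap A \neq \eps$.
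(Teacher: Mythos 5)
Your proof is correct and follows essentially the same approach as the paper: compute $r = d(x,A)$ from the total information, identify the two candidate projection points $x - r$ and $x + r$, use the negative-information component to semi-decide non-membership in $A$ for each candidate, and feed the resulting pair into $\LLPO$. The paper's construction of $\langle p_0, p_1 \rangle$ is stated slightly differently (a centralized stage-by-stage procedure that commits once one candidate is excluded), but this is a cosmetic difference from your independent dovetailed semi-decisions, and both rely on the same geometric fact that $\{x-r, x+r\} \cap A \neq \eps$.
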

\begin{proof}
Let the input $(x,A)$ be given with $A\in\AA(\IR)$. It holds obviously that
$0<|\ProjC_\IR(x,A)|\leq 2$, and in fact $\ProjC_\IR(x,A)=\{x-r,x+r\}\cap A$ for
$r:=d(x,A)$. By using the total information on $A$ we can compute the exact
value of $r$ via approximations that become at every stage more reliable. We
produce then a valid input $\langle p_0,p_1 \rangle$ for $\LLPO$ in the following
way. At stage $s$, by considering the initial segment of the negative
information on $A$ that we have read so far, if both points $x-r$ and $x+r$
still are plausible candidates as members of $A$ we let $p_0(s):=0=:p_1(s)$.
Otherwise, suppose that we realize that one of the two points, say $x-r$, is
not in $A$. Then we put $p_0(s):=1 \neq 0=:p_1(s)$. We then let $p_0(s+m):=0=:p_1(s+m)$
for all $m>0$. If instead we realize that $x+r \notin A$ then we switch the
roles of $p_0$ and $p_1$.

Given now $i\in\LLPO(\langle p,q \rangle)$ we compute (again)
$x-r$ or $x+r$, depending on whether $i=0$ or $i=1$, finding an element of $\Proj_\IR(A,x)$.
\end{proof}

Notice that in the above proof the use of the original input after the
application of $\LLPO$ is essential: $\ProjC_\IR \leqsW \LLPO$ cannot hold
for mere cardinality reasons. But the opposite reduction even holds for the
strong version of Weihrauch reducibility:

\begin{proposition}\label{LLPOProjC}
$\LLPO\leqsW\ProjC_\IR$.
\end{proposition}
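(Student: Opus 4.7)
The plan is to produce, from a valid $\LLPO$-input $(p_0,p_1)$, a single instance $(0,A) \in \IR \times \AA(\IR)$ of $\ProjC_\IR$ whose projection points are in one-to-one correspondence with valid answers to $\LLPO$, and to extract the answer from the sign of any projection point alone (so that the reduction is strong).

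Explicitly, I would take
$$A := \{-1,1\} \cup \set{1-2^{-n-1}}{p_0(n)\neq 0} \cup \set{-1+2^{-n-1}}{p_1(n)\neq 0}.$$
Since at most one $p_i$ has a non-zero entry and at most one such $n$, $A$ consists of at most three points, all in $[-1,-1/2]\cup[1/2,1]$. The projection points of $0$ onto $A$ are: both $-1$ and $1$ when $p_0=p_1=0^\IN$; the unique point $1-2^{-n-1}$ when $p_0(n)\neq 0$ (in which case $p_1=0^\IN$); and symmetrically $-1+2^{-n-1}$ when $p_1(n)\neq 0$. In every case, if $y$ is a projection point then $y<0$ forces $i=0$ and $y>0$ forces $i=1$ to be a valid $\LLPO$-answer. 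Since $|y|\geq 1/2$, the sign of $y$ can be read off any Cauchy name of $y$ in finite time, and this postprocessing uses no information beyond $y$; this is what makes the reduction strong.

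The main thing to verify is that $A$ is computable as an element of $\AA(\IR) = \AA_-(\IR) \wedge \AA_+(\IR)$ uniformly in $(p_0,p_1)$. For $\psi_\IR^+$, unconditionally enumerate all basic rational balls containing $-1$ or $1$, and, whenever a non-zero entry $p_0(n)\neq 0$ (resp.\ $p_1(n)\neq 0$) is read, enumerate all basic balls containing $1-2^{-n-1}$ (resp.\ $-1+2^{-n-1}$). For $\psi_\IR^-$, I need to enumerate basic balls $B_k$ disjoint from $A$. Since every point of $A\setminus\{-1,1\}$ lies in $[1/2,1)\cup(-1,-1/2]$ and accumulates only at $\pm 1$, any basic ball $B_k$ not containing $-1$ or $1$ meets only finitely many of the candidate points $\{\pm 1 \mp 2^{-n-1}: n\in\IN\}$, indexed by an explicit finite set $F_k\subseteq\IN$ computable from $B_k$; then $B_k\cap A=\eps$ is decidable from $p_0\restriction F_k,\ p_1\restriction F_k$, which is all we need to enumerate.

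The step I expect to require the most care is precisely this negative-information enumeration, because it is the only place where the total representation is essentially used (positive information alone would not suffice to detect absence of added points). Once it is in place, everything else is a routine assembly: the map $(p_0,p_1)\mapsto(0,A)$ is computable, $\ProjC_\IR(0,A)$ returns some $y\in A$, and the sign of $y$ yields $\LLPO(p_0,p_1)$.
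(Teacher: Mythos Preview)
Your approach is correct and arguably cleaner than the paper's, which builds $A$ stage by stage: it first removes $]-1,1[$, $]-\infty,-2[$, $]2,\infty[$, then at each stage $s$ with $p_0,p_1$ still zero adds the pair $\pm(1+2^{-s})$ and deletes the intervening intervals, and when a nonzero digit appears in $p_i$ it drops the point on the corresponding side and stops. Your declarative definition of $A$ achieves the same effect more directly, and reading the sign of the projection point is identical in spirit.

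There is, however, a small gap in your negative-information argument. The claim that ``any basic ball $B_k$ not containing $-1$ or $1$ meets only finitely many of the candidate points'' is not literally true: the open ball $(\tfrac12,1)$ does not contain $1$, yet it contains $1-2^{-n-1}$ for every $n\ge 1$, so your $F_k$ would be infinite there. The easy fix is to restrict the enumeration to basic balls whose \emph{closure} avoids $\{-1,1\}$ (a decidable condition on rational endpoints); for those, $F_k$ is finite and computable from $B_k$, and such balls still cover $\IR\setminus A$ since $A$ is a finite set containing $\pm 1$. With this patch your argument goes through unchanged.
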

\begin{proof}
Let $\langle p_0,p_1\rangle\in\dom(\LLPO)$. We construct then a valid input
$(x,A)$ for $\ProjC_\IR$ according to the following idea: if a point of
$\ProjC_\IR(0,A)$ is negative, then $p_0=0^\IN$, and if a point of
$\ProjC_\IR(0,A)$ is positive, then $p_1=0^\IN$. If we do this then by
checking the sign of an element of $\ProjC_\IR(0,A)$ we determine an element
of $\LLPO(\langle p_0,p_1\rangle)$.

The construction of $A$ proceeds as follows. We immediately remove from $A$
the intervals $]-1,1[$, $]-\infty,-2[$ and $]2,\infty[$. Then we activate the
following inductive procedure. Suppose that at stage $s\geq 0$ it holds
$p_0(m)=0=p_1(m)$ for all $m\leq s$. We add then to $A$ the points
$x_{0,s}:=-1-2^{-s}$ and $x_{1,s}:=1+2^{-s}$. At the same time we remove from
$A$ the intervals $]x_{0,s-1},x_{0,s}[$ and $]x_{1,s},x_{1,s-1}[$. Otherwise,
let $s$ be the first stage in which a digit different from 0 appears in
$\langle p_0,p_1 \rangle$, say, $p_0(s)\neq 0$. We want then the closest
point of $A$ to 0 to be positive. To this aim, we add to $A$ the point
$x_{1,s}:=1+2^{-s}$ alone. Moreover, we remove from $A$ the intervals
$]x_{0,s-1},0[$, $]0,x_{1,s}[$ (notice that 0 was removed from $A$ already
before the start of the inductive procedure), and $]x_{1,s},x_{1,s-1}[$. In
this case the description of $A$ is complete after stage $s$. The case
$p_1(s)\neq 0$ is analogous.
\end{proof}

\begin{corollary}\label{ProjCequivLLPO}
$\ProjC_\IR\equivW\LLPO$.
\end{corollary}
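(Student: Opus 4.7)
The corollary is an immediate combination of the two preceding propositions, so the plan is essentially bookkeeping rather than new work. The one-line strategy is: from Proposition \ref{ProjCLLPO} we already have $\ProjC_\IR \leqW \LLPO$, and from Proposition \ref{LLPOProjC} we have $\LLPO \leqsW \ProjC_\IR$, which in particular gives $\LLPO \leqW \ProjC_\IR$ since strong Weihrauch reducibility implies Weihrauch reducibility. Chaining these two reductions yields $\ProjC_\IR \equivW \LLPO$, exactly the statement of the corollary.

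There is no real obstacle here because both directions have been dispatched in the immediately preceding results; the only thing to note is that the reverse direction was in fact proved in the stronger form $\leqsW$ (useful elsewhere), but for the $\equivW$ claim we only need the weaker $\leqW$. I would write the proof as a single sentence citing Propositions \ref{ProjCLLPO} and \ref{LLPOProjC}, without re-deriving either reduction.
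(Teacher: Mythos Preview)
Your proposal is correct and matches the paper's own proof exactly: the paper simply cites Propositions \ref{ProjCLLPO} and \ref{LLPOProjC}, combining the two directions just as you describe.
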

\begin{proof}
By Propositions \ref{ProjCLLPO} and \ref{LLPOProjC}.
\end{proof}

For $n \geq 2$ we see instead that a precise characterisation is given by
$\WKL$.

\begin{theorem}\label{WKLProjC}
$\WKL\leqsW\ProjC_{\IR^n}$ for $n\geq 2$.
\end{theorem}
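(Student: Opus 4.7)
First note that $\ProjC_{\IR^2} \leqsW \ProjC_{\IR^n}$ for $n \geq 2$ via the obvious embedding $(x,y) \mapsto (x,y,0,\dots,0)$ (the image $A \times \{0\}^{n-2}$ of $A \in \AA(\IR^2)$ has computable total information in $\IR^n$, and projection points transfer back by dropping the trailing zeros). Hence it suffices to prove $\WKL \leqsW \ProjC_{\IR^2}$. The plan is, given an infinite binary tree $T$, to construct a closed set $A_T \sbsq \IR^2$ (with total information computable from $T$) with $d(0, A_T) = 1$, such that each projection of $0$ onto $A_T$ lies on the unit circle at an angle uniformly encoding a path of $T$.

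Working in polar coordinates $(r, \theta)$, I would let $R$ be the closed quarter-annulus $\{1 \leq r \leq 2,\ 0 \leq \theta \leq \pi/2\}$ and fix a computable embedding $\iota \colon \Can \to [0, 1]$; to avoid dyadic coincidences, I would take the ternary Cantor embedding $\iota(p) = \sum_i p_i \cdot 2 \cdot 3^{-i-1}$. For each $w \in 2^{<\IN}$ of length $k$, define the open ``dead cone''
\[
V_w = \{(r, \theta) \colon 0 < r < 1 + 2^{-k},\ \theta \in I_w\},
\]
where $I_w \sbsq [0, \pi/2]$ is an open interval approximating $\iota([w]) \cdot \pi/2$, with a small inflation to ensure that the boundary angles arising from dyadic-like paths $w' 0^\IN$ or $w' 1^\IN$ of dead branches are also covered. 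Then set $A_T := R \setminus \bigcup_{w \notin T} V_w$.

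Negative information for $A_T$ is immediate: $A_T^c = (\IR^2 \setminus R) \cup \bigcup_{w \notin T} V_w$ is a union of open sets enumerable in $T$ (since $T^c$ is c.e.). The harder part is positive information. The key observation is that for a rational point $(r,\theta)$ with $r>1$ strict, the condition $(r,\theta) \in A_T$ is decidable from $T$, because only the finitely many $V_w$ with $|w| < \log_2(1/(r-1))$ can contain $(r,\theta)$. We enumerate basic balls meeting $A_T$ by searching for such rational certificates. The technical content is a density argument: if $B_j \cap A_T$ is witnessed only by a point on the unit circle at angle $\iota(p)\pi/2$ for some $p \in [T]$, then a rational certificate $(1+\var, \iota(q)\pi/2)$ can still be found, by choosing $q$ to agree with $p$ up to a depth exceeding $\log_2(1/\var)$; then every $V_w$ active at radius $1+\var$ is indexed by a prefix of $q$, hence of $p$, and so in $T$.

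Finally, $A_T \sbsq R$ forces $d(0, A_T) \geq 1$, and equality is attained at $(1, \iota(p)\pi/2)$ for any $p \in [T]$ (which exists by K\"onig's lemma). Hence $\ProjC_{\IR^2}(0, A_T)$ lies on the unit circle, and the angular coordinate of each projection point decodes, via $\iota^{-1}$, to a genuine path of $T$; since this extraction uses only the projection output, the reduction is strong. The main obstacles are the verification of positive information (the perturbation/density argument) and the correct geometric treatment of boundary angles for paths of dead branches, handled by the ternary Cantor encoding together with the small inflation of the intervals $I_w$.
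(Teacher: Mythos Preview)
Your overall geometric idea---an annular sector in polar coordinates with radial ``wedges'' carved out so that the projection of the origin lands on the inner circle at an angle encoding a solution---is essentially the paper's approach. However, your construction has a genuine gap.

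You take $R$ to be the \emph{full} quarter-annulus $\{1 \leq r \leq 2,\ 0 \leq \theta \leq \pi/2\}$ and remove only wedges $V_w$ whose angular intervals $I_w$ are small inflations of $\iota([w]) \cdot \pi/2$. But the ternary Cantor embedding $\iota$ has range of measure zero: angles lying in the \emph{gaps} of the Cantor set (for instance $\theta = \pi/4$, the midpoint of the first removed third) are not covered by any $I_w$, no matter how you inflate within ``small''. Consequently $(1,\theta)$ with $\theta$ a gap angle belongs to $A_T$ for \emph{every} tree $T$, and since $d(0,A_T)=1$ this is a legitimate member of $\ProjC_{\IR^2}(0,A_T)$. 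Your decoding step then fails, because $\iota^{-1}(\theta \cdot 2/\pi)$ is undefined. The ``small inflation'' you describe is designed to catch the endpoints $\iota(w0^\IN)$, $\iota(w1^\IN)$ of each Cantor interval, not the interiors of the large gaps between them.

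There are two natural repairs. One is to restrict $R$ from the start to the Cantor annulus $\{(r,\theta) : 1 \leq r \leq 2,\ \theta \in \iota(\Can)\cdot\pi/2\}$; total information for this set is still computable, and then every point on the inner boundary has a decodable angle. The other---and this is what the paper actually does---is to replace $\WKL$ by the strongly equivalent $\C_{[0,1]}$. Given $A \in \AA_-([0,1])$ with complement $\bigcup_n I_n$, the paper builds $K = \{(r,\alpha) : 1 \leq r \leq 2,\ 0 \leq \alpha \leq 1,\ (\forall n)(\alpha \in I_n \Rightarrow r \geq 1+2^{-n+1})\}$, using the \emph{enumeration index} $n$ (rather than string length) to set the carving depth. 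Now any point $(1,\alpha) \in K$ satisfies $\alpha \notin I_n$ for all $n$, i.e.\ $\alpha \in A$ directly, with no encoding or gap issues. The positive-information argument is then exactly your decidability observation: for rational $r>1$ only finitely many $I_n$ are relevant.
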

\begin{proof}
We prove the statement for $n=2$ by replacing $\WKL$ through its well known
strongly Weihrauch equivalent version $C_{[0,1]}$ (see \cite[Corollary 4.6]{BdBP}). The cases
$n>2$ are then as usual proved by transitivity of $\leqsW$.

Let then $A \in \AA_-([0,1])$ be given, which means that we are provided with
a sequence of rational open intervals $(I_n)_n$ such that $[0,1] \setminus A
= \bigcup_{n\in\IN} I_n$. We now construct the new closed set $K \in
\AA(\IR^2)$ as the set of all points (with polar coordinates, as in the
proofs of Theorem \ref{BWTRK-} and Proposition \ref{BWTRK+}) $(r, \alpha)$
satisfying the following three conditions:
\begin{enumerate}
\item $1 \leq r \leq 2$,
\item $0 \leq \alpha \leq 1$,
\item $(\forall n)(\alpha \in I_n \To 1+2^{-n+1} \leq r)$.
\end{enumerate}
Intuitively, we draw in $\IR^2$ part of the circular crown between the
circles of radius $1$ and $2$ centered at the origin. We then remove around a
point $(1,\alpha)$ a little open portion of the crown as soon as we know that
$\alpha \notin A$ (see Figure \ref{fig:WKLProjC}).
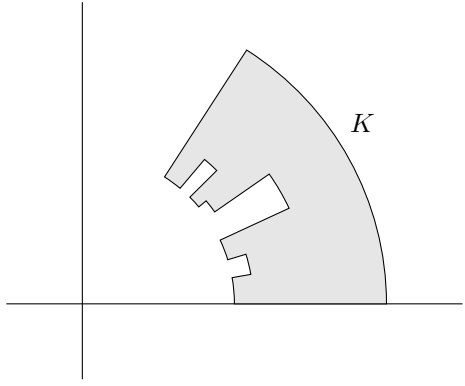
\begin{figure}
\begin{tikzpicture}[scale=2]
 \draw (-.5,0) -- (2.5,0);
 \draw (0,-.5) -- (0,2);
 \filldraw[fill=gray!20, draw=black] (57.29:2) arc [radius=2cm, start angle=57.29, end angle=0] -- (0:1)
 arc [radius=1cm, start angle=0, end angle=10] -- (10:1.125) arc [radius=1.125cm, start angle=10, end angle=17] -- (17:1)
 arc [radius=1cm, start angle=17, end angle=25] -- (25:1.5) arc [radius=1.5cm, start angle=25, end angle=35] -- (35:1.0625)
 arc [radius=1.0625cm, start angle=35, end angle=40] -- (40:1)
 arc [radius=1cm, start angle=40, end angle=45] -- (45:1.25) arc [radius=1.25cm, start angle=45, end angle=50] -- (50:1)
 arc [radius=1cm, start angle=50, end angle=57.29] -- cycle;
 \node [right] at (1.7,1.2) {$K$};
\end{tikzpicture}
\caption{The construction of the set $K\subseteq\mathbb R^2$ in the proof of
Theorem \ref{WKLProjC}: here $I_0,I_3\subseteq[0,1]\setminus A$ overlap.
\label{fig:WKLProjC}}
\end{figure}

It is immediate to see that if $(r,\alpha) \in \ProjC_{\IR^2}(0,K)$ then
$r=1$ and $\alpha \in A$. Hence it remains to prove that we can compute a
name of $K \in \AA(\IR^2)$.

To see that (a name for) $K$ as an element of $\AA_-(\IR^2)$ is computable
from (the given name for) $A$, observe that all the conditions (1)--(3) are
$\Pi^0_1$ in $(I_n)_n$.

To see that also (a name for) $K$ as an element of $\AA_+(\IR^2)$ is
computable from (the given name for) $A$, observe that $K$ is the closure of
the set
$$
C:=\set{(r,\alpha) \in \IQ \times \IQ}{(r,\alpha) \in K \land r>1}.
$$
We claim that we can enumerate, and even decide, this subset of $\IQ \times
\IQ$ effectively from $(I_n)_n$. The conditions $1<r\leq2$ and
$0\leq\alpha\leq1$ are immediately decidable for rational numbers. Hence, to
determine whether $(r,\alpha)\in K$ it remains only to analyze the condition
(3) in the definition of $K$. To this aim, for $1<r\leq 2$, let then
$m\in\IN$ be minimal such that $1+2^{-m+1}\leq r$. Then, for $1\leq\alpha\leq
2$ condition (3) is equivalent to $\alpha \notin \bigcup_{n<m} I_n$. Since
$r$ and $\alpha$ are rational numbers, we can find effectively such $m$ and
then decide whether $\alpha \in \bigcup_{n<m}I_n$.

Therefore the set $C$ is decidable and we can enumerate its members (as pairs
of real numbers) for the positive information on $K$.
\end{proof}

\begin{corollary}\label{ProjCequivWKL}
$\ProjC_{\IR^n} \equivsW \WKL$ for $n \geq 2$.
\end{corollary}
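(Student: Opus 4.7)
The plan is essentially trivial since all the work has already been done in the preceding results. I would simply combine the two directions of the reduction already established. The upper bound $\ProjC_{\IR^n}\leqsW\WKL$ for every $n\geq 1$ is part (2) of Theorem \ref{wlcupperbound}, which was proved by using the computability of $d(x,A)$ from total information to cut out the closed set $A\cap\partial B(x,d(x,A))$ of projection points, bounding it inside a large ball to turn it into a compact set, and then applying weak K\"onig's lemma (equivalent to $\K_\IR$) to select a point. The matching lower bound $\WKL\leqsW\ProjC_{\IR^n}$ for $n\geq 2$ is exactly Theorem \ref{WKLProjC}, where a tree (encoded as a closed subset of $[0,1]$ via $\C_{[0,1]}\equivsW\WKL$) is converted into a closed annular set $K\subseteq\IR^2$ whose projections from the origin correspond precisely to the paths through the tree.

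So the proof reduces to the observation that both Theorem \ref{wlcupperbound}.(2) and Theorem \ref{WKLProjC} give strong Weihrauch reductions, so their combination yields the strong equivalence. No further obstacle arises, and no calculation is needed beyond citing the two results.

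Concretely the argument fits in a single line: apply Theorem \ref{wlcupperbound}.(2) for the reduction $\ProjC_{\IR^n}\leqsW\WKL$, apply Theorem \ref{WKLProjC} for $\WKL\leqsW\ProjC_{\IR^n}$, and conclude by the antisymmetry of $\leqsW$ on Weihrauch degrees. The only thing worth double-checking is that both reductions are genuinely strong (not merely Weihrauch); this is ensured because in Theorem \ref{wlcupperbound}.(2) the proof notes that $\WKL$ is a cylinder so $\leqW$ upgrades to $\leqsW$, and Theorem \ref{WKLProjC} already states the conclusion with $\leqsW$.
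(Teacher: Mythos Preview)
Your proposal is correct and matches the paper's own proof exactly: the corollary is obtained by combining Theorem \ref{wlcupperbound}.(2) and Theorem \ref{WKLProjC}. Your additional remark about both reductions being genuinely strong is accurate and even makes the justification slightly more explicit than the paper's one-line citation.
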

\begin{proof}
By Theorem \ref{wlcupperbound}.(2) and Theorem \ref{WKLProjC}
\end{proof}

\section{Approximate projections}\label{sec:approx}

Since, as we have seen, the (exact) projection operators are computationally
quite hard, it might be reasonable to consider some approximate versions of
them. In many practical circumstances, we may indeed be content of finding
points that lie at a distance comparable with the smallest one.

\begin{definition}
Given a metric space $X$, $\var>0$, a point $x \in X$ and a nonempty set $A
\subseteq X$ we say that $y \in A$ is a \emph{$\var$-projection point of $x$
onto $A$} if $d(x,y) \leq (1+\var) \: d(x,A)$. In other words, the
$\var$-projection points of $x$ onto $A$ are the points of $A$ which are at
minimal distance from $x$ up to an error of $\var$ times the distance itself.
\end{definition}

Notice that if $x \in A$ then for any $\var$, $x$ is the unique
$\var$-projection point of $x$ onto $A$. In general, for any $\var$,
$\var$-projection points of $x$ onto $A$ exist unless $x \in \bar{A}
\setminus A$. As when dealing with exact projections, we will be interested
in the case where $A$ is closed; in this situation $\var$-projection points
of any $x \in X$ onto $A$ do exist for any $\var$. If $X$ is a computable
metric space, the multi-valued functions arising from $\var$-projections
points and depending on the representation of $A \subseteq X$ are defined
similarly to their exact counterparts.

\begin{definition}\label{approx}
Given a computable metric space $X$ and $\var>0$ the \emph{$\var$-approximate
negative, positive and total closed projection operators on $X$} are the
partial multi-valued functions $\ProjvarC^-_X$, $\ProjvarC^+_X$ and
$\ProjvarC_X$ which associate to every $x \in X$ (with Cauchy representation)
and every closed $A \neq \emptyset$ (with negative, positive and total representation,
respectively) the set of the $\var$-projection points of $x$ onto $A$.

Thus $\ProjvarC^-_X: \sbsq X \times \AA_-(X) \toto X$, $\ProjvarC^+_X: \sbsq
X \times \AA_+(X) \toto X$, and $\ProjvarC_X: \sbsq X \times \AA(X) \toto X$.

The \emph{$\var$-approximate negative, positive and total projections
operators for compact sets} are defined by replacing $\AA_-(X)$, $\AA_+(X)$,
and $\AA(X)$ with $\KK_-(X)$, $\KK_+(X)$, and $\KK(X)$ respectively. These
are denoted $\ProjvarK^-_X$, $\ProjvarK^+_X$, and $\ProjvarK_X$.
\end{definition}

The first observations about the approximated operators partly mimic the ones
we made for the exact operators.

\begin{fact}\label{approxkridc}
Let $X$ be a computable metric space and $\var>0$.
\begin{enumerate}
\item If $0<\var'<\var$ then $\var\text{-}\operatorname{P} \leqsW
    \var'\text{-}\operatorname{P} \leqsW \operatorname{P}$ where
    $\operatorname{P}$ is any of $\ProjK^-_X$, $\ProjC^-_X$, $\ProjK^+_X$,
    $\ProjC^+_X$, $\ProjK_X$, $\ProjC_X$.
\item $\K_X \leqsW \ProjvarK^-_X$ and $\C_X \leqsW \ProjvarC^-_X$.
\item $\ProjvarK^-_X \leqsW \ProjvarC^-_X$, $\ProjvarK^+_X \leqsW
    \ProjvarC^+_X$, $\ProjvarK_X \leqsW \ProjvarC_X$.
\item If $X$ is computably compact $\ProjvarK^-_X \equivsW \ProjvarC^-_X$,
    $\ProjvarK^+_X \equivsW \ProjvarC^+_X$, $\ProjvarK_X \equivsW
    \ProjvarC_X$.
\item $\ProjvarK^+_{\IR^n} \equivsW \ProjvarC^+_{\IR^n}$,
    $\ProjvarK_{\IR^n} \equivsW \ProjvarC_{\IR^n}$ for $n \geq 1$.
\end{enumerate}
\end{fact}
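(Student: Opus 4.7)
The plan is to handle the five parts in order, each following the template of the corresponding claim for exact projections (Facts \ref{kcredexact}, \ref{kridc}, \ref{kccompact} and Theorem \ref{CequivK}).

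For (1), the inequality $d(x,y)\leq(1+\var')d(x,A)$ implies $d(x,y)\leq(1+\var)d(x,A)$ whenever $0<\var'<\var$, and every exact projection point is trivially a $\var'$-projection point, so the identity realizer witnesses all the claimed strong reductions. For (2), I pick a computable point $x_0\in X$ (for instance $x_0=\alpha(0)$) and apply $\ProjvarK^-_X$ or $\ProjvarC^-_X$ to $(x_0,K)$ or $(x_0,A)$: the returned point lies in the given nonempty set. For (3), the reductions are realized by the identity on names, since a $\kappa$-type name of $K$ supplies, after discarding the boundedness data, the corresponding $\psi$-type name of $K$ viewed as closed, and the set of $\var$-projection points is unchanged. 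Part (4) is then settled as in Fact \ref{kccompact}: computable compactness of $X$ provides a finite cover of $X$ by basic balls, which lets one effectively augment a closed-set name with the missing compactness information.

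The only step requiring a genuine construction is (5), and I would mimic the positive and total cases of the proof of Theorem \ref{CequivK}. Given $(x,A)\in\dom(\ProjvarC^+_{\IR^n})$ (or $\ProjvarC_{\IR^n}$), \cite[Lemma 5.1.7]{Wei00} allows me to compute $d(x,A)$ as an element of $\IR_>$, hence to find a rational $M>d(x,A)$, together with an integer $N\geq d(0,x)$. Setting $R:=N+(1+\var)M$, every $\var$-projection point $y$ of $x$ onto $A$ satisfies
\[
d(0,y)\leq d(0,x)+d(x,y)\leq N+(1+\var)d(x,A)<R,
\]
so I replace $A$ by $K:=\overline{A\cap B(0,R)}$ in the positive case, and by $L:=(A\cap\overline B(0,R))\cup\partial B(0,R)$ in the total case, exactly as in Theorem \ref{CequivK}. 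Since an actual projection point of $x$ onto $A$ lies in $\overline B(0,N+M)\subseteq\overline B(0,R)$, we have $d(x,K)=d(x,L)=d(x,A)$, and hence the $\var$-projection points of $x$ onto $K$ and onto $L$ coincide with those of $x$ onto $A$.

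The main (and essentially only) obstacle is the consistency check inside (5): after widening the bounding radius from $N+M$ to $R$ to accommodate all $\var$-projection points, the positive and negative pieces of information on the total-case compact set $L$ must remain coherent, and the added boundary $\partial B(0,R)$ must not create spurious $\var$-projection points. Both issues are handled as in Theorem \ref{CequivK}: the whole boundary $\partial B(0,R)$ is added to $L$ so that its $\psi^+_{\IR^n}$- and $\psi^-_{\IR^n}$-names match, and the radius $R=N+(1+\var)M$ is chosen large enough that every point of $\partial B(0,R)$ has distance at least $(1+\var)M>(1+\var)d(x,A)$ from $x$, strictly exceeding the $\var$-projection threshold.
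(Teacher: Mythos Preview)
Your proposal is correct and follows the same plan as the paper: parts (1)--(4) are dismissed exactly as the paper does (obvious, respectively verbatim repetitions of Facts \ref{kridc} and \ref{kccompact}), and part (5) is handled by reusing the constructions of $K$ and $L$ from Theorem \ref{CequivK}. The paper's proof of (5) is even terser than yours---it simply asserts that the $\var$-projection points of $x$ onto the $K$ and $L$ built in Theorem \ref{CequivK} are already $\var$-projection points of $x$ onto $A$. Your explicit enlargement of the bounding radius from $M+N$ to $R=N+(1+\var)M$ is a genuine clarification for the total case: with the original radius, a point of $\partial B(0,M+N)\setminus A$ lies at distance $\geq M$ from $x$, which need not exceed $(1+\var)d(x,A)$ when $d(x,A)$ is close to $M$, so such a point could be a $\var$-projection point of $x$ onto $L$ without belonging to $A$. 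Your choice of $R$ rules this out, and the paper's sentence glosses over this point.
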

\begin{proof}
(1) and (2) are obvious.

(3) and (4) can be proved exactly as Facts \ref{kridc} and \ref{kccompact}
respectively: indeed those proofs consist of transformations of the input and
do not use any specific feature of the functions involved.

(5) follows from the proofs of the analogous results in Theorem
\ref{CequivK}, since the $\var$-projection points of $x$ onto the compact
sets $K$ and $L$ constructed there are also $\var$-projection points of $x$
onto the original closed set $A$.
\end{proof}

Notice that in (5) above $\ProjvarK^-_{\IR^n} \equivsW \ProjvarC^-_{\IR^n}$
is missing -- we show below in Proposition \ref{prop:compactapprox} that this
does not hold. The proof of the analogous result in Theorem \ref{CequivK}
cannot be translated to the approximate setting. Indeed if we repeat that
construction then to obtain the $\var$-projection points of $x$ onto $A$ we
need to have a $\var'$-projection point of $x$ onto $H$ for
$$\var' \leq\frac{\arctan(d(x,A) (1+ \var))}{\arctan(d(x,A))} -1.$$ Hence no specific
$\var'$ will work for all $x$ and $A$. Even viewing $\var$ as part of the
input we do not solve the problem: from the negative information on $A$ we
obtain only lower bounds for $d(x,A)$.

\subsection{Approximated negative projection operators}

The following results characterizes the computational complexity of negative
approximated projection operators on $\IR^n$ for all $n\geq1$.

\begin{theorem}\label{approxequivCR}
For every $\var>0$ and $n \geq 1$, $\ProjvarC^-_{\IR^n} \equivsW \C_\IR$.
\end{theorem}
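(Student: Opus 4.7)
The two inequalities will be proved separately. For $\C_\IR \leqsW \ProjvarC^-_{\IR^n}$, my plan is to invoke Fact~\ref{approxkridc}(2), giving $\C_{\IR^n} \leqsW \ProjvarC^-_{\IR^n}$, and combine it with the elementary strong reduction $\C_\IR \leqsW \C_{\IR^n}$ that embeds $\IR$ into $\IR^n$ as $\IR \times \{0\}^{n-1}$ and reads off the first coordinate of the selected point (when $n = 1$ this embedding step is unnecessary).

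The interesting direction is $\ProjvarC^-_{\IR^n} \leqsW \C_\IR$. I would exploit the equivalence $\C_\IR \equivW \WKL * \C_\IN$ recalled in Section~\ref{ssec:milestones} and realize the reduction through one $\C_\IN$ call followed by one $\K_{\IR^n} \equivsW \WKL$ call. First, from the negative information $U_0, U_1, \ldots$ on $\IR^n \setminus A$, I would compute a non-decreasing rational sequence $(r_k)_k$ converging to $d(x, A)$ by letting $r_k$ be the best rational lower bound for $d(x, A)$ certified by stage $k$ (that is, $r_k$ is the largest rational $q$ within a suitable finite search range for which $\overline{B(x,q)} \subseteq \bigcup_{i<k} U_i$, and $r_k := 0$ when no positive such $q$ has yet been certified). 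Compactness of Euclidean closed balls guarantees $r_k \uparrow d(x, A)$.

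I would then apply $\C_\IN$ to
\[
S := \{k \in \IN : A \cap \overline{B(x, (1+\var) r_k)} \neq \emptyset\}.
\]
The set $S$ is non-empty: when $d(x, A) > 0$ some $r_k$ eventually exceeds $d(x, A)/(1+\var)$, forcing $K_k := A \cap \overline{B(x, (1+\var) r_k)}$ to contain any exact projection point of $x$ onto $A$; when $d(x, A) = 0$ we have $x \in A$ (as $A$ is closed) and $x \in K_k$ for every $k$. The main obstacle I anticipate is verifying that $S$ is closed in $\AA_-(\IN)$: one has $k \notin S$ iff $d(x, A) > (1+\var) r_k$ iff the compact ball $\overline{B(x, (1+\var) r_k)}$ admits a finite subcover by the $U_i$, which is semi-decidable precisely because Euclidean closed balls are compact. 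Once $k^* \in S$ has been selected, $K_{k^*}$ is a non-empty compact set whose negative information is computable from $(x, A, k^*)$, bounded by $\overline{B(x, (1+\var) r_{k^*})}$, hence an element of $\KK_-(\IR^n)$ via Remark~\ref{compeucl}. A single $\K_{\IR^n}$ call returns some $y \in K_{k^*}$, which by construction satisfies $y \in A$ and $d(x, y) \leq (1+\var) r_{k^*} \leq (1+\var) d(x, A)$, so $y$ is an $\var$-projection point of $x$ onto $A$. This yields $\ProjvarC^-_{\IR^n} \leqW \WKL * \C_\IN \equivW \C_\IR$, and the passage to $\leqsW$ follows from the cylinder property of $\C_\IR$.
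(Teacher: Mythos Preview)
Your proof is correct. The easy direction matches the paper exactly. For the hard direction, both you and the paper use the same core idea: compute rational lower bounds $r_s \uparrow d(x,A)$ and work with the slices $K_s = A \cap \overline{B(x,(1+\var)r_s)}$, which are eventually nonempty and whose points are all valid $\var$-projection points. The difference is only in packaging. The paper bundles everything into a single closed-choice call by forming
\[
B = \set{(y,s)}{y\in A \land s\in\IN \land d(x,y)\leq(1+\var)\,r_s} \subseteq \IR^{n+1},
\]
which is essentially $\bigcup_s K_s \times \{s\}$, and applies $\C_{\IR^{n+1}} \equivsW \C_\IR$ once; projecting the output onto the first $n$ coordinates yields the answer and gives $\leqsW$ immediately. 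You instead invoke the decomposition $\C_\IR \equivW \WKL * \C_\IN$ explicitly: first $\C_\IN$ selects an index $k^*$ with $K_{k^*} \neq \emptyset$, then $\K_{\IR^n}$ selects a point from that compact slice. Your route makes the two-stage structure transparent but only yields $\leqW$ at first, so you need the cylinder property of $\C_\IR$ (true, but not stated in this paper) to upgrade to $\leqsW$; the paper's single-call construction avoids that extra step.
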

\begin{proof}
For the right-to-left direction, observe that $\C_\IR \leqsW \C_{\IR^n}
\leqsW \ProjvarC^-_{\IR^n}$ by Fact \ref{approxkridc}.(2).

For the other direction, consider an input $(x,A) \in
\dom(\ProjvarC^-_{\IR^n})$. Since we can compute $d(x,A) \in \IR_<$, we
denote by $r'_s\in\IQ$ the strict lower bound for $d(x,A)$ computed at stage
$s$, so that $\lim_{s \to \infty} r'_s = d(x,A)$. We set
$r_s=\max\{r'_s,0\}$.

We now define the negative closed set
$$
B:=\set{(y,s)}{y\in A \land s\in\IN \land d(x,y)\leq(1+\var)\:r_s}\sbsq\IR^{n+1}.
$$
To see that we can compute a $\psi^-_{\IR^n}$-name of $B$ observe that $B$ is
defined by a $\Pi^0_1$-formula with $A$ as a parameter.

Intuitively, $B$ is constituted by ``copies'' of different subsets of
$A\sbsq\IR^n$ translated onto different levels of the space $\IR^{n+1}$, so
that (i) each copy lies at distance $1$ from the adjacent copies, (ii) on the
$s$-th level we remove the points of $A$ that are ``too far'' from $x$
according to the approximation of $(1+\var)\:d(x,A)$ that we know at that
stage. Notice that the $s$-th level of $B$ is nonempty if and only if $r_s
\geq \frac {d(x,A)} {1+\var}$, and this happens for some $s$ (for all $s$
when $d(x,A)=0$) because $\sup\set{r_s}{s\in\IN}=d(x,A)$. Therefore $B \neq
\emptyset$ is a valid input for $\C_{\IR^{n+1}}$.

If $(y,s) \in \C_{\IR^{n+1}}(B)$, then $y \in \ProjvarC^-(x,A)$ because
$d(x,y) \leq (1+\var)\:r_s \leq (1+\var)\:d(x,A)$. We have shown
$\ProjvarC^-_{\IR^n} \leqsW \C_{\IR^{n+1}}$. Finally
$\C_{\IR^{n+1}}\leqsW\C_\IR$ by \cite[Corollary 4.9]{BdBP}.
\end{proof}

For $\ProjvarK^-_{\IR^n}$ we only state the bounds given in the following
proposition. We recall the use of the finite-parallelization operator that
maps any given multi-valued $f:\sbsq X\toto Y$ to $f^*:\sbsq X^*\toto Y^*$
defined as $f^*(n,x_1,..,x_n):=\{n\}\times f(x_1)\times...\times f(x_n)$ for
all $n\in\IN$.

\begin{proposition}
\label{prop:compactapprox} For every $\var>0$ and $n \geq 1$,
\[
\WKL \leqsW \ProjvarK^-_{\IR^n} \leqW \WKL * \LPO^* * \LPO \lW \C_\IR.
\]
\end{proposition}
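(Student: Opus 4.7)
The statement splits into three reductions plus a strictness claim. For the leftmost reduction $\WKL \leqsW \ProjvarK^-_{\IR^n}$, I would chain $\WKL \equivsW \K_\IR$ (Section \ref{ssec:milestones}) with the evident coordinate-embedding reduction $\K_\IR \leqsW \K_{\IR^n}$ and then apply Fact \ref{approxkridc}.(2), which gives $\K_{\IR^n} \leqsW \ProjvarK^-_{\IR^n}$.

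For the central upper bound $\ProjvarK^-_{\IR^n} \leqW \WKL * \LPO^* * \LPO$, I would design an algorithm whose three computational stages match the three factors of the compositional product. From the input $(x,K)$ with $K \sbsq \overline B(0,N)$ one computes the upper bound $M := d(x,0)+N \geq d(x,K)$, and observes that $d(x,K)$ is lower-semicomputable (the $\Sigma^0_1$ predicate $d(x,K)>q$ is witnessed when a finite portion of the negative information covers the compact ball $\overline B(x,q)$). The initial $\LPO$ query answers \emph{is $d(x,K) > 0$?}, equivalently \emph{is $x \notin K$?} If the answer is negative, output $x$. Otherwise, the witnessing stage produces a positive rational $a$ with $a \leq d(x,K) \leq M$; fix a rational $\eta \in (0,\var]$ and set $n := \lceil \log_{1+\eta}(M/a) \rceil$. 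An application of $\LPO^n \leqW \LPO^*$ answers in parallel the $\Sigma^0_1$ questions \emph{is $d(x,K) > a(1+\eta)^i$?} for $i=1,\dots,n$; taking the largest $i$ answering yes (or $i=0$) localises $d(x,K) \in (a(1+\eta)^i, a(1+\eta)^{i+1}]$. Setting $r := a(1+\eta)^{i+1}$, the compact set $K' := K \cap \overline B(x,r) \in \KK_-(\IR^n)$ is nonempty and computable from $K$ and $r$, so $\K_{\IR^n} \equivsW \WKL$ selects a point $y \in K'$; the inequality $d(x,y) \leq r \leq (1+\eta)\, d(x,K) \leq (1+\var)\, d(x,K)$ certifies $y$ as a valid $\var$-projection point.

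For the third reduction $\WKL * \LPO^* * \LPO \leqW \C_\IR$, I would combine three standard facts: $\LPO \leqW \C_\IN$ and $\LPO^* \leqW \C_\IN$ (encoding $n$ parallel bits as a single natural), the closure property $\C_\IN * \C_\IN \equivW \C_\IN$, and the identity $\C_\IR \equivW \WKL * \C_\IN$ recorded in Section \ref{ssec:milestones}. Monotonicity of $*$ then gives $\LPO^* * \LPO \leqW \C_\IN$, whence $\WKL * \LPO^* * \LPO \leqW \WKL * \C_\IN \equivW \C_\IR$.

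The main obstacle is the strictness $\WKL * \LPO^* * \LPO \lW \C_\IR$, which requires showing $\C_\IR \nleqW \WKL * \LPO^* * \LPO$; by $\C_\IN \leqW \C_\IR$ it suffices to establish $\C_\IN \nleqW \WKL * \LPO^* * \LPO$. The plan is to isolate a structural invariant distinguishing the two classes: in $\LPO^* * \LPO$ the number of $\Sigma^0_1$ queries posed in the second stage is fixed once the single initial $\LPO$ query has been answered, so the total amount of $\Sigma^0_1$ information extracted is always bounded by some integer depending computably on the input plus one bit, whereas $\C_\IN$ intrinsically performs unbounded search. One then has to verify that this bounded-mind-change feature survives premultiplication by $\WKL$, using that $\WKL$ contributes only non-deterministic information and cannot itself produce a natural number output requiring unbounded search; compare with the argument behind Proposition \ref{omegaBWTn}, which rules out $\C_\IN \leqW \BWT_n$ for any fixed $n$ by a similar bounded-information obstruction.
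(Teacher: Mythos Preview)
Your treatment of the three reductions is correct and essentially mirrors the paper's argument: the first reduction via $\K_{\IR^n}$ and Fact~\ref{approxkridc}(2); the central upper bound by a staged $\LPO$/$\LPO^*$/$\WKL$ algorithm (the paper uses annular slices $D_\ell = \{y : 2^{-k}(1+\var)^\ell \leq d(x,y) \leq 2^{-k}(1+\var)^{\ell+1}\}$ where you use nested closed balls, but the logic is identical); and the third reduction by folding $\LPO^* * \LPO$ into $\C_\IN$ and invoking $\C_\IR \equivW \WKL * \C_\IN$.

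The genuine gap is in the strictness claim. You correctly reduce to $\C_\IN \nleqW \WKL * \LPO^* * \LPO$, but your plan thereafter is only a heuristic: the ``bounded-mind-change feature'' you invoke is neither formalised nor shown to survive premultiplication by $\WKL$, and the analogy with Proposition~\ref{omegaBWTn} supplies no mechanism for eliminating the $\WKL$ factor. The paper closes this gap with two specific tools you are missing. First, the \emph{closed choice elimination theorem} \cite[Theorem 2.1]{LP}: since the degree of $\C_\IN$ contains a single-valued representative (e.g.\ unique choice), a hypothetical reduction $\C_\IN \leqW \WKL * (\LPO^* * \LPO)$ would already force $\C_\IN \leqW \LPO^* * \LPO$. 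This is exactly the step --- peeling off $\WKL$ --- that your sketch leaves open. Second, \emph{Hertling's level} \cite{hertling}, an ordinal invariant of Weihrauch degrees: the level of $\C_\IN$ does not exist, whereas $\LPO^* * \LPO$ has level at most $\omega \cdot 2$, which rules out $\C_\IN \leqW \LPO^* * \LPO$ and completes the separation.
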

\begin{proof}
The first inequality follows from Fact \ref{approxkridc}.(2) and the fact
that $\WKL \equivsW \K_{\IR^n}$ by \cite[Theorem 8.5]{BG11a}.

We proceed to
show that $\ProjvarK^-_{\IR^n} \leqW \WKL * \LPO^* * \LPO$:

Given an input $(x,K) \in \dom(\ProjvarK^-_{\IR^n})$, we can use $\LPO$ to
decide whether or not $x \in K$. If yes, we can output $x$. If no, we can
compute a lower bound $2^{-k} < d(x,K)$. Since we know $K$ as a compact set,
we can subsequently compute some $L \in \mathbb{N}$ such that $K \subseteq
B(x, 2^{-k}(1 + \varepsilon)^{L+1})$. Consider the slices $D_\ell := \{y \in
\IR^n \mid 2^{-k}(1 + \varepsilon)^\ell \leq d(x,y) \leq 2^{-k}(1 +
\varepsilon)^{\ell+1}\}$, which we can effectively compute as closed sets. We
can use $\LPO^L$ to decide for each $\ell < L$ whether $D_\ell \cap K =
\emptyset$ (as we have $K$, and thus also $D_\ell \cap K$ as a compact set).
Let $\ell_0$ be the least positive answer (if it exists), or $L$ otherwise.
Then $D_{\ell_0} \cap K$ is available as a non-empty compact set, and each of
its elements (chosen by $\K_{\IR^n}\equivW\WKL$) is a valid output for
$\ProjvarK^-_{\IR^n}(x,K)$.

That $\WKL * \LPO^* * \LPO \leqW \C_\IR$ is straightforward, e.g.\ via the
independent choice theorem implying the closure under compositional product
of the class of non deterministic functions with finitely many mind changes
(\cite[Theorem 7.6]{BdBP}). To see that this is strict, we observe that
$\C_\IN \nleqW \WKL * \LPO^* * \LPO$. Since the degree of $\C_\IN$ admits a
single-valued representative (for example unique choice \cite{BdBP}), the
closed choice elimination theorem (as stated in \cite[Theorem 2.1]{LP})
implies that if $\C_\IN \leqW \WKL * \LPO^* * \LPO$, then $\C_\IN \leqW
\LPO^* * \LPO$. That this is impossible can be seen using Hertling's level
\cite{hertling}, which is an ordinal invariant of Weihrauch degrees defined
as follows: Given a function $f$, let $D_0 = \dom(f)$, let $D_{\alpha+1}$ be
the closure of the set of discontinuity points of $f|_{D_\alpha}$, and for
limit ordinal $\gamma$, let $D_\gamma = \bigcap_{\beta < \alpha} D_{\beta}$.
The level of $f$ is the least $\alpha$ such that $D_\alpha = \emptyset$ (if
this ever happens). The level of $\C_\IN$ does not exist \cite{paulymaster},
whereas $\LPO^* * \LPO$ has level at most $\omega \cdot 2$.
\end{proof}

Notice that our proof shows in fact that
\[
\ProjvarK^-_X \leqW \WKL * \LPO^* * \LPO,
\]
and hence $\ProjvarK^-_X \lW \C_\IR$, for every computable metric space $X$.

\subsection{Approximated positive projection operators}

\begin{theorem}\label{var+omega1}
For every $\var>0$ and $n \geq 1$, $\ProjvarC^+_{\IR^n} \leqW \Sort$.
\end{theorem}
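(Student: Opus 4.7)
The plan is to establish the reduction $\ProjvarC^+_{\IR^n}\leqW\min^-_{\omega+1}$ and invoke Proposition~\ref{sortomega}. Pick $\var'\in(0,\var]$. Given an input $(x,A)$, with $A$ presented by a dense sequence $(a_i)_i\sbsq A$ (via Remark~\ref{rem:psi+} and completeness of $\IR^n$), I would first compute a rational $c_0>d(x,a_0)\geq d(x,A)$ and set $c_n:=c_0(1+\var')^{-n}$; this gives a rational, strictly decreasing sequence of scales with $c_n\to 0$.

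The core step is to feed $\min^-_{\omega+1}$ the closed set
\[
B:=\{-2^{-n}:c_n\leq d(x,A)\}\cup\{0\}\sbsq\omega+1.
\]
This $B$ is nonempty and closed in $\omega+1$, and a $\psi^-_{\omega+1}$-name can be produced computably from the input: the only nontrivial exclusions are the singletons $\{-2^{-n}\}$, and $-2^{-n}\notin B$ is equivalent to $c_n>d(x,A)$, which is semi-decidable because the positive representation of $A$ yields $d(x,A)\in\IR_>$ (\cite[Lemma 5.1.7]{Wei00}). From $\min(B)$ I would then recover the output. If $\min(B)=-2^{-n^*}$, then $n^*$ is the least index with $c_{n^*}\leq d(x,A)$, so by the choice of $c_0$ we have $n^*\geq 1$ and
\[
d(x,A)<c_{n^*-1}=(1+\var')c_{n^*}\leq(1+\var')d(x,A)\leq(1+\var)d(x,A);
\]
a search through $(a_i)_i$ finds some $a_j$ with $d(x,a_j)<c_{n^*-1}$ (which exists by density), and then $y:=a_j$ is a $\var$-projection point. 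If instead $\min(B)=0$, then $c_n>d(x,A)$ for every $n$, forcing $d(x,A)=0$; since $A$ is closed this gives $x\in A$, and the forced output is $y:=x$.

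The point I expect to be most delicate is that $\min(B)$ is delivered only as a Cauchy name in $\omega+1$, so no finite-stage observation separates the branch $\min(B)=0$ from a still-possible $\min(B)=-2^{-n^*}$ with very large $n^*$, while the realizer must nonetheless emit a Cauchy name of $y\in\IR^n$. I would handle this by observing that for fixed $x,A$ the map $\min(B)\mapsto y$ is continuous at $0$: as $n^*\to\infty$ one has $c_{n^*-1}\to 0$, hence the chosen $a_j$ satisfies $d(x,a_j)<c_{n^*-1}\to 0$ and so $a_j\to x$, matching the $\min(B)=0$ branch in the limit. A Cauchy name for $y$ can therefore be produced by dovetailing the two cases and emitting partial approximations compatible with whichever branch eventually becomes definite.
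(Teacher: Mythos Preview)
Your argument is correct and, like the paper, goes through $\min^-_{\omega+1}$ via Proposition~\ref{sortomega}; however, the closed subset of $\omega+1$ you build is genuinely different from the paper's. The paper indexes the set by the dense sequence itself, putting $-2^{-i}$ in whenever $x_i$ already satisfies $d(x,x_i)\leq(1+\var)d(x,x_j)$ for all $j$; the minimum then either hands back a specific $x_i$ that works, or certifies that no $x_i$ does, from which an inductive argument forces $x\in A$. Your construction instead indexes by geometric scales $c_n=c_0(1+\var')^{-n}$ and records which scales lie below $d(x,A)$, so $B$ is always a tail $\{-2^{-n}:n\geq n^*\}\cup\{0\}$; this makes the structure of $B$ and the case analysis cleaner, at the price of a separate $\Sigma^0_1$-search for an $a_j$ once $n^*$ is known. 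Both approaches face the same delicate point, namely producing a Cauchy name for $y$ without ever deciding whether $\min(B)=0$; the paper spells out an explicit recursive procedure with full Cauchy estimates, whereas you give a continuity-at-$0$ sketch. Your sketch is sound, but note that the scales $c_s$ shrink like $(1+\var')^{-s}$ rather than $2^{-s}$, so to emit $y[s]$ you must inspect the $\rho_{\omega+1}$-name of $\min(B)$ out to an index $N_s$ with $c_{N_s}<2^{-s-3}$ (say), not merely to index $s$; with that adjustment the dovetailing goes through exactly as in the paper's treatment of the analogous issue in Theorem~\ref{totalapprox}. One cosmetic point: make explicit that $\var'$ is chosen rational (or computable), so that the $c_n$ are rational and the comparison $c_n>d(x,A)$ is genuinely $\Sigma^0_1$.
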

\begin{proof}
By Proposition \ref{sortomega} it suffices to show that $\ProjvarC^+_{\IR^n}
\leqW \min^-_{\omega+1}$ and by Fact \ref{approxkridc}.1 we can assume that
$\var$ is computable. Given $(x,\overline{\{x_n\}}_n) \in \dom
(\ProjvarC^+_{\IR^n})$ let
$$
A:=\set{-2^{-i}}{(\forall j) \: d(x,x_i) \leq (1+\var)\:d(x,x_j)} \cup \{0\}.
$$
This is a $\Pi^0_1$-condition, hence $A$ is computable from
$(x,\overline{\{x_n\}}_n)$ as a nonempty member of $\AA_-(\omega+1)$.

Let now $z:=\min^-_{\omega+1}(A)$ and notice that:
\begin{enumerate}
\item if $z=-2^{-i}$ then by the definition of $A$ we have $x_i \in
    \ProjvarC^+_{\IR^n} (x,\overline{\{x_n\}}_n)$;
\item if $z=0$ then $(\forall i) \: (\exists j) \: d(x,x_j) <
    \frac{d(x,x_i)}{1+\var}$ and inductively we can prove that for every
    $k$ there exists $n$ such that $d(x,x_n) < \frac{d(x,x_0)}{(1+\var)^k}$
    which implies $x \in \overline{\{x_n\}}_n$; hence
    $\ProjvarC^+_{\IR^n}(x,\overline{\{x_n\}}_n)= \{x\}$.
\end{enumerate}

We need to show that from $z$ and the original input
$(x,\overline{\{x_n\}}_n)$ we can compute an effective Cauchy sequence
$(y[s])_s$ converging to a point $y \in \ProjvarC^+_{\IR^n}
(x,\overline{\{x_n\}}_n)$. To compute $y[s]$ set $j_0=0$ and start a
recursive procedure which will stop after finitely many steps. Given $j_k$
use (the name of) $z$ to check whether there exists $i \leq j_k$ such that
$z= -2^{-i}$; in this case we stop the recursion. If instead $z \neq -2^{-i}$
for every $i \leq j_k$ it follows that $-2^{-j_k}\notin A$ and hence there
exists $j_{k+1}$ such that $d(x,x_{j_{k+1}}) < \frac{d(x,x_{j_k})}{1+\var}$.
Since this is a $\Sigma^0_1$ property, we can search for such a $j_{k+1}$
until we find one. The recursion will stop when either we find $i \leq j_k$
such that $z= -2^{-i}$ or we see that $d(x,x_{j_k}) <
\frac{2^{-s-3}}{1+\var}$ (if the first alternative never occurs, such a $k$
exists since $\lim_{k \to \infty} d(x,x_{j_k}) =0$ because $d(x,x_{j_k}) \leq
\frac{d(x,x_0)}{(1+\var)^k}$). In the first case let $y[s] := x_i[s+3]$,
while in the second case let $y[s] := x_{j_k}[s+3]$.

It is clear from the construction that if $z= -2^{-i}$ and $y[s] = x_i[s+3]$
we have $y[s'] = x_i[s'+3]$ for every $s' \geq s$. This implies that if for
some $s$ we have $y[s] = x_i[s+3]$ with $z= -2^{-i}$ then the sequence
$(y[s])_s$ converges to $x_i$, which belongs to $\ProjvarC^+_{\IR^n}
(x,\overline{\{x_n\}}_n)$ by (1) above. If instead for every $s$ the first
possibility never occurs it means that $z=0$, so that by (2)
$\ProjvarC^+_{\IR^n}(x,\overline{\{x_n\}}_n)= \{x\}$, and indeed $(y[s])_s$
converges to $x$.

However, the convergence of $(y[s])_s$ does not suffice, and we need to check
that we actually defined an effective Cauchy sequence. For this it suffices
to show that $d(y[s], y[s+1]) < 2^{-s-1}$ for every $s$. This is obvious if
$z= -2^{-i}$, $y[s] = x_i[s+3]$ and $y[s+1] = x_i[s+4]$. Now assume that
neither $y[s]$ nor $y[s+1]$ have been defined using $z= -2^{-i}$. In other
words, $y[s]= x_{j_k}[s+3]$ and $y[s+1]= x_{j_h}[s+4]$ where $d(x,x_{j_k}) <
\frac{2^{-s-3}}{1+\var}$ and $d(x,x_{j_h}) < \frac{2^{-s-4}}{1+\var}$. Then
\begin{align*}
 d(y[s], y[s+1]) & \leq d(x_{j_k}[s+3], x_{j_k}) + d(x_{j_k}, x) + d(x, x_{j_h}) + d(x_{j_h}, x_{j_h}[s+4]) \\
  & < 2^{-s-3} + \frac{2^{-s-3}}{1+\var} + \frac{2^{-s-4}}{1+\var} + 2^{-s-4} < 2^{-s-1}.
\end{align*}
The last possibility (by the observation above) is that $y[s]= x_{j_k}[s+3]$
with $d(x,x_{j_k}) < \frac{2^{-s-3}}{1+\var}$ and $y[s+1] = x_i[s+4]$ with
$z= -2^{-i}$. In this case notice that, since $-2^{-i} \in A$, we have
$d(x,x_i) \leq (1+\var)\:d(x,x_{j_k}) < 2^{-s-3}$. Then
\begin{align*}
 d(y[s], y[s+1]) & \leq d(x_{j_k}[s+3], x_{j_k}) + d(x_{j_k}, x) + d(x, x_i) + d(x_i, x_i[s+4]) \\
  & < 2^{-s-3} + \frac{2^{-s-3}}{1+\var} + 2^{-s-3} + 2^{-s-4} < 2^{-s-1}.\qedhere
\end{align*}
\end{proof}

\begin{theorem}\label{omega1var+}
For every $\var>0$ and $n \geq 1$, $\Sort \leqsW \ProjvarC^+_{\IR^n}$.
\end{theorem}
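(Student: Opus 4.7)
By Proposition \ref{sortomega} it suffices to prove $\min^-_{\omega+1} \leqsW \ProjvarC^+_{\IR^n}$; using only the first coordinate reduces $n\geq 1$ to $n=1$, and by Fact \ref{approxkridc}(1) I may assume $\var$ is rational so that it can be manipulated computably.

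The plan is to set $x:=0\in\IR$ and construct $B\in\AA_+(\IR)$ so that the $\var$-projection of $0$ onto $B$ uniquely encodes $\min(A)$. Fix a rational $c$ with $1+\var<c$ and let $r_n:=c^{-n}$, so that $1=r_0>r_1>r_2>\cdots$ decreases geometrically to $0$. Given a $\psi^-_{\omega+1}$-name of $A\neq\eps$, at stage $s$ I let $E_s\subseteq\IN$ be the decidable set of indices $n$ such that the first $s$ basic balls of the name cover $-2^{-n}$; since $A\neq\eps$, the set $\IN\setminus E_s$ is non-empty, so $n^*(s):=\min(\IN\setminus E_s)$ is well defined (if some $-2^{-n_0}\in A$ then $n_0\notin E_s$ always, while if $A=\{0\}$ then no enumerated ball contains $0$, hence each such ball covers only finitely many points of $\omega+1$ and $\IN\setminus E_s$ is cofinite). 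Enumerate $r_{n^*(s)}$ as the $s$-th term of a dense sequence in $B$, and let $B$ be the closure of the enumerated sequence; by Remark \ref{rem:psi+} this gives a computable $\psi^+_\IR$-name of a nonempty closed set.

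The verification splits into two cases. If $\min(A)=-2^{-n_0}$, then $n_0\notin E_s$ for every $s$, so $n^*(s)\leq n_0$ throughout; moreover every $n<n_0$ satisfies $-2^{-n}\notin A$ and is eventually excluded, so $n^*(s)$ stabilises at $n_0$. Hence $B$ is a finite subset of $\{r_n:n\leq n_0\}$ containing $r_{n_0}$, and since $r_n/r_{n_0}=c^{n_0-n}\geq c>1+\var$ for each $n<n_0$, we obtain $\ProjvarC^+_\IR(0,B)=\{r_{n_0}\}$. If instead $\min(A)=0$, every $n$ is eventually excluded, so $n^*(s)\to\infty$, the enumerated points converge to $0$, whence $0\in B$, $d(0,B)=0$, and $\ProjvarC^+_\IR(0,B)=\{0\}$.

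Thus in both cases the output $y$ of $\ProjvarC^+_\IR$ is a singleton in the compact set $\{0\}\cup\{r_n:n\in\IN\}$, on which the map $r_n\mapsto -2^{-n}$, $0\mapsto 0$ is a computable homeomorphism onto $\omega+1$. So from a Cauchy name of $y$ alone, keeping the reduction strong, I first compute a Cauchy name of the corresponding element of $\omega+1$ and then apply the Cauchy-to-$\rho_{\omega+1}$ conversion described immediately before Proposition \ref{sortomega} to output a $\rho_{\omega+1}$-name of $\min(A)$. The main obstacle is organising the two-case behaviour of $n^*(s)$ cleanly and checking that the geometric spacing $c>1+\var$ really does force a unique $\var$-projection; both boil down to the elementary estimate used above.
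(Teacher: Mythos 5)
Your proof is correct and follows essentially the same route as the paper's: construct from the $\psi^-_{\omega+1}$-name a positively-presented set geometrically spaced by a ratio exceeding $1+\var$, indexed by the current least uncovered candidate, so that the $\var$-projection onto it is a singleton encoding $\min(A)$, and then decode by the discreteness of that set. The paper uses $-b^{-k-1}$ with $b\in\IN$, $1+\var<b$, where you use $c^{-n}$ with rational $c$, and your appeal to Fact \ref{approxkridc}(1) is unnecessary here since $c$ is a fixed constant of the reduction rather than something computed from $\var$, but these are cosmetic differences.
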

\begin{proof}
By Proposition \ref{sortomega} it suffices to show that $\min^-_{\omega+1}
\leqsW \ProjvarC^+_{\IR^n}$.

As usual, it suffices to show the reduction for $n=1$. Fix $b \in \IN$ such
that $1+\var < b$ and notice that $b \geq 2$. Given $A:= (\omega+1) \setminus
\bigcup_{i\in\IN} B_i$ closed and nonempty in $\omega+1$, with
$B_0,B_1,B_2,\dots$ rational open balls in $\omega+1$, we compute a sequence
$(x_n)_n$ in $\IR$ by setting $x_n:=-b^{-k-1}$ for the least $k$ such that
$-2^{-k} \notin \bigcup_{i\leq n}B_i$ if such a $k$ exists, and otherwise
setting $x_n:=0$.

If $\min(A)=0$, then $-2^{-k}\notin A$ for every $k\in\IN$, which implies
$0\in\overline{\{x_n\}}_n$. Hence $\ProjvarC^+_\IR(0,\overline{\{x_n\}}_n)
=\{0\}$.

If instead $\min(A)=-2^{-k}$ then $k \in \IN$ is the least natural number
such that $-2^{-k}\in A$ and $\overline{\{x_n\}}_n = \{x_n\}_n$ is a discrete
subset of the closed interval $[-\frac 1b,-b^{-k-1}]$. In fact, for all $n$,
$x_n=-b^{-i-1}$ for some $i \leq k$ and, for $n$ sufficiently large,
$x_n=-b^{-k-1}$. This implies that $d(0, \overline{\{x_n\}}_n) = b^{-k-1}$.
Moreover, if $x_n \neq -b^{-k-1}$ then $x_n = -b^{-i-1}$ for some $i<k$ and
hence
$$
(1+ \var)\: d(0,\overline{\{x_n\}}_n) < b\cdot d(0,\overline{\{x_n\}}_n) =
b^{-k} \leq b^{-i-1} = d(0,x_n)
$$
and thus $x_n \notin \ProjvarC^+_\IR(0,\overline{\{x_n\}})$. We thus showed
that $\ProjvarC^+_\IR(0,\overline{\{x_n\}}_n) = \{-b^{-k}\}$.

We have proved that $\ProjvarC^+_\IR(0,\overline{\{x_n\}}_n)$ is a singleton
and we now show that its unique element $y$ can be used to compute
$\min^-_{\omega+1}(A)$. Given then such $y \in \IR$, we produce the
$\rho_{\omega+1}$-name $q$ of $\min^-_{\omega+1}(A)$ in the following way.
For each $s$ we check whether $y = -b^{-s-1}$ or not. Notice that this test
is decidable because $y$ belongs to $\set{-b^{-n-1}}{n\in\IN} \cup \{0\}
\sbsq \IR$ and $-b^{-s-1}$ is not an accumulation point of this set. If the
answer is positive, we put $q(s)=1$, otherwise $q(s)=0$.
\end{proof}

\begin{corollary}\label{approxequivsort}
For every $\var>0$ and $n \geq 1$, $\ProjvarC^+_{\IR^n} \equivW \Sort$.
\end{corollary}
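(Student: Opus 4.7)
The plan is essentially trivial: this corollary is a direct packaging of the two preceding theorems. Theorem \ref{var+omega1} establishes the reduction $\ProjvarC^+_{\IR^n} \leqW \Sort$, while Theorem \ref{omega1var+} establishes the reverse reduction $\Sort \leqsW \ProjvarC^+_{\IR^n}$, which in particular yields $\Sort \leqW \ProjvarC^+_{\IR^n}$. Combining the two inequalities gives $\ProjvarC^+_{\IR^n} \equivW \Sort$, which is exactly what is claimed. No further computation or construction is required.

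The only minor subtlety worth flagging is that the two theorems deliver reductions of different strengths (one is only $\leqW$, the other is $\leqsW$), so the resulting equivalence is at the level of ordinary Weihrauch reducibility $\equivW$ and not $\equivsW$ --- which matches the statement of the corollary. There is no real obstacle here; the work has already been done in the two preceding theorems.
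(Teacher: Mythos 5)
Your proof is correct and takes exactly the same approach as the paper, which also derives the corollary directly from Theorem \ref{var+omega1} and Theorem \ref{omega1var+}. Your remark about the strengths of the two reductions (one $\leqW$, one $\leqsW$, giving $\equivW$) is accurate and a sensible observation.
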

\begin{proof}
By Theorem \ref{var+omega1} and Theorem \ref{omega1var+}.
\end{proof}

\begin{corollary}
\label{corr:incomparable}
For every $\var>0$ and $n \geq 1$, $\ProjvarC^+_{\IR^n} \nW \ProjvarC^-_{\IR^n}$.
\end{corollary}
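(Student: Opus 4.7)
The plan is to observe that this incomparability follows directly by combining the three characterizations already established in the paper. By Corollary \ref{approxequivsort} we have $\ProjvarC^+_{\IR^n} \equivW \Sort$, and by Theorem \ref{approxequivCR} we have $\ProjvarC^-_{\IR^n} \equivsW \C_\IR$. So the statement $\ProjvarC^+_{\IR^n} \nW \ProjvarC^-_{\IR^n}$ is equivalent to $\Sort \nW \C_\IR$.

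The latter is precisely the content of Proposition \ref{prop:sortcr}, where it was shown that $\Sort$ is non-uniformly computable but not low (since $\LPO * \Sort$ decides a $\Sigma^0_2$-complete set), while $\C_\IR$ is low (by \cite[Theorem 8.7]{BdBP}) but fails to be non-uniformly computable (since $\WKL \leqW \C_\IR$ and there are computable infinite binary trees without computable paths). Both lowness and non-uniform computability are preserved downwards under $\leqW$, so the incomparability transfers.

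Thus no step of the argument requires new work: the proof is simply an invocation of the three previous results in sequence. The only thing to be careful about is that Corollary \ref{approxequivsort} is stated only up to $\equivW$ (not strong equivalence), but since the conclusion concerns $\leqW$-incomparability, ordinary Weihrauch equivalence on both sides is exactly what is needed to transfer the incomparability from the pair $(\Sort, \C_\IR)$ to the pair $(\ProjvarC^+_{\IR^n}, \ProjvarC^-_{\IR^n})$.
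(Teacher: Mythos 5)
Your proof is correct and is essentially the paper's own argument: the paper proves the corollary by citing exactly the same three results (Proposition \ref{prop:sortcr}, Theorem \ref{approxequivCR}, and Corollary \ref{approxequivsort}). Your additional remark that ordinary Weihrauch equivalence suffices to transfer incomparability is accurate and a sensible sanity check.
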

\begin{proof}
By Proposition \ref{prop:sortcr}, Theorem \ref{approxequivCR} and Corollary \ref{approxequivsort}.
\end{proof}


\subsection{Total approximated projection operators}

Our classification of projection operators ends finally with a computable
version of projection, that can be therefore used in concrete applications.

\begin{theorem}\label{totalapprox}
For every $\var>0$ and $n \geq 1$, $\ProjvarC_{\IR^n}$ is computable.
\end{theorem}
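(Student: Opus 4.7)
The key observation is that with \emph{total} information on $A$ the distance $d:=d(x,A)$ is a computable real: positive information yields $d$ from above via $\inf_n d(x,a_n)$ over the dense sequence $(a_n)_n$ from Remark \ref{rem:psi+}, exactly as in the proof of Theorem \ref{wlcupperbound}, while negative information yields $d$ from below by semi-deciding when rational balls around $x$ are swallowed by the enumerated balls disjoint from $A$. Together these give $d\in\IR$ computably.

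I will produce a Cauchy name $(y[s])_s$ of the desired output by a nested rational-ball construction. At each stage $s$, set $r_s:=2^{-s-2}$ and find a rational center $q_s\in\IQ^n$ together with a witness $a_{n_s}\in A$ from the dense sequence such that $d(q_s,a_{n_s})<r_s/4$, $d(x,a_{n_s})<(1+\varepsilon/4)d+2^{-s-3}$, and such that $B_s:=B(q_s,r_s)$ satisfies $B_s\subseteq B_{s-1}$ (i.e., $d(q_s,q_{s-1})+r_s\leq r_{s-1}$). Then $y[s]:=q_s$ is Cauchy with $d(q_s,q_{s+1})<r_s$, its limit $y$ lies in $A$ (since $a_{n_s}\in A\cap B_s$ and $r_s\to 0$ force $y=\lim a_{n_s}\in\overline A=A$), and a telescoping bound using $d(q_s,a_{n_s})<r_s/4$ together with condition on $d(x,a_{n_s})$ gives $d(x,y)\leq(1+\varepsilon/2)d<(1+\varepsilon)d$.

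The stage-$s$ data $(q_s,a_{n_s})$ is found by a dovetailed $\Sigma^0_1$ search: the condition $d(x,a_n)<(1+\varepsilon/4)d+2^{-s-3}$ is $\Sigma^0_1$ because $d$ is a computable real and strict inequality between computable reals is semi-decidable, and the localization of $a_n$ close to $q_{s-1}$ is likewise $\Sigma^0_1$. Such $a_n$ always exists: when $d>0$ the threshold $(1+\varepsilon/4)d$ is strictly above $d=\inf_n d(x,a_n)$ so many dense points satisfy it; when $d=0$ we have $x\in A$ by closedness of $A$, the bound reduces to $2^{-s-3}$, and the density of $(a_n)_n$ near $x$ provides witnesses. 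At stage $s+1$ the search is performed \emph{near} the previous witness $a_{n_s}$, which by density is surrounded by arbitrarily close elements of the dense sequence satisfying the tighter bound.

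The main obstacle is reconciling the nesting $B_{s+1}\subseteq B_s$ with the simultaneous tightening of the radial tolerance $r_s$ and of the additive tolerance $2^{-s-3}$ controlling the distance to $x$. The trick is the $\varepsilon/4$-vs-$\varepsilon/2$ slack together with the requirement that the witness $a_{n_s}$ lie \emph{strictly} inside $B_s$ (at distance less than $r_s/4$ from $q_s$): this leaves room, at the next stage, both to halve the radius and to relocate the center to a slightly better witness obtained by the density of $(a_n)_n$. The edge case $d=0$ is then handled uniformly, since the balls automatically converge to $\{x\}$ along approximating points of $A$.
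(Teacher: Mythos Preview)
Your proof has a genuine gap in the inductive step. The claim that the previous witness $a_{n_s}$ ``by density is surrounded by arbitrarily close elements of the dense sequence satisfying the tighter bound'' is unjustified: density of $(a_n)_n$ in $A$ only guarantees points of $A$ near $a_{n_s}$, with no control whatsoever on their distance to $x$. Concretely, take $n=1$, $x=0$, $A=\{1,\,1.3\}$, and $\varepsilon=1$, so that $d=1$ and $(1+\varepsilon/4)d=1.25$. At stage $0$ your $\Sigma^0_1$ search may legitimately return $a_{n_0}=1.3$ with $q_0=1.3$ (since $1.3<1.25+2^{-3}=1.375$), giving $B_0=(1.05,1.55)$. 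At stage $1$ the nesting $B_1\subseteq B_0$ forces $q_1\in[1.175,1.425]$, so the requirement $d(q_1,a_{n_1})<r_1/4=1/32$ rules out $a_{n_1}=1$; hence $a_{n_1}=1.3$ again. At stage $2$ the bound tightens to $1.25+2^{-5}=1.28125<1.3$, so now $a_{n_2}$ would have to be $1$; but no $q_2$ at distance $<r_2/4=1/64$ from $1$ can satisfy $B_2\subseteq B_1$, and the search diverges. The underlying problem is that the shrinking additive tolerance $2^{-s-3}$ together with the geometric nesting can trap the construction near an isolated point of $A$ that only barely passed the initial test, and nothing in your argument prevents this.

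The paper's argument avoids the difficulty by not nesting at all. At each stage it semi-decides the disjunction ``$(1+\varepsilon)\,d(x,A)<2^{-s-3}$'' versus ``$d(x,A)>0$''. As long as the first alternative is confirmed it outputs $y[s]:=x[s+3]$; the moment the second is confirmed it searches the dense sequence once for a fixed $z$ with $d(x,z)<(1+\varepsilon)\,d(x,A)$ and commits to $y[t]:=z[t+1]$ for all later $t$. This handles $d=0$ and $d>0$ uniformly without ever requiring two distinct witnesses in $A$ to lie close to one another.
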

\begin{proof}
By Fact \ref{approxkridc}.1 we can assume that $\var$ is computable. We give
an algorithm to determine some $y\in \ProjvarC_{\IR^n}(x,A)$ for every
$(x,A)\in\IR^n\times\AA(\IR^n)$ with $A\neq\eps$.

We know already that total information on $A$ allows us to compute $d(x,A)$,
and then $(1+\var)\:d(x,A)$. We construct by induction an approximate
projection point of $x$ onto $A$ as follows.

At stage $s\geq 0$ we check whether
$$
\text{(\emph{i})} \:\:(1+\var)\:d(x,A)<2^{-s-3} \qquad \textrm{or} \qquad \text{(\emph{ii})} \:\:d(x,A)>0.
$$
Notice that at least one of these two conditions holds, and we stop when we
verify one of them. If (\emph{i}) is verified before (\emph{ii}), we let
$y[s]:=x[s+3]$, and move to step $s+1$. If instead (\emph{ii}) is verified
before (\emph{i}), we inspect the dense sequence in $A$ searching for some
$z$ such that $d(x,z)<(1+\var)\:d(x,A)$ (a suitable $z$ always exists in this
case) and then let $y[t]=z[t+1]$ for all $t\geq s$.

We now show that the algorithm works. First we need to check that $(y[i])_i$
is an effective Cauchy sequence converging to some $y$, and to this end it
suffices to check that $d(y[s],y[s+1]) \leq 2^{-s-1}$ for all $s$. This is
trivial if at stage $s$ and as well at stage $s+1$ the condition (\emph{i})
is satisfied first, or alternatively if (\emph{ii}) has been verified at some
stage $t\leq s$. The interesting case is therefore when (\emph{ii}) is
verified for the first time at stage $s+1$. Then
\begin{multline*}
 d(y[s],y[s+1]) = d(x[s+3],z[s+2]) < d(x[s+3],x) +d(x,z) +d(z,z[s+2]) \\
 < 2^{-s-3}+(1+\var)\:d(x,A) +2^{-s-2} < 2^{-s-1}.
\end{multline*}

We then need to check that $y \in \ProjvarC_{\IR^n} (x,A)$. If (\emph{i}) has
always been verified, then $d(1+\var)\:(x,A)=0= d(x,y)$, since $y=x$. If at
stage $s$ (\emph{ii}) is verified, then $y=z$ where $z$ was picked so that
$d(x,z) < (1+\var)\:d(x,A)$
\end{proof}

\section{An application: the Whitney Extension Theorem}\label{sec:Whitney}

Projection points are often used in mathematics. An example is the Whitney
Extension Theorem, originally proved in \cite{Whitney}, and dealing with
differentiable functions in $\IR^n$. This theorem considers a real-valued
continuous function $f$ defined on a closed $A \subseteq \IR^\IN$. Since $A$
is closed, we cannot even attempt to compute the partial derivatives of $f$
at many boundary points of $A$. However we can have also a set of continuous
functions (the pseudo-derivatives of $f$) defined also on $A$ which satisfy
Taylor's formulas and hence behave like the partial derivatives of degree
$\leq k$ of $f$ ($f$ and this set of functions are collectively called a
\emph{jet}). The Whitney Extension Theorem asserts that under these hypotheses
$f$ can be extended to some $g \in C^k(\IR^n)$, so that $g$ and its partial
derivatives extend the elements of the jet.

A classical proof of the Whitney Extension Theorem is contained in \cite[Chapter
VI]{Stein}, and we follow Stein's proof to provide a computable version.
Starting with the closed set $A$, Stein defines a family $\FF$ of cubes
tiling the complement of $A$ and a partition of unity $(\varphi^*_Q)_{Q \in
\FF}$ consisting of smooth functions. For each $Q \in \FF$ let $P_Q$ be a
projection point of the center of $Q$ onto $A$. We then define the $C^k$
extension of $f$ by
$$
g(x):=
\begin{cases}
f(x) & \textrm{if $x\in A$;}\\
\sum_{Q\in\FF}f(P_Q)\varphi^*_Q(x) & \textrm{if $x \notin A$.}
\end{cases}
$$
(Notice that, for a given $A$ and after we fix $\FF$, $(\varphi^*_Q)_{Q \in
\FF}$ and $(P_Q)_{Q \in \FF}$, in fact we obtain a linear operator from the
space of jets to $C^k(\IR^n)$.)

If $A$ is given with total information, variations of $\FF$ and
$(\varphi^*_Q)_{Q \in \FF}$ can be computed. Thus at first sight the only
essentially non-computable step (by Proposition \ref{LLPOProjC} and Theorem
\ref{WKLProjC}) in Stein's proof is the choice of $(P_Q)_{Q \in \FF}$. To
overcome this obstacle $P_Q$ can be replaced with some other point of $A$
which is close enough to $Q$, and \lq\lq close enough\rq\rq\ depends only
from the size of $Q$. This suggests that the multi-valued functions naturally
associated to the Whitney Extension Theorem are actually computable without
resorting to any projections. However there is another, subtler, point that
needs to be taken into account. In fact, in the definition by cases of $g$
given above the case distinction is not computable. Thus we need to provide
an effective way, given $x$ and $A$, to compute $g(x)$ without knowing
whether $x \in A$. Here the projection operators, which are defined over
$\IR^n$, come back into the picture and appear to be essential: when we do
not know positively that $x \notin A$ they are used to compute $g(x)$ in a
way that is compatible with both cases. Only by showing that approximate
projections are indeed sufficient it is possible to find a computable version
of the Whitney Extension Theorem.

Summing up, assuming $A$ is represented with total information and using
Theorem \ref{totalapprox}, we show that the multi-valued function associated
to the Whitney Extension Theorem is computable.
As mentioned in the introduction, full details of this result will be
included in a forthcoming paper (\cite{compWhitney}).

\bibliographystyle{alpha}
\bibliography{projectionsWhitney}{}

\end{document}